\definecolor{rouge}{rgb}{0.7,0.00,0.00}
\definecolor{vert}{rgb}{0.00,0.5,0.00}
\definecolor{bleu}{rgb}{0.00,0.00,0.8}
\newtheorem{theorem}{Theorem}[section]
\newtheorem*{theorem*}{Theorem}
\newtheorem{lemma}[theorem]{Lemma}
\newtheorem{proposition}[theorem]{Proposition}
\newtheorem{condition}{Condition}
\newtheorem{conditionA}{A\kern-0.1mm}
\newtheorem{conditionB}{B\kern-0.1mm}
\theoremstyle{definition}
\newtheorem{example}[theorem]{Example}
\newtheorem{remark}[theorem]{Remark}
\def \eref#1{\hbox{(\ref{#1})}}
\numberwithin{equation}{section}
\def\geq{\geqslant}
\def\leq{\leqslant}
\def\RR{\mathbb{R}}
\def\PP{\mathbb{P}}
\def\EE{\mathbb{E}}
\def\vare{{\varepsilon}}
\def \eref#1{\hbox{(\ref{#1})}}
\def\EE{\mathbb{ E}}
\begin{document}
	
	\title[Asymptotic behavior for multi-scale SDEs driven by L\'evy processes]
	{Asymptotic behavior for multi-scale SDEs with monotonicity coefficients driven by L\'evy processes }
	
	\author{Yinghui Shi}
	\curraddr[Shi, Y.]
	{School of Mathematics and Statistics, Jiangsu Normal University, Xuzhou, 221116, China}
	\email{shiyinghui@jsnu.edu.cn}
	
	\author{Xiaobin Sun}
	\curraddr[Sun, X.]{ School of Mathematics and Statistics/RIMS, Jiangsu Normal University, Xuzhou, 221116, China}
	\email{xbsun@jsnu.edu.cn}
	
	\author{Liqiong Wang}
	\curraddr[Wang, L.]{School of Science, Key Laboratory of Mathematics and Information Networks, Ministry of Education, Beijing University of Posts and Telecommunications, Beijing,
100876, China}
	\email{wlq@jsnu.edu.cn}
	
	\author{Yingchao Xie}
	\curraddr[Xie, Y.]{ School of Mathematics and Statistics/RIMS, Jiangsu Normal University, Xuzhou, 221116, China}
	\email{ycxie@jsnu.edu.cn}

	\begin{abstract}
		In this paper, we study the asymptotic behavior for multi-scale stochastic differential equations driven by L\'evy processes.
		The optimal strong convergence order 1/2 is obtained by studying the regularity estimates for the solution of Poisson equation with polynomial growth coefficients, and the optimal weak convergence order 1 is got by using the technique of Kolmogorov equation. The main contribution is that the obtained results can be applied to a class of multi-scale stochastic differential equations with monotonicity coefficients, as well as the driven processes can be the general L\'evy processes, which seems new in the existing literature.
	\end{abstract}

	\date{\today}
	\subjclass[2000]{ Primary 34D08, 34D25; Secondary 60H20}
	\keywords{Multi-scale SDEs; Averaging principle; Monotonicity coefficients; L\'evy process; Convergence order; Poisson equation }

	\maketitle
	
	\tableofcontents
	\section{Introduction}
	\subsection{Background}
	In this paper, we consider the following multi-scale stochastic system driven by L\'evy processes:
	\begin{equation}
		\left\{\begin{array}{l} \label{Equation}
			\displaystyle
			dX^{\varepsilon}_t=b(X^{\varepsilon}_{t},Y^{\varepsilon}_{t})dt+\sigma( X^{\varepsilon}_t,Y^{\varepsilon}_t)dW^{1}_t\!+\!\int_{\mathcal{Z}_1} h_{1} (X_{t-}^{\varepsilon} ,z)\tilde{N}^{1} (dz,dt),\\
			dY^{\varepsilon}_{t}\!=\!\frac{1}{\varepsilon}f(X^{\varepsilon}_{t},Y^{\varepsilon}_{t})dt\!+\!\frac{1}{\sqrt{\varepsilon}}g(X^{\varepsilon}_{t},Y^{\varepsilon}_{t})d W^{2}_t\!+\!\int_{\mathcal{Z}_2} h_{2} (X_{t-}^{\varepsilon} ,Y_{t-}^{\varepsilon}, z)\tilde{N} ^{2,\varepsilon} (dz,dt)\end{array}\right.
	\end{equation}
	with initial values $X^{\varepsilon}_0=x\in \RR^{n}$ and $Y^{\varepsilon}_0=y\in\RR^{m}$, where $\vare$ is a small and positive parameter describing the ratio of the time scale between the slow component $X^{\vare}_t$ and fast component $Y^{\vare}_t$.  $W^{1}$ and $W^{2}$ are independent $d_1$ and $d_2$ dimensional standard Wiener processes, $\tilde{N}^{1}$ and $\tilde{N}^{2,\vare}$ are compound Poisson random measures with L\'evy measures $\nu_1$ and $\frac{1}{\varepsilon}\nu_2$ respectively.
	
	\vspace{0.1cm}
	The averaging principle describes the asymptotic behavior of the slow component as $\vare$ goes to $0$, i.e., $X^{\vare}$ weakly converges to the solution of a limiting equation which usually satisfies the following form:
	\begin{equation}
		d\bar{X_{t} }=\bar{b}(\bar{X}_{t})dt+\bar \sigma (\bar{X}_{t})d W_{t}+\int_{\mathcal{Z}_1 }h_{1}(\bar{X }_{t-},z)\tilde{N}^{1}(dz,dt)\label{AVE}
	\end{equation}
	with initial value  $\bar{X}_0=x$, and $\bar{b}$ and $\bar{\sigma}$ are the corresponding averaged coefficients, thus \eref{AVE} is called the averaged equation. Since the pioneer works for the averaging principle for multi-scale stochastic differential equations (SDEs) by Khasminskii \cite{K1}, and multi-scale stochastic partial differential equations (SPDEs) by Cerrai and Freidlin \cite{CF2009}, there have been various results on the averaging principle for multi-scale stochastic systems, see e.g. \cite{C2011,GP1,HL2020,L2010,LRSX1,V0,WY2022}.
	
	\vspace{0.1cm}
	In addition to the convergence of $X^{\vare}$, one is also interested in the rate of convergence, since the rate can be used to construct the efficient numerical schemes, see e.g. \cite{B2020}. Moreover, the optimal rate of convergence is also known to be very important for diffusion approximation or homogenization problems and normal derivation or central limit type theorems, see e.g. \cite{HLLS2021,PV1,PV2,PS,RX2021,WR}. In the past several decades, there have been many results in studying the convergence rate in the averaging principle. A common strategy used to study is to apply the technique of classical Khasminskii's time discretization, see e.g. \cite{B2012, F2022,L2010},
	and the method of asymptotic expansion of solutions of Kolmogorov equations in the parameter $\vare$ is  used to study the  weak convergence rate, see e.g. \cite{B2012,DSXZ, FWLL,KY2004}.
	Recently, the technique of Poisson equation is widely used to study the optimal strong and weak convergence rates, see e.g. \cite{B2020,RSX2021}.
	
	\vspace{0.1cm}
	The mentioned references all considered stochastic systems driven by Gaussian noise, whose solutions have continuous paths. However, solutions with discontinuous paths appears naturally in many applications. The averaging principle for stochastic system driven by jump noise has been researched by many scholars, see e.g. \cite{BYY,GLSX,G2007,PXW,SWXY2022, SG2022,XML1,XDX}. However, only few studies have shown the optimal convergence rate in the strong and weak sense. For example, Liu \cite{L2012} obtained the optimal convergence order $1/2$ in the strong sense for a class of  jump-diffusion processes. Sun et.al. \cite{SXX} obtained the optimal strong convergence order $1-1/\alpha$ and weak convergence order $1$ for a class of SDEs driven by $\alpha$-stable processes, where $\alpha\in(1,2)$. Sun and Xie \cite{SX2023} obtained the optimal strong convergence order $1-1/\alpha$ and weak convergence order $1-r$ (for any $r\in(0,1)$) for a class of SPDEs driven by $\alpha$-stable processes, where $\alpha\in(1,2)$.
	
	\vspace{0.1cm}
	The main purpose of this paper focuses on studying the optimal strong and weak convergence rates for stochastic system \eref{Equation}. More precisely, for any initial value $(x,y)\in \RR^n\times\RR^m$, $T>0$, $p>1$ and small enough $\vare>0$, under some proper conditions on the coefficients and $\sigma(x,y)\equiv\sigma(x)$, we have
	\begin{eqnarray}
		\mathbb{E} \left(\sup_{0\leq t\leq T}|X_{t}^{\vare}-\bar{X}_{t}|^{p}\right)\leq C\vare^{p/2},\label{Intro(1)}
	\end{eqnarray}
	where $C$ is a constant depending on $T, |x|, |y|, p$ and $\bar{X}$ is the solution of the corresponding averaged equation  (see \eref{AR1} below).
	Furthermore, for some proper function $\phi$, we obtain
	\begin{eqnarray}
		\sup_{0\leq t\leq T}|\mathbb{E}\phi(X^{\vare}_t)-\EE\phi(\bar{X}_t)|\leq C\vare,\label{Intro(2)}
	\end{eqnarray}
	where $\bar{X}$ is the solution of another  averaged equation  (see \eref{AR2} below). This means that the strong and weak convergence orders are $1/2$ and $1$ respectively, which are the optimal convergence orders.
	
	\vspace{0.1cm}
	It is worth  pointing out an independent interest of the paper is that the coefficient $b(x,y)$ satisfies the monotonicity condition and polynomial growth with respect to $x$ and $y$ respectively, such as the form $b(x,y)=-|x|^2 x +x+|y|^2 y$ (see Example 2.7 below for detailed discussion). As far as we know, there seems to be few results on this topic, even in the case of the Wiener noises. Compare with \cite{LRSX1}, where Liu. et. al. have proved the strong averaging principle holds without the convergence order for a class of SDEs with locally Lipschitz coefficients, we here achieve the optimal strong convergence order 1/2. Compare with \cite{CDGOS2022}, where Crisan et. al. study the optimal weak convergence order $1$ in the sense of \emph{uniform in time}  for multi-scale SDEs with locally-Lipschitz coefficients, we here obtain the optimal strong convergence order $1/2$ and the driving noises can be the general L\'{e}vy processes. To the best of our knowledge, it seems the first result about the optimal strong convergence rate for SDEs with monotonicity coefficients.
	
	\subsection{Main techniques for the proofs} For reading convenience, we are in a position to show the main techniques of the proofs.
	
	\vspace{0.1cm}
	
	\emph{Techniques for strong convergence:} Firstly, using the monotonicity condition, one can get
	\begin{eqnarray*}
		\mathbb{E}\left(\sup_{0\leq t\leq T}|X^{\varepsilon}_t-\bar{X}_t|^p\right)
		\leq\!\!C_{p,T}\mathbb{E}\left[\sup_{0\leq t\leq T}\left|\int^t_0\langle X_{s}^{\varepsilon}-\bar{X}_{s}, b(X^{\varepsilon}_{s},Y^{\varepsilon}_{s})-\bar{b}(X^{\varepsilon}_{s})\rangle ds\right|^{p/2}\right].
	\end{eqnarray*}
	In order to prove \eref{Intro(1)}, the key step is how to deal with the term $b(X^{\vare}_s,Y^{\vare}_s)-\bar{b}(X^{\vare}_s)$. To do this, we will use the technique of Poisson equation. Roughly speaking, $b(X^{\vare}_s,Y^{\vare}_s)-\bar{b}(X^{\vare}_s)$ can be replaced by $-\mathscr{L}_2(X^{\vare}_s)\Phi(X^{\vare}_s,\cdot)(Y^{\vare}_s)$, i.e., considering the following Poisson equation:
	$$
	-\mathscr{L}_{2}(x)\Phi(x,\cdot)(y)= b(x,y)-\bar{b}(x),
	$$
	where $\mathscr{L}_{2}(x)$ is the generator of the frozen equation  (\ref{Frozen equation}) for fixed $x\in\RR^n$. Then applying It\^{o}'s formula on $\langle\Phi(X^{\vare}_t,Y^{\vare}_t), X_{t}^{\varepsilon}-\bar{X}_{t}\rangle $, one would obtain the new term
	$$
	\int^t_0\langle X_{s}^{\varepsilon}-\bar{X}_{s}, \mathscr{L}_2(X^{\vare}_s)\Phi(X^{\vare}_s,\cdot)(Y^{\vare}_s)\rangle ds
	$$
	has an expression in terms of the solution $\Phi$ to the Poisson equation (see \eref{S5.8} below), thus the remaining works are devoted to studying the regularity estimates of the solution $\Phi$. However, for the purpose of covering some cases of the polynomial growth coefficients,  this paper is devoted to studying the regularity estimates for the solution of the corresponding Poisson equation with polynomial growth coefficients.
	
	\vspace{0.1cm}
	\emph{Techniques for weak convergence: } We shall use the technique based on a combination of Kolmogorov equation and Poisson equation to prove \eref{Intro(2)}. Roughly speaking, we first introduce the Kolmogorov equation:
	\begin{equation*}\left\{\begin{array}{l}
			\displaystyle
			\partial_t u(t,x)=\bar{\mathscr{L}}_1 u(t,x),\quad t\geq 0, \\
			u(0, x)=\phi(x),
		\end{array}\right.
	\end{equation*}
	where $\bar{\mathscr{L}}_1$ is the generator of the transition semigroup of the averaged equation \eref{AR2}. For fixed $t>0$, denote
	$$\tilde{u}^t(s,x):=u(t-s,x),\quad s\in [0,t].$$
	Then we observe that
	$$\tilde{u}^t(t, X^{\vare}_t)=\phi(X^{\vare}_t),\quad \tilde{u}^t(0, x)=\EE\phi(\bar{X}^{x}_t).$$
	Therefore, using It\^{o}'s formula on $\tilde{u}^t(t, X^{\vare}_t)$ and taking expectation on both sides (see \eref{F5.11} below), one would  get
	\begin{eqnarray}
		\EE\phi(X^{\vare}_{t})-\EE\phi(\bar{X}_{t})=\EE\int^t_0 \left[F^t(s,x,y)-\bar{F}^t(s,x)\right] ds, \label{Intro(3)}
	\end{eqnarray}
	where
	\begin{eqnarray*}
		F^t(s,x,y):=\langle b(x,y), \partial_x \tilde{u}^t(s, x)\rangle+\frac{1}{2}\text{Tr}\big[\sigma\sigma^{*}(x,y)\partial_x^{2}\tilde{u}^t(s,x)\big]
	\end{eqnarray*}
	and $\bar{F}^t(s,x)$ is the corresponding averaged coefficient. In order to prove \eref{Intro(3)}, we consider another Poisson equation:
	$$
	-\mathscr{L}_{2}(x)\tilde{\Phi}^t(s,x,\cdot)(y)=F^t(s,x,y)-\bar{F}^t(s,x).
	$$
	The remaining proof returns to the previous procedure for applying the technique of Poisson equation.
	
	\subsection{Organization} The rest of the paper is organized as follows. In Section 2, we give some notations and under some suitable assumptions, we formulate our main results and give several concrete examples to illustrate the applicability of our main results. Section 3 is devoted to giving some a priori estimates of the solution $(X^{\vare}_t, Y^{\vare}_t)$ and studying the frozen and averaged equations. We study the regularity estimates of the solution of Poisson equation in Section 4. The details proofs of the strong convergence and weak convergence are given in Sections 5.1 and 5.2 respectively. In last section is the appendix, where we give detailed proofs about the differentiability of solution to the frozen equation, and the well-posedness of the Kolmogorov equation for the averaged equation.
	
	\vspace{0.1cm}
	
	Throughout this paper, $C$, $C_{T}$ and $C_{p,T}$ stand for constants whose value may change from line to line, and $C_{T}$ and $C_{p,T}$ is used to emphasize  that the constant depend on $T$ and $p,T$ respectively.

	\section{Main results}\label{sec.prelim}
	
	This section is divided into three subsections, we first introduce some notations and assumptions in subsection 2.1. The main results are presented in the subsection 2.2. Finally, we show some examples to illustrate our main results.
	
	\subsection{Notations and assumptions}
	Denote by $|\cdot|$ and  $\langle\cdot, \cdot\rangle$ the Euclidean vector norm and the usual Euclidean inner product, respectively. Let $\|\cdot\|$ be the matrix norm or the operator norm if there is no confusion possible.
	
	For a vector-valued or matrix-valued function $\varphi(x)$ defined on $\RR^n$ or $\varphi(x,y)$ defined on $\RR^n\times\RR^m$, for any $i,j\in \mathbb{N}$, we use $D^i\varphi(x)$ to denote its $i$-th order derivatives of $\varphi$, and use $\partial^j_y \partial^i_x\varphi(x,y)$ to denote its $i$-th and $j$-th  partial derivative of $\varphi(x,y)$ with respect to (w.r.t.)  $x$ and $y$, respectively. We  call $\varphi(x,y)$ is a function of polynomial growth if there exist $C>0$ and $k\geq 1$ such that $|\varphi(x,y)|\leq C(1+|x|^k+|y|^k), \forall x\in \RR^n, y\in \RR^m $.
	
	For any $k,k_1,k_2\in \mathbb{N}_{+}$ ,
	\begin{itemize}
		\item{ $C^k(\RR^n):=\big\{\varphi: \RR^n\to \RR: \mbox{ for } 0\le i\le k,  D^i\varphi(x) \mbox{ are continuous}\big\}$ ;}
		
		\item{ $C^k_b(\RR^n):=\big\{\varphi\in C^k(\RR^n): \mbox{ for } 1\le i\le k,D^i\varphi(x) \mbox{ are bounded}\big \}$;}
		
		\item{ $C^k_p(\RR^n):=\big\{\varphi\in C^k(\RR^n): \mbox{ for } 0\le i\le k, D^i\varphi(x) \mbox{ are polynomial growth}\big \}$;}
		
		\item{$C^{k_1,k_2}(\RR^n\times\RR^m):=\big\{\varphi: \RR^n\times\RR^m\to \RR: \mbox{ for } 0\le i\le k_1,0\le j\le k_2, \\ 0\le i+j\le k_1\vee k_2,\quad\partial^j_y \partial^i_x\varphi(x,y) \mbox{ are joint continuous}\big\}$;}
		
		\item{$C^{k_1,k_2}_p(\RR^n\times\RR^m):=\big\{\varphi\in C^{k_1,k_2}(\RR^n\times\RR^m): \mbox{ for } 0\le i\le k_1,0\le j\le k_2, \\ 0\le i+j\le k_1\vee k_2,
			\quad \partial^j_y \partial^i_x\varphi(x,y) \mbox{ are polynomial growth}\big \}$.}

	\end{itemize}
	Similarly, for a function $\varphi:\RR^n\times \RR^m\rightarrow \RR^n$, we say $\varphi\in C^{k_1,k_2}(\RR^n\times \RR^m, \RR^n)$, if all the components of $\varphi_i\in C^{k_1,k_2}(\RR^n\times \RR^m)$, $i=1,2,\ldots,n$. Other notations can be interpreted similarly.
	
	\vspace{0.1cm}
	Let $W^{1}$ and $W^{2}$ be $d_1$ and $d_2$-dimensional standard Wiener process on a complete probability space $(\Omega,\mathcal{F},\PP)$ with filtration $\{\mathcal{F}_t\}_{t\geq 0}$, respectively. Let $D_{1}$ and $D_{2}$ be two countable subsets of $\RR_{+}$. $p^1_t$ and $p^{2,\varepsilon}_t$ are two stationary $\mathscr{F}_{t}$-adapted Poisson point processes on measurable spaces $(\mathcal{Z}_1,\mathscr{B}(\mathcal{Z}_1))$ and $(\mathcal{Z}_2,\mathscr{B}(\mathcal{Z}_2))$ with characteristic measures $\nu_1$ and $\frac{1}{\varepsilon}\nu_2$ respectively, where $\int_{\mathcal{Z}_i}1\wedge |z|^2 \nu_i(dz)<\infty$, $i=1,2$. Define for any $t>0$ and $A_i\in\mathscr{B}(\mathcal{Z}_i)$, $i=1,2$,
	$$
	N^1([0,t], A_1):=\sum_{s\in D_{1},s\leq t}1_{A_1}(p^1_s)\quad \text{and}\quad N^{2,\vare}([0,t], A_2):=\sum_{s\in D_{2},s\leq t}1_{A_2}(p^{2,\varepsilon}_s),
	$$
	which are two Poisson random measures with corresponding compensated  martingale measures
	$$\tilde{N}^1(ds, du) :=N^1(ds, du)-\nu_1(du)ds$$
	and $$\tilde{N}^{2,\vare}(ds, du):= N^{2,\vare}(ds, du)-\frac{1}{\vare}\nu_2(du)ds.$$
	Note that we always assume that $W^1$, $W^{2}$, $\tilde{N}^1$ and $\tilde{N}^{2,\vare}$ are mutually independent.
	
	\vspace{0.1cm}
	Let the maps $b=b(x,y)$, $\sigma=\sigma(x,y)$, $h_1=h_1(x,z)$, $f=f(x,y)$, $g=g(x,y)$ and $h_2=h_2(x,y,z)$ be given:	
	\begin{eqnarray*}
		&& b:\RR^{n}\times \RR^{m} \longrightarrow \RR^{n}, \quad \sigma:\RR^{n}\times \RR^{m}\longrightarrow \RR^{n\times d_1},\quad h_{1}:\RR^{n}\times \mathcal{Z}_1\longrightarrow \RR^{n},\nonumber\\
		&& f:\RR^{n}\times \RR^{m} \longrightarrow \RR^{m},\quad g: \RR^{n}\times \RR^{m}\longrightarrow \RR^{m\times d_2},\quad h_{2}:\RR^{n}\times \RR^{m}\times \mathcal{Z}_2\longrightarrow \RR^{m}.
	\end{eqnarray*}
	For the coefficients of the slow equation, we suppose that for any $p\geq 2$, $x,x_1,x_2\in \RR^n$ and $y,y_1,y_2\in \RR^m$, there exist positive constants $C$, $C_p$  and $k,q\in [2,\infty)$ such that the following
	conditions hold:
	\begin{conditionA}\label{A1} (Monotonicity)
		\begin{eqnarray}
			\left\langle b(x_{1},y)-b(x_{2},y),x_{1}-x_{2}\right\rangle \leq    C|x_{1}-x_{2}|^{2}.  \label{ConA11}
		\end{eqnarray}
		Moreover,
		\begin{eqnarray}
			&&|b (x,y_1)-b (x,y_2)|\leq    C\left(1+|y_1|^k+|y_2|^k\right)|y_{1}-y_{2}|, \label{ConA112}\\
			&&\|\sigma (x_{1},y_1)-\sigma (x_{2},y_2)\|\leq  C|x_1-x_2|+C\left(1+|y_1|^k+|y_2|^k\right)|y_{1}-y_{2}|, \label{ConA113}\\
			&&\int_{\mathcal{Z}_1}|h_{1}(x_1,z)-h_{1}(x_2,z)|^{p}\nu_1(dz)\le C_p|x_1-x_2|^p. \label{ConA12}
		\end{eqnarray}
	\end{conditionA}
	
	\begin{conditionA} (Coercivity)\label{A2}
		\begin{eqnarray}
			2\left\langle b(x,y),x\right\rangle+\|\sigma(x,y)\|^2 \leq C\left(1+|x|^{2}+|y|^{q}\right). \label{ConA2}
		\end{eqnarray}
	\end{conditionA}
	
	\begin{conditionA}(Growth)\label{A3} $b\in C^{2,3}(\RR^n\times\RR^m,\RR^n)$ and for any $i=0,1,2, j=0,1,2,3$ with $0\leq i+j\leq 3$,
		\begin{eqnarray}
			\|\partial_{x}^{i}\partial_{y}^{j}b(x,y)\| \le C\left(1+|x|^{k}+|y|^{k}\right).\label{ConA31}
		\end{eqnarray}
		Moreover,
		\begin{eqnarray}
			\int_{\mathcal{Z}_1}|h_{1}(x,z)|^{p}\nu_1(dz)\le C_p\left(1+|x|^{p}\right). \label{ConA32}
		\end{eqnarray}
	\end{conditionA}
	
	For the coefficients of the fast equation, we suppose that for any $p\geq 2$, there exist positive constants $C$, $C_p$, $k$, $\ell>8$, $\beta,\lambda$, $L_{h_2}\in [0,\beta)$ and $\zeta_1,\zeta_2\in [0,1)$ such that the following conditions hold for any $x,x_1,x_2\in \RR^n$, $y,y_1,y_2\in\RR^m$:
	\begin{conditionB} (Strong monotonicity) \label{B1}
		\begin{eqnarray}
			&&2\left \langle f(x_1,y_{1})-f(x_2,y_{2}),y_{1}-y_{2}\right\rangle +(\ell-1)\| g(x_1,y_{1})-g(x_2,y_{2})\|^{2} \nonumber  \\
			&&+2^{\ell-3}(\ell-1)\!\int_{\mathcal{Z}_2}\!\!|h_{2}(x_1,y_{1},z)-h_{2}(x_2,y_{2},z)|^{2}\nu_2(dz)\le\!\! -\beta|y_1-y_2|^{2}\!+\!C|x_1-x_2|^2,
			\label{ConB1}
		\end{eqnarray}
		\begin{eqnarray}
			2^{\ell-3}(\ell-1)\int_{\mathcal{Z}_2}|h_{2}(x_1,y_{1},z)-h_{2}(x_2,y_{2},z)|^{\ell}\nu_2(dz)\le L_{h_2} |y_1-y_2|^{\ell}+C|x_1-x_2|^{\ell}.  \label{ConB11}
		\end{eqnarray}
	\end{conditionB}
	
	\begin{conditionB} (Coercivity)\label{B2}
		\begin{eqnarray}
			\left\langle f(x,y),y\right\rangle \leq -\lambda|y|^{2}-\lambda|y|^{q}+C, \label{ConB2}
		\end{eqnarray}
		where $q$ is the one in \eref{ConA2}.
	\end{conditionB}
	
	\begin{conditionB}(Growth)\label{B3} $f\in C^{3,3}(\RR^n\times\RR^m,\RR^m)$, $g\in C^{3,3}(\RR^n\times\RR^m,\RR^m\times \RR^{d_2})$, $h_2(\cdot,\cdot,z)\in C^{3,3}(\RR^n\times\RR^m,\RR^m)$ and  for any $0\leq i,j\leq 3$ with $1\leq i+j\leq 3$,
		\begin{eqnarray}
			&&\|\partial^{j}_y\partial^i_{x}f(x,y)\|+\|\partial^{j}_y\partial^i_{x}g(x,y)\|+\int_{\mathcal{Z}_2}\|\partial^j_{y}\partial^i_{x}h_{2}(x,y,z)\|^{p}\nu_{2}(dz)\le C_p(1+|y|^k).\label{ConB31}
		\end{eqnarray}
		Moreover,
		\begin{eqnarray}
			\quad \|g(x,y)\| \le C(1+|y|^{\zeta_1}), \quad \int_{\mathcal{Z}_2}|h_{2}(x,y,z)|^{p}\nu_{2}(dz)\le C_p(1+| y|^{p \zeta_2}).\label{ConB32}
		\end{eqnarray}
	\end{conditionB}
	
	\begin{remark}
		Here we give some comments on the conditions above.
		\begin{itemize}
			\item{ (\ref{ConA11}) and \eref{ConA112} are monotonicity and  local Lipschitz continuous for the coefficient $b$ \emph{w.r.t.} $x$ and $y$, respectively. For example, $b(x,y)=-|x|^2 x +x+|y|^3 y$. }
			\item{The constant $q\geq 2$ in \eref{ConA2} and \eref{ConB2} is used to ensure the existence and uniqueness of the solution of the stochastic system \eref{Equation} (see details in Lemma \ref{PMY}). }
			\item{ (\ref{ConB1})} ensures that the frozen equation admits a unique invariant measure $\mu^x$, which together with \eref{ConB11} ensure the partial derivative of the solution $Y^{x,y}_t$ of frozen equation with respect to $x$ lives in $L^{\ell}$ norm, i.e., $\sup_{t\geq 0}\EE\|\partial_xY^{x,y}_t\|^{\ell}<\infty$, for some $\ell>8$.
			
			\item{(\ref{ConA31}) and \eref{ConB31} are used to study the regularity estimates  of the solution of Poisson equation. }
			\item{The constants $\zeta_1,\zeta_2\in [0,1)$ in $\eref{ConB32}$ are used to prove the fast component $Y^{\vare}$ has finite moments of any order.}
		\end{itemize}
	\end{remark}
	
	\subsection{Main results}
	Let $\mu^{x}$ be the unique invariant measure for the transition semigroup of the following frozen equation:
	\begin{equation}
		\left\{\begin{array}{l} \label{Frozen equation}
			\displaystyle
			dY_{t}=f(x,Y_{t})dt+g(x,Y_{t})d\tilde{W}_{t}^{2}+\int_{\mathcal{Z}_2 }h_{2}(x,Y_{t-},z)\tilde{N}^{2}(dt,dz),\\
			Y_{0}=y\in\RR^m,\end{array}\right.
	\end{equation}
	where $\tilde{W}^{2}$ is a $d_2$-dimensional standard Wiener process and $\tilde{N}^{2}$ is compensated martingale measure with L\'evy measure $\nu_2$, $\tilde{W}^{2}$  and $\tilde{N}^{2}$ are independent.
	
	The following theorem is our first main result.
	\begin{theorem}(\textbf {Strong convergence})\label{main result 1}
		Suppose that assumptions \ref{A1}-\ref{A3}  and  \ref{B1}-\ref{B3} hold, and $\sigma(x,y)\equiv\sigma(x)$. Then for any $x\in\RR^{n}$, $y\in\RR^{m}$ and $T > 0$, there exists $C>0$ depends on $p,T$, $|x|,|y|$ such that for small enough $\varepsilon>0$, we have
		\begin{eqnarray*}
			\mathbb{E}\left(\sup_{0\leq t \leq T}\left |X_{t}^{\varepsilon}-\bar{X_{t}}\right |^{p}\right)\le C\varepsilon ^{\frac{p}{2}}.
		\end{eqnarray*}
		where $\bar{X_{t} }$ is the solution of the corresponding averaged equation:
		\begin{equation*}
			d\bar{X_{t} }=\bar{b}(\bar{X}_{t})dt+\sigma (\bar{X}_{t})dW_{t}^{1}+\int_{\mathcal{Z}_1 }h_{1}(\bar{X }_{t-},z)\tilde{N}^{1}(dz,dt) \\
			\displaystyle ,\bar{X}_0=x,
		\end{equation*}
		where $\bar{b}(x)=\int_{\RR^{m}}b(x,y)\mu^{x}(dy)$.
	\end{theorem}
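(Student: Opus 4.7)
My plan is to follow the Poisson equation strategy outlined in the introduction, adapted to the L\'evy setting. Starting from It\^o's formula applied to $|X^\vare_t-\bar X_t|^p$, I first exploit the fact that $\sigma(x,y)\equiv\sigma(x)$: the $\sigma$-martingale contributions from the two equations differ only by a Lipschitz increment in $x$, the $h_1$ jump martingale is controlled by \eref{ConA12}, and the drift cross term splits as $\langle X^\vare_s-\bar X_s, b(X^\vare_s,Y^\vare_s)-b(\bar X_s,Y^\vare_s)\rangle + \langle X^\vare_s-\bar X_s, b(X^\vare_s,Y^\vare_s)-\bar b(X^\vare_s)\rangle + \langle X^\vare_s-\bar X_s, \bar b(X^\vare_s)-\bar b(\bar X_s)\rangle$. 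Using monotonicity \eref{ConA11} (and the analogous one-sided bound for $\bar b$) together with Burkholder-Davis-Gundy and a Gronwall argument, the task reduces to bounding
\[
\mathbb{E}\sup_{0\le t\le T}\Bigl|\int_0^t \langle X^\vare_s-\bar X_s,\,b(X^\vare_s,Y^\vare_s)-\bar b(X^\vare_s)\rangle\,ds\Bigr|^{p/2}\;\le\; C\,\vare^{p/2}.
\]

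To produce the $\vare^{p/2}$ factor I would invoke the Poisson equation
\[
-\mathscr{L}_2(x)\Phi(x,\cdot)(y) = b(x,y)-\bar b(x),
\]
whose centredness under $\mu^x$ is automatic from the definition of $\bar b$, and whose regularity with polynomial growth in both $x$ and $y$ is exactly what Section~4 establishes. Applying It\^o's formula to $\vare\,\langle X^\vare_t-\bar X_t,\,\Phi(X^\vare_t,Y^\vare_t)\rangle$, the leading fast-time contribution $\tfrac{1}{\vare}\mathscr{L}_2(X^\vare_t)\Phi(X^\vare_t,\cdot)(Y^\vare_t)$ multiplied by $\vare$ reproduces $-[b(X^\vare_t,Y^\vare_t)-\bar b(X^\vare_t)]$, so the offending time integral is rewritten as an $\vare$-scaled boundary term plus $\vare$-scaled drift remainders (involving $b\cdot\partial_x\Phi$ and the trace $\Tr[\sigma\sigma^*\partial_x^2\Phi]$), $\vare$-scaled jump remainders from $h_1$ and $h_2$, and three martingale increments from $W^1$, $W^2$, and the compensated measures $\tilde N^1,\tilde N^{2,\vare}$.

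Taking absolute values, the supremum, and the $(p/2)$-th power, I would estimate each of these pieces separately: the boundary term yields $\vare^{p/2}$ directly from the polynomial bound on $\Phi$ coupled with the uniform moment estimates of Section~3 for $(X^\vare,Y^\vare)$; the $\vare$-scaled drift and trace remainders give at most $\vare^{p/2}$ after a Young splitting against $|X^\vare_s-\bar X_s|$; the $W^1$ martingale gives $\vare^{p/2}$ via BDG using $\|\partial_x\Phi\|$ polynomial growth and the bound $\|\sigma(x)\|\le C(1+|x|)$; the $W^2$ contribution carries a prefactor $\sqrt{\vare}$ from the $1/\sqrt{\vare}$ in $Y^\vare$'s dynamics and again yields $\vare^{p/2}$ after BDG; and the jump martingales are treated similarly, the $\tilde N^{2,\vare}$ scaling producing the same $\sqrt{\vare}$ after BDG because its compensator scales as $1/\vare$.

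The hardest step, in my estimation, will be the jump remainder coming from the $\tilde N^{2,\vare}$ compensator combined with the non-local increment $\Phi(X^\vare_{s-},Y^\vare_{s-}+h_2)-\Phi-\partial_y\Phi\cdot h_2$: it involves $\partial_y^2\Phi$ (polynomial growth in $y$) integrated against $|h_2|^2\nu_2$, which grows like $(1+|Y^\vare|^{2\zeta_2})$ by \eref{ConB32}, and the whole integrand must be paired with $|X^\vare_s-\bar X_s|$ without destroying the overall $\vare$ scaling. This is precisely where the constraints $\zeta_1,\zeta_2<1$ and the availability of arbitrary moments of $Y^\vare$ (from Section~3) are essential, allowing me to absorb the polynomial-in-$Y$ terms via H\"older, apply BDG one last time, and close the bound by Gronwall to conclude the $\vare^{p/2}$ rate.
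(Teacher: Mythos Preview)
Your plan is essentially the paper's proof. Two minor corrections are worth flagging. First, your three-term drift split is algebraically wrong (the terms do not sum to $b(X^\vare_s,Y^\vare_s)-\bar b(\bar X_s)$); the paper uses only your last two terms, handling $\langle X^\vare_s-\bar X_s,\bar b(X^\vare_s)-\bar b(\bar X_s)\rangle$ via the one-sided Lipschitz bound \eref{Avc1} for $\bar b$. Second, the ``hardest step'' you describe is a phantom: the non-local increment $\Phi(x,y+h_2)-\Phi-\partial_y\Phi\cdot h_2$ integrated against $\tfrac{1}{\vare}\nu_2$ is exactly the jump part of $\mathscr{L}_2(x)\Phi$, so it disappears in the cancellation $\mathscr{L}_2\Phi=-(b-\bar b)$ and is never a remainder; what survives from the fast jumps is only the $\tilde N^{2,\vare}$ \emph{martingale}, and the paper controls it with BDG and the first-order bound $|\Phi(x,y+h_2)-\Phi(x,y)|\le \|\partial_y\Phi\|\,|h_2|$ from \eref{E2}---no estimate on $\partial_y^2\Phi$ is ever required. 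One subtlety you gloss over: the boundary term $\vare\langle\Phi(X^\vare_t,Y^\vare_t),X^\vare_t-\bar X_t\rangle$ after Young's inequality produces $\vare^p\,\EE\sup_t|Y^\vare_t|^{p(k+1)}$, and here the paper invokes Lemma~\ref{PMY2} ($\EE\sup_t|Y^\vare_t|^p\le C/\vare$) rather than the pointwise moment bound \eref{Y}, which is why that lemma exists.
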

	\begin{remark}
		The above result implies that the convergence order is $1/2$, which is the optimal order in the strong sense (see \cite[Example 1]{L2010}). Note that the L\'evy process considered here can not cover the $\alpha$-stable process, which does not have finite second moment, thus this is no contradiction with the obtained optimal strong convergence order $1-1/\alpha$ in \cite{SXX}. It is also worthy to point that the diffusion coefficient $\sigma$ does not depend on the fast component $Y^{\vare}$, otherwise the strong convergence may fail (see a counter-example in \cite[section 4.1]{L2010}).
	\end{remark}
	
	In order to prove the weak convergence order, we need the following assumption:
	
	For any for any $p\geq 2$, there exist positive constants $C,C_p$ and $k$ such that the following conditions hold for any $x\in\RR^n,y\in\RR^m, z_1\in\mathcal{Z}_1,z_2\in\mathcal{Z}_2$:

	\begin{conditionA} \label{A4} There exist $\ell>16$, $\beta>0$ and $L_{h_2}\in [0,\beta)$ such that
		\begin{eqnarray}
			&&2\left \langle f(x_1,y_{1})-f(x_2,y_{2}),y_{1}-y_{2}\right\rangle +(\ell-1)\| g(x_1,y_{1})-g(x_2,y_{2})\|^{2} \nonumber  \\
			&&\!\!\!\!\!+2^{\ell-3}(\ell-1)\!\int_{\mathcal{Z}_2}\!\!|h_{2}(x_1,y_{1},z)-h_{2}(x_2,y_{2},z)|^{2}\nu_2(dz)\le\!\! -\beta|y_1-y_2|^{2}\!+\!C|x_1-x_2|^2,
			\label{ConA422}
		\end{eqnarray}
		\begin{eqnarray}
			2^{\ell-3}(\ell-1)\int_{\mathcal{Z}_2}|h_{2}(x_1,y_{1},z)-h_{2}(x_2,y_{2},z)|^{\ell}\nu_2(dz)\le L_{h_2} |y_1-y_2|^{\ell}+C|x_1-x_2|^{\ell}.  \label{ConA423}
		\end{eqnarray}
		Moreover, $b\in C^{4,4}_p(\RR^n\times\RR^m,\RR^n)$, $\sigma\in C^{4,4}(\RR^n\times\RR^m,\RR^n\times\RR^{d_1})$, $h_1(\cdot,z_1)\in C^4(\RR^n,\RR^n)$, and for any $0\leq i,j\leq 4$ with $1\leq i+j\leq 4$,
		\begin{eqnarray}
			\|\partial^j_{y}\partial^i_{x}\sigma(x,y)\|\leq C(1+|y|^k),\label{ConA412}
		\end{eqnarray}
		\begin{eqnarray}
		\int_{\mathcal{Z}_1}\|\partial^i_xh_{1}(x,z)\|^{p}\nu_1(dz)\le C_p\label{ConA413}
		\end{eqnarray}
		and
		\begin{eqnarray}
			\inf_{x\in\RR^n,y\in \RR^m,z\in \RR^n\backslash\{0\}}\frac{\langle(\sigma(x,y)\sigma^{\ast}(x,y))\cdot z, z\rangle}{|z|^2}>0. \label{ConA42}
		\end{eqnarray}
		Furthermore $f\in C^{4,4}(\RR^n\times\RR^m,\RR^m)$, $g\in C^{4,4}(\RR^n\times\RR^m,\RR^m\times \RR^{d_2})$, $h_2(\cdot,\cdot,z_2)\in C^{4,4}(\RR^n\times\RR^m,\RR^m)$ and for any $0\leq i,j\leq 4$ with $1\leq i+j\leq 4$,
		\begin{eqnarray}
			\|\partial^{j}_y\partial^i_{x}f(x,y)\|+\|\partial^{j}_y\partial^i_{x}g(x,y)\|+\int_{\mathcal{Z}_2}\|\partial^j_{y}\partial^i_{x}h_{2}(x,y,z)\|^{p}\nu_{2}(dz)\le C_p(1+|y|^k).\label{ConA43}
		\end{eqnarray}
	\end{conditionA}

	\begin{remark}
		Roughly speaking, assumption \ref{A4} is used to study the regularity estimate of the solution $u(t,x)$ of the Kolmogorov equation (see equation \eref{KE} in section 5.2). More precisely,
		\eref{ConA412} is used to study the regularity estimate of the averaged coefficient  $\overline{\sigma\sigma^{\ast}}$; \eref{ConA42} is used to study the regularity estimate of the averaged coefficient $\bar{\sigma}=\left(\overline{\sigma\sigma^{\ast}}\right)^{1/2}$;
		\eref{ConA422} and \eref{ConA423} are stronger than \eref{ConB1} and \eref{ConB11}, however these two conditions are used to study the partial derivative of the solution $Y^{x,y}_t$ with respect to $x$ lives in $L^{\ell}$ norm, i.e., $\sup_{t\geq 0}\EE\|\partial_xY^{x,y}_t\|^{\ell}<\infty$, for some $\ell>16$.
	\end{remark}

	The following theorem is our second main result.
	\begin{theorem}(\textbf {Weak convergence})\label{main result 2}
		Suppose that assumptions \ref{A1}-\ref{A4}  and  \ref{B1}-\ref{B3} hold. Then for any $\phi\in C^{4}_p(\RR^n)$, $x\in\RR^{n}$, $y\in\RR^{m}$ and $T > 0$, there exists $C>0$ depends on $T$, $|x|,|y|$ such that for any $\varepsilon>0$,
		\begin{eqnarray*}
			\sup_{t\in[0,T]}\left | \mathbb{E}\phi (X_{t}^{\varepsilon})-\mathbb{E}\phi (\bar{X_{t} })\right |\le C\varepsilon.
		\end{eqnarray*}
		where $\bar{X_{t} }$ is the solution of the corresponding averaged equation:
		\begin{equation*}
			d\bar{X_{t} }=\bar{b}(\bar{X}_{t})dt+\bar \sigma (\bar{X}_{t})d W_{t}+\int_{\mathcal{Z}_1 }^{}h_{1}(\bar{X }_{t-},z)\tilde{N}^{1}(dz,dt) \\
			\displaystyle ,\bar{X}_0=x,
		\end{equation*}
		where $\bar{b}(x)=\int_{\RR^{m}}b(x,y)\mu^{x}(dy)$, $\bar \sigma (x)=\left[\int_{\RR^m}\sigma(x,y)\sigma^{\ast}(x,y)\mu^x(dy)\right]^{1/2}$ (i.e., $\bar \sigma (x)\bar \sigma (x)=\int_{\RR^m}\sigma(x,y)\sigma^{\ast}(x,y)\mu^x(dy)$) and $W$ is a $n$-dimensional standard Wiener process independent of $\tilde{N}^1$.
	\end{theorem}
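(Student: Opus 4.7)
The plan is to combine the Kolmogorov-equation method with the Poisson-equation technique, as sketched in the introduction, and reduce everything to bounds controlled by the moment estimates from Section~3 and the Poisson-regularity estimates developed in Section~4.

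First I introduce the Kolmogorov equation
\[
\partial_t u(t,x)=\bar{\mathscr{L}}_1 u(t,x),\qquad u(0,\cdot)=\phi,
\]
where $\bar{\mathscr{L}}_1$ is the generator of the averaged process $\bar X$. Under \ref{A4}, the averaged coefficients $\bar b$ and $\bar\sigma$ are smooth with polynomial growth, the diffusion $\bar\sigma\bar\sigma^{\ast}=\overline{\sigma\sigma^{\ast}}$ inherits uniform ellipticity from \eref{ConA42}, and $h_1(\cdot,z)$ is $C^{4}$ with the integrability in \eref{ConA413}. Classical parabolic theory for nondegenerate jump-diffusions then yields $u\in C^{1,4}([0,T]\times\RR^n)$ with $\partial_x^i u$ of polynomial growth ($0\le i\le 4$), uniformly in $t\in[0,T]$. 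Setting $\tilde u^t(s,x):=u(t-s,x)$, one has $\tilde u^t(t,X^\vare_t)=\phi(X^\vare_t)$ and $\tilde u^t(0,x)=\EE\phi(\bar X_t)$; applying It\^o's formula to $\tilde u^t(s,X^\vare_s)$ and taking expectation, the martingale terms vanish and the $\tilde N^1$-jump contribution is the same in both the full slow generator and in $\bar{\mathscr{L}}_1$, giving
\[
\EE\phi(X^\vare_t)-\EE\phi(\bar X_t)=\EE\int_0^t\bigl[F^t(s,X^\vare_s,Y^\vare_s)-\bar F^t(s,X^\vare_s)\bigr]\,ds,
\]
with $F^t,\bar F^t$ as in \eref{Intro(3)}. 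By construction $\bar F^t$ is the $\mu^x$-average of $F^t$, so the integrand is centered with respect to $\mu^{X^\vare_s}$.

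To extract a factor of $\vare$ from this centering, I solve the Poisson equation
\[
-\mathscr{L}_{2}(x)\tilde\Phi^t(s,x,\cdot)(y)=F^t(s,x,y)-\bar F^t(s,x),
\]
invoking the regularity theory of Section~4 to control $\tilde\Phi^t$ and its partial derivatives in $s,x,y$ by polynomial bounds inherited from the regularity of $F^t,\bar F^t$ (which in turn uses \ref{A1}--\ref{A4} and the estimates on $u$ above). Applying It\^o's formula to $\tilde\Phi^t(s,X^\vare_s,Y^\vare_s)$, and using that the joint generator of $(X^\vare,Y^\vare)$ is $\mathscr{L}_1^{x,y}+\tfrac{1}{\vare}\mathscr{L}_2(x)$, the defining identity $\mathscr{L}_2\tilde\Phi^t=-(F^t-\bar F^t)$ rearranges after expectation to
\[
\EE\int_0^t[F^t-\bar F^t]\,ds=\vare\Bigl\{\EE\tilde\Phi^t(0,x,y)-\EE\tilde\Phi^t(t,X^\vare_t,Y^\vare_t)+\EE\int_0^t[\partial_s+\mathscr{L}_1^{x,y}]\tilde\Phi^t\,ds\Bigr\}.
\]
Each bracketed quantity is then bounded uniformly in $\vare$ and $t\in[0,T]$ by pairing the polynomial regularity of $\tilde\Phi^t$ with the moment estimates $\sup_{s\in[0,T]}(\EE|X^\vare_s|^p+\EE|Y^\vare_s|^p)<\infty$ from Section~3, delivering the claimed rate $\vare$.

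The main obstacle will be the precise regularity of the Poisson solution $\tilde\Phi^t(s,x,y)$: one needs polynomial-in-$(x,y)$ control of $\partial_s\tilde\Phi^t$ and of $\partial_x^i\partial_y^j\tilde\Phi^t$ up to the orders entering the It\^o expansion, and one must verify that enough smoothness of $u(t-s,\cdot)$ and of the coefficients propagates through averaging and through the frozen semigroup. This is precisely the role of \ref{A4}: the fourth-order smoothness of $b,\sigma,f,g,h_1,h_2$ and the strengthened monotonicity with $\ell>16$, which yields $\sup_{t\ge 0}\EE\|\partial_x Y^{x,y}_t\|^{\ell}<\infty$, feed into the Section~4 estimates on $\tilde\Phi^t$ and close the argument.
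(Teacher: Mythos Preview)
Your proposal is correct and follows essentially the same route as the paper: Kolmogorov equation for $\bar X$, the It\^o reduction to $\EE\int_0^t[F^t-\bar F^t]\,ds$, a Poisson equation for $\tilde\Phi^t$, and a second It\^o expansion to extract the factor $\vare$. The only noteworthy difference is that the paper does not invoke ``classical parabolic theory'' for the regularity of $u$ (the drift $\bar b$ has polynomial growth, so off-the-shelf results are delicate); instead it proves Lemma~5.1 directly by differentiating the stochastic flow $x\mapsto\bar X^x_t$ and using moment bounds \eqref{D barX}--\eqref{Dxxxx barX} on $\partial_x^i\bar X^x_t$, with the ellipticity \eqref{ConA42} entering only to guarantee $\bar\sigma=(\overline{\sigma\sigma^\ast})^{1/2}\in C^4_b$.
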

	\begin{remark}
		The above result implies that the weak convergence order is $1$. It is worthy to point that the diffusion coefficient $\sigma(x,y)$ can depend on the fast component, however the coefficient $h_1(x,z)$ here is independent of the fast component due to the reason of technique. In fact, since the generator of the jump part is a nonlocal operator, it usually non-trivial to describe the averaged coefficients $\bar{h}_1(x,z)$ if $h_1(x,y,z)$ depends of the fast component. Of course, $h_1(x,y,z)$ may achieve its averaged coefficient $\bar{h}_1(x,z)$ in some special jump processes.
	\end{remark}
	
	\subsection{Examples}
	In this subsection, we give two concrete examples to illustrate the applicability of our main results. For simplicity, we only consider the 1-dimensional case, but one can
	easily extend to the multi-dimensional case.
	
	\begin{example} \label{EX1}Let us consider the following slow-fast SDEs,
		\begin{equation}\left\{\begin{array}{l}
				\displaystyle
				d X^{\vare}_t = \left[-(X^{\vare}_t)^3+X^{\vare}_t+(Y^{\vare}_t)^3\right]dt+ \sigma(X^{\vare}_t,Y^{\vare}_t)d W^{1}_t+\int_{\RR}z\tilde{N}^1(dz,dt),\quad X^{\vare}_0=x\in \RR,\nonumber\\
				\displaystyle
				d Y^{\vare}_t =\frac{1}{\vare}\left[\sin(X^{\vare}_t)-Y^{\vare}_t-(Y^{\vare}_t)^5\right]dt+\frac{1}{\sqrt{\vare}}d W^{2}_t+\int_{\RR}z\tilde{N}^{2,\vare}(dz,dt),\quad Y^{\vare}_0=y\in \RR,\nonumber
			\end{array}\right.
		\end{equation}
		where $\{W^{1}_t\}_{t\geq 0}$ and $\{W^{2}_t\}_{t\geq 0}$ are independent $1$-dimensional Brownian motions, $\tilde{N}^{1}$ and $\tilde{N}^{2,\vare}$ are two compound Poisson random measures with L\'evy measures $\nu_1$ and $\frac{1}{\varepsilon}\nu_2$ respectively, here $\int_{|z|\leq 1}|z|^2\nu_i(dz)\leq C$ and $\nu_i([1,\infty))=0$, for $i=1,2$.
		
		Let
		$$
		b(x,y)=-x^3+x+y^3, \quad h_1(x,z)=z,
		$$
		and
		$$
		f(x,y)=\sin(x)-y-y^5, \quad g(x,y)=1,\quad \quad h_2(x,y,z)=z.
		$$
		
		On one hand, if $ \sigma(x,y)=x$, it is easy to verify that \ref{A1}-\ref{A3} and \ref{B1}-\ref{B3} hold with
		$k=4$, $q=6$, $\beta=2$, $\lambda=1/2$, $\forall \ell>8$ and $L_{h_2}=\zeta_1=\zeta_2=0$.
		Thus,  by Theorem \ref{main result 1} for any $p>0$ we have
		\begin{eqnarray*}
			\mathbb{E}\left(\sup_{t\in [0, T]}|X_{t}^{\vare}-\bar{X}_{t}|^p\right)\leq C\vare^{p/2},
		\end{eqnarray*}
		where $\bar{X}_{t}$ is the solution of the corresponding averaged equation.
		
		On the other hand, if $\sigma(x,y)=\sin(x)+\sin(y)+3$, it is easy to verify that \ref{A1}-\ref{A4} and \ref{B1}-\ref{B3} hold with
		$k=4$, $q=6$, $\beta=2$, $\lambda=1/2$, $\forall \ell>16$ and $L_{h_2}=\zeta_1=\zeta_2=0$.
		Thus,  by Theorem \ref{main result 2} for any  $\phi\in C^{4}_p(\RR^n)$, we have
		\begin{eqnarray*}
			\sup_{t\in[0,T]}\left | \mathbb{E}\phi (X_{t}^{\varepsilon})-\mathbb{E}\phi (\bar{X_{t} })\right |\le C\varepsilon,
		\end{eqnarray*}
		where $\bar{X}_{t}$ is the solution of the corresponding averaged equation.
	\end{example}
	
	\vspace{0.2cm}

	\begin{example} Let us consider the following slow-fast SDEs,
		\begin{equation}\left\{\begin{array}{l}\label{EX2}
				\displaystyle
				d X^{\vare}_t = \left[X^{\vare}_t-\arctan(X^{\vare}_t) (Y^{\vare}_t)^2+Y^{\vare}_t\right]dt+d W^{1}_t+\int_{\RR}z\tilde{N}^1(dz,dt),\quad X^{\vare}_0=x\in \RR,\nonumber\\
				\displaystyle
				d Y^{\vare}_t =\frac{1}{\vare}\left[\cos(X^{\vare}_t)-(Y^{\vare}_t)^3 \right]dt+\frac{1}{\sqrt{\vare}}d W^{2}_t+\int_{\RR}z\tilde{N}^{2,\vare}(dz,dt),\quad Y^{\vare}_0=y\in \RR,\nonumber
			\end{array}\right.
		\end{equation}
		where $\{W^{1}_t\}_{t\geq 0}$, $\{W^{2}_t\}_{t\geq 0}$, $\tilde{N}^{1}$ and $\tilde{N}^{2,\vare}$ are the same setting in Example \ref{EX1}.
		
		Let
		$$
		b(x,y)=x-\arctan(x) y^2+ y, \quad \sigma(x,y)=1, \quad h_1(x,z)=z,
		$$
		and
		$$
		f(x,y)=\cos(x)-y^3, \quad g(x,y)=1,\quad h_2(x,y,z)=z,
		$$
		Then it is easy to verify that \ref{A1}-\ref{A3} and \ref{B1}-\ref{B3} hold with $k=2$, $q=4$,$\beta=1$, $\lambda=1/2$, $\forall \ell>8$ and $L_{h_2}=\zeta_1=\zeta_2=0$.
		Thus,  by Theorem \ref{main result 1} for any $p>0$ we have
		\begin{eqnarray*}
			\mathbb{E}\left(\sup_{t\in [0, T]}|X_{t}^{\vare}-\bar{X}_{t}|^p\right)\leq C\vare^{p/2},
		\end{eqnarray*}
		where $\bar{X}_{t}$ is the solution of the corresponding averaged equation.
		
		Furthermore, it is easy to verify that \ref{A4} holds for any $\forall \ell>16$. Thus,  by Theorem \ref{main result 2} for any  $\phi\in C^{4}_p(\RR^n)$, we have
		\begin{eqnarray*}
			\sup_{t\in[0,T]}\left | \mathbb{E}\phi (X_{t}^{\varepsilon})-\mathbb{E}\phi (\bar{X_{t} })\right |\le C\varepsilon,
		\end{eqnarray*}
		where $\bar{X}_{t}$ is the solution of the corresponding averaged equation.
	\end{example}
	
	\section{Preliminaries}
	
	This section is a preparation for the proofs of our main results. In subsection 3.1, we give some  a priori estimates of the solution $(X^{\vare}_t, Y^{\vare}_t)$.
	In subsection 3.2, we introduce the frozen equation and give some estimates of the solution, then prove the exponential ergodicity of the corresponding transition semigroup. In the final subsection, we study the averaged equation. Note that the assumptions \ref{A1}-\ref{A3}  and  \ref{B1}-\ref{B3} hold in this section.
	
	\subsection{A priori estimates of $(X^{\vare}_t, Y^{\vare}_t)$}
	\begin{lemma} \label{PMY}
		For any initial value $x\in\RR^{n}, y\in \RR^{m}$ and $\vare>0$, system (\ref{Equation})  admits a unique strong solution $\{(X^{\vare}_t,Y^{\vare}_t), t\geq 0\}$. Moreover, for any $p\ge 1$ and  $T>0$, there exists $C_{p,T}>0$ such that
		\begin{eqnarray}
			\sup_{\vare>0}\mathbb{E}\left(\sup_{0\leq t\leq T}|X_{t}^{\vare}|^p\right)\leq C_{p,T}(1+|x|^p+|y|^{\frac{pq}{2}\vee (k+1)p}),  \label{X}
		\end{eqnarray}
		\vspace{0.1cm}
		and
		\vspace{0.1cm}
		\begin{eqnarray}
			\!\!\!\!\!\!\!\!\!\!&&\sup_{\vare>0}\sup_{t\ge 0}\mathbb{E}| Y_t^{\varepsilon }|^{p}\leq C_{p}(1\!+|y|^{p}).\label{Y}
		\end{eqnarray}
	\end{lemma}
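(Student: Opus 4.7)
The plan is to establish the three assertions in the order: the uniform-in-time bound for $Y^{\vare}$ first, then the $L^p$-bound with supremum for $X^{\vare}$, with existence and uniqueness of the strong solution obtained by a standard monotone-SDE truncation argument run in parallel with these a priori estimates.

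For the $Y^{\vare}$ estimate, I would apply It\^o's formula to $|Y^{\vare}_t|^p$ and exploit the $1/\vare$ scaling in the fast equation. After taking expectation, the generator action yields a drift of the form $\frac{1}{\vare}\EE[p|Y^{\vare}|^{p-2}\langle f(X^{\vare},Y^{\vare}),Y^{\vare}\rangle]$ plus diffusion and jump contributions. The coercivity \ref{B2} furnishes a dominating negative term $-\frac{\lambda p}{\vare}|Y^{\vare}|^p$, while the remaining contributions are controlled by \ref{B3}: $\|g\|^2\leq C(1+|Y^{\vare}|^{2\zeta_1})$, $\int_{\mathcal{Z}_2}|h_2|^p\nu_2(dz)\leq C(1+|Y^{\vare}|^{p\zeta_2})$, together with the standard jump remainder inequality $|y+h|^p-|y|^p-p|y|^{p-2}\langle y,h\rangle\leq C_p(|y|^{p-2}|h|^2+|h|^p)$. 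Since $\zeta_1,\zeta_2\in[0,1)$, Young's inequality absorbs all lower-order powers of $|Y^{\vare}|$ into half of the dissipative term, producing the linear ODE inequality
\begin{equation*}
\frac{d}{dt}\EE|Y^{\vare}_t|^p\leq\frac{1}{\vare}\bigl(-c_p\EE|Y^{\vare}_t|^p+C_p\bigr),
\end{equation*}
whose comparison solution gives $\EE|Y^{\vare}_t|^p\leq C_p(1+|y|^p)$ uniformly in $t\geq 0$ and $\vare>0$.

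For the $X^{\vare}$ estimate, I would similarly apply It\^o's formula to $|X^{\vare}_t|^p$ and use the rearrangement identity
\begin{equation*}
p|X|^{p-2}\langle b,X\rangle+\tfrac{p(p-1)}{2}|X|^{p-2}\|\sigma\|^2=\tfrac{p(p-1)}{2}|X|^{p-2}\bigl(2\langle b,X\rangle+\|\sigma\|^2\bigr)-p(p-2)|X|^{p-2}\langle b,X\rangle.
\end{equation*}
Coercivity \ref{A2} bounds the first bracket by $C(1+|X|^2+|Y|^q)$, producing a $|X|^{p-2}|Y|^q$ term that Young's inequality with exponents $(p/(p-2),p/2)$ converts into $|Y|^{pq/2}$. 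The leftover $|X|^{p-2}\langle b,X\rangle$ is split via $\langle b(X,Y),X\rangle=\langle b(X,0),X\rangle+\langle b(X,Y)-b(X,0),X\rangle$: the first piece is controlled by the monotonicity \eref{ConA11} and the polynomial bound from \ref{A3}, while the second is estimated by \eref{ConA112} as $|\langle b(X,Y)-b(X,0),X\rangle|\leq C(1+|Y|^k)|Y||X|\leq C|X||Y|+C|X||Y|^{k+1}$. Multiplying by $|X|^{p-2}$ and applying Young with exponents $(p/(p-1),p)$ to $|X|^{p-1}|Y|^{k+1}$ produces the $|Y|^{(k+1)p}$ contribution. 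The jump part of the generator is bounded by $C_p(|X|^{p-2}\int|h_1|^2\nu_1+\int|h_1|^p\nu_1)\leq C(1+|X|^p)$ via \eref{ConA32}. Collecting everything and inserting the already obtained bound for $\EE|Y^{\vare}_s|^{pq/2\vee(k+1)p}$ yields a Gronwall-type inequality whose solution is the pointwise bound. The supremum is then upgraded via the Burkholder-Davis-Gundy inequality for the Wiener martingale and Kunita's inequality for the Poisson martingale, absorbing a fraction of $\EE\sup_{s\leq t}|X^{\vare}_s|^p$ on the left-hand side; a localization through $\tau_N=\inf\{t:|X^{\vare}_t|+|Y^{\vare}_t|\geq N\}$ legitimizes each step, and Fatou's lemma removes it at the end.

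The main obstacle is controlling $\|\sigma(X^{\vare},Y^{\vare})\|^2$ in the higher-moment computation when $\sigma$ depends on $Y^{\vare}$, since \ref{A2} bounds only the combination $2\langle b,X\rangle+\|\sigma\|^2$ rather than $\|\sigma\|^2$ alone. The identity displayed above is precisely designed to avoid isolating $\|\sigma\|^2$, but this forces a separate estimate of $\langle b,X\rangle$ using the polynomial growth in \ref{A3}, which is the source of the $|Y|^{(k+1)p}$ exponent in the final bound. Existence and uniqueness themselves follow by a routine truncation argument: one cuts off the coefficients at level $N$ to obtain globally Lipschitz approximations $(X^{\vare,N},Y^{\vare,N})$, uses the a priori estimates to pass to the limit, and establishes pathwise uniqueness by applying It\^o's formula to $|X^{\vare,1}_t-X^{\vare,2}_t|^2+|Y^{\vare,1}_t-Y^{\vare,2}_t|^2$ for two solutions with the same initial data, invoking \eref{ConA11}, \eref{ConA113}, \eref{ConB1} together with the uniform $L^p$-bounds to dominate the cross terms, and concluding via Gronwall.
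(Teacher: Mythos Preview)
Your treatment of the $Y^{\vare}$ bound and of existence/uniqueness is essentially what the paper does (the paper verifies local monotonicity and coercivity for the coupled system and cites a known well-posedness result, which amounts to your truncation argument). The gap is in your handling of the $X^{\vare}$ bound.

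Your rearrangement identity
\[
p|X|^{p-2}\langle b,X\rangle+\tfrac{p(p-1)}{2}|X|^{p-2}\|\sigma\|^2=\tfrac{p(p-1)}{2}|X|^{p-2}\bigl(2\langle b,X\rangle+\|\sigma\|^2\bigr)-p(p-2)|X|^{p-2}\langle b,X\rangle
\]
leaves you with $-p(p-2)|X|^{p-2}\langle b,X\rangle$ to bound from above, i.e.\ you need a \emph{lower} bound on $\langle b(X,Y),X\rangle$. Neither of the tools you invoke supplies one. Monotonicity \eref{ConA11} gives only $\langle b(x,0),x\rangle\leq C(1+|x|^2)$, which is the wrong direction. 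The polynomial growth in \ref{A3} gives $|b(x,0)|\leq C(1+|x|^k)$, hence $|X|^{p-2}|\langle b(X,0),X\rangle|\leq C(|X|^{p-1}+|X|^{p+k-1})$; since $k\geq 2$, the power $p+k-1>p$ destroys any Gronwall closure. For the paper's running example $b(x,y)=-|x|^2x+x+|y|^2y$ you would literally get $-|X|^{p-2}\langle b(X,0),X\rangle=|X|^{p+2}-|X|^p$, which blows up.

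The paper avoids the rearrangement entirely. It bounds the drift term $p\int_0^t|X_s|^{p-2}\langle X_s,b\rangle\,ds$ from above directly via \ref{A2} (only an upper bound on $\langle b,X\rangle$ is needed since one is controlling $\sup_t$ of a quantity, not its absolute value), and bounds the diffusion term $|X|^{p-2}\|\sigma\|^2$ \emph{separately} using the Lipschitz-in-$x$ condition \eref{ConA113}, which yields $\|\sigma(x,y)\|\leq C(1+|x|+|y|^{k+1})$ and hence $\|\sigma\|^2\leq C(1+|X|^2+|Y|^{2(k+1)})$. This is where the $|y|^{(k+1)p}$ exponent actually comes from. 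So rather than trading an isolated $\|\sigma\|^2$ bound for an isolated $\langle b,X\rangle$ bound, the correct route is to exploit that $\sigma$ grows at most linearly in $x$ (unlike $b$), which makes the direct estimate of $\|\sigma\|^2$ harmless.
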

	
	\begin{proof}
		We denote
		$$
		Z^{\vare}_t=\left(
		\begin{array}{c}
			X^{\vare}_t \\
			Y^{\vare}_t \\
		\end{array}
		\right)
		,\quad \tilde{b}^{\vare}(x,y)=\left(
		\begin{array}{c}
			b(x,y) \\
			\frac{1}{\vare}f(x,y) \\
		\end{array}
		\right)
		,\quad \tilde{h}_1(x,y,z)=\left(
		\begin{array}{c}
			h_1(x,z) \\
			0 \\
		\end{array}
		\right)
		$$
		and
		$$
		\tilde{h}_2(x,y,z)=\left(
		\begin{array}{c}
			0\\
			h_2(x,y,z) \\
		\end{array}
		\right)
		,\quad \tilde{\sigma}^{\vare}(x,y)=\left(
		\begin{array}{cc}
			\sigma(x,y) & 0 \\
			0 & \frac{1}{\sqrt{\vare}}g(x,y) \\
		\end{array}
		\right)
		, \quad \tilde{W}_t=\left(
		\begin{array}{c}
			W^1_t \\
			W^2_t \\
		\end{array}
		\right).
		$$
		Then system \eref{Equation} can be rewritten as the following equation
		\begin{equation}
			\left\{\begin{array}{l} \label{Eq2}
				\displaystyle
				dZ^{\vare}_t=\tilde{b}^{\vare}(Z^{\vare}_t)dt+\tilde{\sigma}^{\vare}( Z^{\vare}_t)d \tilde{W}_t+\int_{\mathcal{Z}_1}\tilde{h}_1(Z^{\vare}_{t-},z)\tilde{N}^1(dt,dz)+\int_{\mathcal{Z}_2}\tilde{h}_2(Z^{\vare}_{t-},z)\tilde{N}^{2,\vare}(dt,dz),\\
				Z^{\vare}_0= \left(
				\begin{array}{c}
					x \\
					y\\
				\end{array}
				\right). \end{array}\right.
		\end{equation}
		
		Under the assumptions \ref{A1}-\ref{A3} and \ref{B1}-\ref{B3}, it is easy to check that for any $R>0$, $z_i=(x_i, y_i)\in \RR^{n+m}$ with $|z_i|\leq R$, $i=1,2$, there exists $C_{R,\vare}>0$ such that
		\begin{eqnarray*}
			&&2\langle \tilde{b}^{\vare}(z_1)-\tilde{b}^{\vare}(z_2), z_1-z_2\rangle+\|\tilde{\sigma}^{\vare}(z_1)-\tilde{\sigma}^{\vare}(z_2)\|^2+\int_{\mathcal{Z}_1}|\tilde{h}_1(z_1,z)-\tilde{h}_1(z_2,z)|^2\nu_1(dz)\\
			&&\quad\quad\quad+\int_{\mathcal{Z}_2}|\tilde{h}_2(z_1,z)-\tilde{h}_2(z_2,z)|^2\frac{1}{\vare}\nu_2(dz)\\
			\leq\!\!\!\!\!\!\!\!&&2\langle b(x_1, y_1)-b(x_2, y_2), x_1-x_2\rangle+\frac{2}{\vare}\langle f(x_1, y_1)-f(x_2, y_2), y_1-y_2\rangle\\
			&&+\|\sigma(x_1,y_1)-\sigma(x_2,y_2)\|^2+\frac{1}{\vare}\|g(x_1, y_1)-g(x_2, y_2)\|^2\\
			&&+\int_{\mathcal{Z}_1}|h_1(x_1,z)-h_1(x_2,z)|^2\nu_1(dz)+\frac{1}{\vare}\int_{\mathcal{Z}_2}|h_2(x_1,y_1,z)-h_2(x_2,y_2,z)|^2\nu_2(dz)\\
			\leq\!\!\!\!\!\!\!\!&& C_{\vare}|x_1-x_2|^2+C(1+|y_1|^k+|y_2|^k)|y_1-y_2||x_1-x_2|+C(1+|y_1|^{2k}+|y_2|^{2k})|y_1-y_2|^2\\
			\leq\!\!\!\!\!\!\!\!&&  C_{R,\vare}|z_1-z_2|^2.
		\end{eqnarray*}
		Furthermore, for small enough $\vare>0$,
		\begin{eqnarray*}
			&&2\langle \tilde{b}^{\vare}( z_1), z_1\rangle+\|\tilde{\sigma}^{\vare}(z_1)\|^2+\int_{\mathcal{Z}_1}|\tilde{h}_1(z_1,z)|^2\nu_1(dz)+\int_{\mathcal{Z}_2}|\tilde{h}_2(z_1,z)|^2\frac{1}{\vare}\nu_2(dz)\\
			\leq\!\!\!\!\!\!\!\!&&2\langle b( x_1, y_1), x_1\rangle+\frac{2}{\vare}\langle f(x_1, y_1), y_1\rangle+\|\sigma(x_1,y_1)\|^2+\frac{1}{\vare}\|g(x_1, y_1)\|^2\\
			&&\quad\quad\quad\quad+\int_{\mathcal{Z}_1}|h_1(x_1,z)|^2\nu_1(dz)+\int_{\mathcal{Z}_2}|h_2(x_1,y_1,z)|^2\frac{1}{\vare}\nu_2(dz)\\
			\leq\!\!\!\!\!\!\!\!&&C(1+|x_1|^2)+C|y_1|^{q}-\frac{2\lambda|y_1|^{q}}{\vare}+\frac{C}{\vare}(1+|y_1|^{2})\\
			\leq\!\!\!\!\!\!\!\!&& C_\vare(1+|z_1|^2).
		\end{eqnarray*}
		Hence by \cite[Theorem 2.8]{XZ2019}, there exists a unique solution $\{(X^{\vare}_t,Y^{\vare}_t), t\geq 0\}$ to system (\ref{Equation}).
		
		It is sufficient to prove \eref{X} and \eref{Y} for $p$ is large enough. Using It\^{o}'s formula and taking expectation, we get for any $p\geq 4$,
		\begin{eqnarray*}
			\mathbb{E}\left|Y_t^{\varepsilon }\right|^{p}=\!\!\!\!\!\!\!\!&&|y|^{p}+\frac{p}{\varepsilon}\mathbb{E}\int_{0}^{t}\big|Y_s^{\varepsilon }\big|^{p-2}\left\langle Y_{s}^{\varepsilon },f(X_{s}^{\varepsilon },Y_{s}^{\varepsilon })\right\rangle ds\\
			\!\!\!\!\!\!\!\!&&+\frac{p(p\!-\!2)}{2\varepsilon}\mathbb{E}\int_{0}^{t}\big|Y_s^{\varepsilon }\big|^{p-4}|g^{*}(X_{s}^{\varepsilon },Y_{s}^{\varepsilon })\cdot Y_{s}^{\varepsilon }|^{2}ds\!+\!\frac{p}{2\varepsilon}\mathbb{E}\int_{0}^{t}\big|Y_s^{\varepsilon }\big|^{p-2}\left\|g(X_{s}^{\varepsilon },Y_{s}^{\varepsilon }) \right\|^{2}ds\\
			\!\!\!\!\!\!\!\!&&+\frac{1}{\varepsilon}\mathbb{E}\int_{0}^{t}\int_{\mathcal{Z}_2}   \left[\big|Y_s^{\varepsilon}+h_2(X_{s}^{\varepsilon},Y_s^{\varepsilon},z)\big|^{p}-\big|Y_s^{\varepsilon}\big|^{p}-p\big|Y_s^{\varepsilon}\big|^{p-2}\langle Y_s^{\varepsilon}, h_2(X_{s}^{\varepsilon},Y_s^{\varepsilon},z)\rangle\right]\nu_2(dz)ds.\\
		\end{eqnarray*}
	
Note that  using Taylor's formula on $F\in C^2(\mathbb{R}^m)$, it follows for any $x=(x_1,\ldots,x_m)$ and $h=(h_1,\ldots,h_m)$, there exists $\xi\in (0,1)$ such that
\begin{eqnarray*}
			F(x+h)-F(x)=\!\!\!\!\!\!\!\!\!\!&&\sum^m_{i=1}\partial_{x_i}F(x)h_i+\frac{1}{2}\sum^m_{i,j=1}\partial_{x_i}\partial_{x_j}F(x+\xi h)h_i h_j,
 \end{eqnarray*}
Then taking $F(x)=|x|^p$ for $p\geq 4$, it is easy to see
 \begin{eqnarray*}
 &&D F(x)=(\partial_{x_1}F(x), \ldots,\partial_{x_m}F(x))=p|x|^{p-2}x,\\
 &&D^2 F(x)=\left(\partial_{x_i}\partial_{x_j}F(x)\right)_{1\leq i,j\leq m}=p(p-2)|x|^{p-4}x\otimes x+p|x|^{p-2}E_{m\times m},
 \end{eqnarray*}
 where $E_{m\times m}$ is the unit matrix on $\mathbb{R}^m$, thus we have for any $a=(a_1,\ldots,a_m)$ and $b=(b_1,\ldots,b_m)$, there exists $\xi\in (0,1)$ such that
\begin{eqnarray}
			&&\left||a+b|^{p}-|a|^{p}-p|a|^{p-2}\langle a, b\rangle\right|\nonumber\\
=\!\!\!\!\!\!\!\!\!\!&&\frac{1}{2}\left|p(p-2)|a+\xi b|^{p-4}\sum^n_{i,j=1}(a_i+b_i\xi)(a_j+b_j\xi)b_i b_j+p|a+\xi b|^{p-2}|b|^2\right|\nonumber\\
\leq\!\!\!\!\!\!\!\!\!\!&&\frac{1}{2}p(p-1)|a+\xi b|^{p-2}|b|^2\nonumber\\
			\leq\!\!\!\!\!\!\!\!\!\!&&2^{p-4}p(p-1)\left(|a|^{p-2}+|b|^{p-2}\right)|b|^2\nonumber\\
\leq\!\!\!\!\!\!\!\!\!\!&&2^{p-4}p(p-1)\left(|a|^{p-2}|b|^2+|b|^{p}\right). \label{Taylor}
		\end{eqnarray}
By \eref{Taylor}, \eref{ConB2} and \eref{ConB32}, we get
		\begin{eqnarray*}
			\frac{d}{dt}\mathbb{E}\left|Y_t^{\varepsilon}\right|^{p}\leq\!\!\!\!\!\!\!\!\!\!&& \frac{p}{\varepsilon}\mathbb{E}\left[\big|Y_t^{\varepsilon }\big|^{p-2}\left \langle Y_{t}^{\varepsilon },f(X_{t}^{\varepsilon },Y_{t}^{\varepsilon })\right \rangle \right]+\!\frac{C_p}{\varepsilon}\mathbb{E}\left[|Y_t^{\varepsilon }|^{p-2}\left\|g(X_{t}^{\varepsilon },Y_{t}^{\varepsilon }) \right\|^{2}\right] \\ \nonumber
			\!\!\!\!\!\!\!\!&&+ \frac{C_p}{\varepsilon}\mathbb{E}\int_{\mathcal{Z}_2} |Y_t^{\varepsilon }|^{p-2}   \big|h_2(X_{t}^{\varepsilon},Y_t^{\varepsilon},z)\big|^{2}\nu_2(dz)+\frac{C_p}{\varepsilon}\mathbb{E}\int_{\mathcal{Z}_2}|h_2(X_{t}^{\varepsilon},Y_t^{\varepsilon},z)|^{p}\nu_2(dz) \\ \nonumber
			\leq\!\!\!\!\!\!\!\!\!\!&& \frac{1}{\varepsilon}\mathbb{E}\left[p|Y_{t}^{\varepsilon} |^{p-2}(-\lambda |Y_{t}^{\varepsilon}|^{2}+C)+C_p|Y_{t}^{\varepsilon} |^{p-2}(1+|Y_{t}^{\varepsilon} |^{2\zeta_1}) \right.\\  \nonumber
			\!\!\!\!\!\!\!\!\!\!&&\quad\quad+\left. C_p|Y_{t}^{\varepsilon} |^{p-2}(1+|Y_{t}^{\varepsilon} |^{2\zeta_2})+C_p(1+|Y_{t}^{\varepsilon} |^{p\zeta_2})
			\right]  \\ \nonumber	
			\leq\!\!\!\!\!\!\!\!\!\!&& -\frac{\gamma p}{\varepsilon}\mathbb{E}\big|Y_{t}^{\varepsilon } \big|^{p}+ \frac{C_p}{\varepsilon},
		\end{eqnarray*}
		where $\gamma\in(0,\lambda)$. Thus, by comparison theorem, we have
		\begin{eqnarray*}
			\mathbb{E}\left|Y_t^{\varepsilon }\right|^{p}\leq e^{-\frac{\gamma p t}{\vare}}|y|^p+\frac{C_p}{\vare}\int^t_0 e^{-\frac{\gamma p (t-s)}{\vare}}ds,
		\end{eqnarray*}
		which implies
		\begin{eqnarray}
			\sup_{t\ge 0}\mathbb{E}\left|Y_t^{\varepsilon }\right|^{p}\leq e^{-\frac{\gamma p t}{\vare}}|y|^p+C_p\leq C_{p}(1+|y|^{p}).	  \label{Y1}
		\end{eqnarray}
		
		According to It\^{o}'s formula again, it is easy to see
		\begin{eqnarray}
			|X_t^{\varepsilon}|^{p}=\!\!\!\!\!\!\!\!\!\!&&|x|^{p}+p\int_{0}^{t}|X_s^{\varepsilon }|^{p-2}\left \langle X_{s}^{\varepsilon },b(X_{s}^{\varepsilon },Y_{s}^{\varepsilon })\right \rangle ds \nonumber \\
			&&+\int_{0}^{t}\left[\frac{p(p\!-\!2)}{2}|X_s^{\varepsilon }|^{p-4}|\sigma^{*}(X_{s}^{\varepsilon },Y_{s}^{\varepsilon } )\cdot X_{s}^{\varepsilon } |^{2}+\frac{p}{2}|X_s^{\varepsilon }|^{p-2}\left\|\sigma(X_s^{\varepsilon },Y_{s}^{\varepsilon })\right\|^{2}\right]ds \nonumber \\
			&&+\int_0^t\int_{\mathcal{Z}_1} \left[\big|X_s^{\varepsilon}\!+\!h_1(X_{s}^{\varepsilon},z)\big|^{p}\!-\!|X_s^{\varepsilon}|^{p}\!-\!p|X_s^{\varepsilon }|^{p-2}\langle X_{s}^{\varepsilon}, h_1(X_{s}^{\varepsilon},z) \rangle\right] \nu_1(dz)ds \nonumber \\
			&&+p\int_{0}^{t}|X_s^{\varepsilon }|^{p-2}\left \langle X_{s}^{\varepsilon },\sigma(X_{s}^{\varepsilon }, Y_{s}^{\varepsilon })dW_s^{1}\right \rangle \nonumber \\
			&&+\int_0^t\int_{\mathcal{Z}_1} \left[  |X_{s-}^{\varepsilon}+h_1(X_{s-}^{\varepsilon},z)|^{p}-|X_{s-}^{\varepsilon}|^{p}\right]\tilde{N}^{1}(dz,ds) \nonumber \\
			=:\!\!\!\!\!\!\!\!\!\!&&|x|^{p}+\sum_{i=1}^{5}I_i(t). \label{X1}
		\end{eqnarray}
		
		For the term $I_1(t)$. By (\ref{ConA2}), we get that
		\begin{eqnarray}
			\EE\left(\sup_{0\leq t\leq T}|I_1(t)|\right)\le\!\!\!\!\!\!\!\!\!\!&& C_p\EE\int_{0}^{T}|X_s^{\varepsilon }|^{p-2}(1+|X_s^{\varepsilon }|^{2}+|Y_s^{\varepsilon }|^{q})ds  \nonumber  \\
			&& \le C_{p}\int_{0}^{T}(1+\EE|X_s^{\varepsilon }|^{p}+\EE|Y_s^{\varepsilon }|^{\frac{pq}{2}}) ds, \label{X2}
		\end{eqnarray}
		
		For the term $I_2(t)$. Note that (\ref{ConA113}) implies
\begin{eqnarray}
\|\sigma(x,y)\|\leq C(1+|x|+|y|^{k+1}),\label{sigmaP}
\end{eqnarray}
 thus we have
		\begin{eqnarray}
			\EE\left(\sup_{0\leq t\leq T}|I_2(t)|\right)\le\!\!\!\!\!\!\!\!\!\!&&C_p \EE\int_{0}^{T}|X_s^{\varepsilon }|^{p-2}\left\|\sigma(X_s^{\varepsilon }, Y_s^{\varepsilon })\right\|^{2}ds \nonumber \\
			\le\!\!\!\!\!\!\!\!\!\!&&C_p \EE\int_{0}^{T}|X_s^{\varepsilon }|^{p-2}\left(1+|X_s^{\varepsilon }|^2+|Y_s^{\varepsilon }|^{2(k+1)}\right)ds \nonumber \\
			\le\!\!\!\!\!\!\!\!\!\!&& C_{p}\int_{0}^{T}(1+\EE|X_s^{\varepsilon }|^{p}+\EE|Y_s^{\varepsilon }|^{(k+1)p}) ds\nonumber\\
			\le\!\!\!\!\!\!\!\!\!\!&& C_{p,T}\left(1+|y|^{(k+1)p}\right)+C_p\int^T_0 \EE|X_s^{\varepsilon }|^{p} ds.\label{X3}
		\end{eqnarray}
		
		For the term $I_3(t)$. Using \eref{Taylor} and (\ref{ConA32}),
		\begin{eqnarray}
			\EE\left(\sup_{0\leq t\leq T}|I_3(t)|\right)\le\!\!\!\!\!\!\!\!\!\!&& C_{p}\EE\int_{0}^{T}\int_{\mathcal{Z}_1} \left[|X_s^{\varepsilon }|^{p-2}|h_1(X_{s}^{\varepsilon},z)|^{2}+|h_1(X_{s}^{\varepsilon},z)|^{p} \right]\nu_1(dz)ds \nonumber \\
			&& \le C_{p}\int_{0}^{T}(1+\EE|X_s^{\varepsilon }|^{p}) ds. \label{X4}
		\end{eqnarray}
		
For the term $I_4(t)$. By Burkholder-Davis-Gundy's inequality (see \cite[Theorem 3.49]{PZ2007}), \eref{sigmaP} and Young's inequality, we have
		\begin{eqnarray}
			\mathbb{E}\left(\sup_{0\leq t\leq T}|I_4(t)|\right)\leq\!\!\!\!\!\!\!\!\!\!&& C_p\mathbb{E}\left[\int_{0}^{T}\big|X_s^{\varepsilon }\big|^{2p-2}\left\|\sigma(X_s^{\varepsilon }, Y_s^{\varepsilon })\right\|^{2}ds \right]^{\frac{1}{2}}\nonumber \\
			\leq\!\!\!\!\!\!\!\!\!\!&&C_p\mathbb{E}\left[\left(\sup_{0\leq s\leq T}|X_s^{\varepsilon }|^{p}\right)\left(\int_{0}^{T}|X_s^{\varepsilon }|^{p-2}(1+|X_s^{\varepsilon }|^{2}+|Y_s^{\varepsilon }|^{2(k+1)})ds\right) \right]^{\frac{1}{2}}  \nonumber \\
			\leq\!\!\!\!\!\!\!\!\!\!&&\frac{1}{3}\mathbb{E}\left(\sup_{0\leq t\leq T}|X_t^{\varepsilon }|^{p}\right)+ C_p\int_{0}^{T}(1+\mathbb{E}|X_s^{\varepsilon }|^{p}+\mathbb{E}|Y_s^{\varepsilon }|^{p(k+1)})ds.  \label{X5}
		\end{eqnarray}

		For the term $I_5(t)$. Note that  using Taylor's formula on $F\in C^1(\mathbb{R}^n)$, it follows for any $x=(x_1,\ldots,x_n)$ and $h=(h_1,\ldots,h_n)$, there exists $\xi\in (0,1)$ such that
\begin{eqnarray*}
			F(x+h)-F(x)=\!\!\!\!\!\!\!\!\!\!&&\sum^n_{i=1}\partial_{x_i}F(x+\xi h)h_i,
 \end{eqnarray*}
 thus we have for any $a=(a_1,\ldots,a_n)$ and $b=(b_1,\ldots,b_n)$, there exists $\xi\in (0,1)$ such that
\begin{eqnarray}
			\left||a+b|^{p}-|a|^{p}\right|=\!\!\!\!\!\!\!\!\!\!&&\left|p|a+\xi b|^{p-2}\sum^n_{i=1}(a_i+b_i\xi) b_i\right|\nonumber\\
\leq\!\!\!\!\!\!\!\!\!\!&&p|a+\xi b|^{p-1}|b|\nonumber\\
\leq\!\!\!\!\!\!\!\!\!\!&&p2^{p-2}\left(|a|^{p-1}|b|+|b|^{p}\right). \label{Taylor2}
	\end{eqnarray}	
By Burkholder-Davis-Gundy's inequality (see \cite[Theorem 3.50]{PZ2007}), \eref{Taylor2},  Young's inequality and (\ref{ConA32}), we have
		\begin{eqnarray}
			\mathbb{E}\left(\sup_{0\leq t\leq T}|I_5(t)|\right)\leq\!\!\!\!\!\!\!\!&& C\mathbb{E}\left[\int_{0}^{T}\int_{\mathcal{Z}_1}\left(|X_{s-}^{\varepsilon}+h_{1}\left(X_{s-}^{\varepsilon},z\right)|^{p}-|X_{s-}^{\varepsilon}|^{p}\right)^{2}N^1(dz,ds)\right]^{\frac{1}{2}}\nonumber\\
			\leq\!\!\!\!\!\!\!\!&& C_p\mathbb{E}\left[\int_{0}^{T}\int_{\mathcal{Z}_1}\left[|X_{s-}^{\varepsilon}|^{2p-2}|h_{1}(X_{s-}^{\varepsilon},z)|^{2}+|h_{1}(X_{s-}^{\varepsilon},z)|^{2p}\right]N^1(dz,ds)\right]^{\frac{1}{2}}
			\nonumber\\
			\leq\!\!\!\!\!\!\!\!&& C_p\mathbb{E}\left[\left(\sup_{0\leq t\leq T}|X_{t}^{\varepsilon}|^{p}\right)\int_{0}^{T}\int_{\mathcal{Z}_1}|X_{s-}^{\varepsilon}|^{p-2}|h_{1}(X_{s-}^{\varepsilon},z)|^{2}N^1(dz,ds)\right]^{\frac{1}{2}}\nonumber\\
			&&+C_p\mathbb{E}\left[\int_{0}^{T}\int_{\mathcal{Z}_1}|h_{1}(X_{s-}^{\varepsilon},z)|^{2p}N^1(dz,ds)\right]^{\frac{1}{2}}
			\nonumber\\
			\leq\!\!\!\!\!\!\!\!&&\frac{1}{3}\mathbb{E}\left(\sup_{0\leq t\leq T}|X_{t}^{\varepsilon}|^{p}\right)+C_{p}\mathbb{E}\int_{0}^{T}\int_{\mathcal{Z}_1}|X_{s}^{\varepsilon}|^{p-2}|h_{1}(X_{s}^{\varepsilon},z)|^{2}\nu_1(dz)ds\nonumber\\
			&&+C_p\mathbb{E}\int_{0}^{T}\int_{\mathcal{Z}_1}|h_{1}(X_{s}^{\varepsilon},z)|^{p}\nu_1(dz)ds\nonumber\\
			\leq\!\!\!\!\!\!\!\!&&\frac{1}{3}\mathbb{E}\left(\sup_{0\leq t\leq T}|X_{t}^{\varepsilon}|^{p}\right)+C_{p}\mathbb{E}\int_{0}^{T}|X_{t}^{\varepsilon}|^{p}dt+C_{p,T},\label{X6}
		\end{eqnarray}
where the fourth inequality comes from a fact in the proof of \cite[Lemma 8.22]{PZ2007}, i.e., for any $\theta\in (0,1]$,
   	\begin{eqnarray*}
		\left[\int^T_0\int_{\mathcal{Z}_1}|h_{1}(X_{s}^{\varepsilon},z)|^{2p} N^1(dz,ds)\right]^{\theta}\leq \int^T_0\int_{\mathcal{Z}_1}|h_{1}(X_{s}^{\varepsilon},z)|^{2p\theta}N^1(dz,ds).
	\end{eqnarray*}
		
		Combining (\ref{Y}), (\ref{X2})-(\ref{X6}),we get that
		\vspace{0.1cm}
		\begin{eqnarray*}
			\mathbb{E}\left(\sup_{0\leq t\leq T}| X_t^{\varepsilon }|^{p}\right)\leq\!\!\!\!\!\!\!\!\!\!&& C_{p,T}(1\!+\!|x|^{p})\!+\!C_p\!\int_{0}^{T}\!\!\mathbb{E}|X_t^{\varepsilon}|^{p}dt+C_p\!\!\int_{0}^{T}\!\!\mathbb{E}(|Y_t^{\varepsilon }|^\frac{{pq}}{2}+|Y_t^{\varepsilon }|^{(k+1)p})dt \nonumber \\
			\!\!\!\!\!\!\!\!\!\!&&\leq C_{p,T}(1\!+\!|x|^{p}+|y|^{\frac{pq}{2}\vee(k+1)p})\!+\!C_p\!\int_{0}^{T}\!\!\mathbb{E}|X_s^{\varepsilon}|^{p}ds.  \label{X7}
		\end{eqnarray*}
	
		Thus by Gronwall's inequality, we obtain
		\begin{eqnarray*}
			\mathbb{E}\left(\sup_{0\leq t\leq T}| X_t^{\varepsilon }|^{p}\right)\leq C_{p,T}(1\!+\!|x|^{p}+|y|^{ \frac{pq}{2}\vee(k+1)p}).
		\end{eqnarray*}
		The proof is complete.
	\end{proof}

	\vspace{0.1cm}
	\begin{lemma} \label{PMY2}
		For any $p\geq 4$ and $T>0$, there exists positive constant $C_p$ such that for $\vare\in (0,T]$,
		\begin{eqnarray}	
			\mathbb{E}\left(\sup_{0\leq t\leq T}|Y_{t}^{\varepsilon}|^p\right)\le\frac{C_{p}(1+|y|^p)T}{\varepsilon}.    \label{Y.1}
		\end{eqnarray}
	\end{lemma}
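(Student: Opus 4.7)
The plan is to apply It\^o's formula to $|Y_t^{\vare}|^p$ along the fast trajectory, and exploit the prefactor $1/\vare$ attached to the drift together with the uniform-in-time moment bound \eref{Y} already established in Lemma \ref{PMY}. Writing schematically
\begin{eqnarray*}
|Y_t^{\vare}|^p = |y|^p + \frac{1}{\vare}\int_0^t A_s\, ds + M_t^W + M_t^N,
\end{eqnarray*}
where $A_s$ gathers the drift and the quadratic-variation/jump-compensator contributions from It\^o's expansion, $M_t^W$ is the Brownian stochastic integral (with a $1/\sqrt{\vare}$ prefactor inside), and $M_t^N$ is the compensated Poisson integral (with compensator $\vare^{-1}\nu_2(dz)ds$). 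The goal is to show that every piece is at most of order $T/\vare$.

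For the drift integrand $A_s$, the coercivity condition \eref{ConB2} gives $p|Y_s^{\vare}|^{p-2}\langle Y_s^{\vare}, f(X_s^{\vare},Y_s^{\vare})\rangle\leq -\lambda p|Y_s^{\vare}|^p+C_p|Y_s^{\vare}|^{p-2}$; the two Brownian quadratic-variation terms are bounded by $C_p|Y_s^{\vare}|^{p-2}\|g\|^2\leq C_p(1+|Y_s^{\vare}|^{p-2+2\zeta_1})$ via \eref{ConB32}; and Taylor's formula together with \eref{ConB32} handles the jump compensator by $C_p|Y_s^{\vare}|^{p-2}(1+|Y_s^{\vare}|^{2\zeta_2})+C_p(1+|Y_s^{\vare}|^{p\zeta_2})$. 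Since $\zeta_1,\zeta_2\in[0,1)$, every one of these is dominated by $C_p(1+|Y_s^{\vare}|^p)$, so $|A_s|\leq C_p(1+|Y_s^{\vare}|^p)$. Combining with \eref{Y} yields
\begin{eqnarray*}
\frac{1}{\vare}\EE\int_0^T|A_s|\,ds\leq \frac{C_pT(1+|y|^p)}{\vare},
\end{eqnarray*}
which is exactly the target order.

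For the martingale parts I would use BDG coupled with a Young-type absorption. For the Brownian martingale, BDG gives $\EE\sup_{t\leq T}|M_t^W|\leq C_p\EE\bigl[\vare^{-1}\int_0^T|Y_s^{\vare}|^{2p-2}\|g\|^2ds\bigr]^{1/2}$; splitting $|Y_s^{\vare}|^{2p-2}\|g\|^2\leq C_p|Y_s^{\vare}|^p\cdot|Y_s^{\vare}|^{p-2}(1+|Y_s^{\vare}|^{2\zeta_1})$ and applying Cauchy--Schwarz and Young's inequality gives
\begin{eqnarray*}
\EE\sup_{t\leq T}|M_t^W|\leq \frac{1}{4}\EE\sup_{t\leq T}|Y_t^{\vare}|^p+\frac{C_pT(1+|y|^p)}{\vare}.
\end{eqnarray*}
The same scheme applied to $M_t^N$, using the jump-BDG inequality in the form $\EE\sup_{t\leq T}|M_t^N|\leq C_p\EE[\vare^{-1}\int_0^T\int_{\mathcal{Z}_2}(|Y_s^{\vare}+h_2|^p-|Y_s^{\vare}|^p)^2\nu_2(dz)ds]^{1/2}+C_p\EE[\vare^{-1}\int_0^T\int_{\mathcal{Z}_2}|h_2|^{\cdot}\nu_2]$-type terms, leads after the same splitting and Young step to an identical bound. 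Absorbing the two $\tfrac{1}{4}\EE\sup|Y_t^{\vare}|^p$ contributions into the left-hand side closes the estimate.

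The main obstacle will be the Poisson martingale term. Because the compensator carries the $1/\vare$ factor, the naive BDG bound a priori inflates by $1/\vare$, and one must be careful about the choice of BDG exponent (using the small-jump/large-jump split or the $L^p$-variant of BDG for compensated Poisson integrals) so that the jump-size growth from \eref{ConB32}, governed by $\zeta_2<1$, produces a quadratic-variation proxy of the form $|Y_s^{\vare}|^{2p-2}(1+|Y_s^{\vare}|^{2\zeta_2})$ that can be absorbed exactly as in the Brownian case. The crucial structural ingredients are the strict inequalities $\zeta_1,\zeta_2<1$ from \eref{ConB32}, which make every quadratic-variation integrand dominated by $1+|Y_s^{\vare}|^p$ (so that \eref{Y} applies directly), and the a priori bound \eref{Y}, which converts the time integral into a linear-in-$T$ contribution.
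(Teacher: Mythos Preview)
Your approach is correct and is essentially the ``unrolled'' version of the paper's argument. The paper, however, organises the computation differently: it introduces the time-rescaled process $\tilde Y^{\vare}_t:=Y^{\vare}_{t\vare}$, observes that $\tilde Y^{\vare}$ solves an SDE \emph{without} the $1/\vare$ and $1/\sqrt{\vare}$ prefactors (driven by the rescaled noises $\tilde W^2_t=\vare^{-1/2}W^2_{t\vare}$ and $\tilde N^2$ with L\'evy measure $\nu_2$), and then proves the $\vare$-free estimate $\EE\sup_{0\le t\le T}|\tilde Y^{\vare}_t|^p\le C_p(1+|y|^p)T$ for $T\ge1$ by exactly the It\^o/BDG/absorption scheme you describe. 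The desired bound then drops out in one line from $\sup_{0\le t\le T}|Y^{\vare}_t|^p=\sup_{0\le t\le T/\vare}|\tilde Y^{\vare}_t|^p$.

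The two routes are equivalent through the time change, but the paper's rescaling buys cleaner bookkeeping: all the $\vare$-dependence is concentrated in the final substitution $T\mapsto T/\vare$, so one never has to track the $1/\vare$ through the BDG and Young steps separately for the drift, the Brownian martingale, and the Poisson martingale. In particular, the concern you raise at the end---that the Poisson compensator carries a $1/\vare$ and one must be careful with the BDG exponent---simply disappears after rescaling, because $\tilde N^2$ has L\'evy measure $\nu_2$ with no $\vare$. Your direct approach works too (the Young splitting you describe does produce exactly $C_pT(1+|y|^p)/\vare$ for each martingale term), but it requires the separate $\vare$-accounting that the rescaling avoids.
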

	\begin{proof}
		Denote $\tilde{Y}_{t}^{\vare}:=Y_{ t\varepsilon}^{\vare}$, then it is easy to check that $\tilde{Y}^{\vare}$  satisfies the following equation:
		\begin{eqnarray*}
			\tilde{Y}^{\varepsilon}_{t}=\!\!\!\!\!\!\!\!&&y+\frac{1}{\varepsilon}\int^{t\varepsilon}_{0}f(X_{s}^{\varepsilon},Y^{\varepsilon}_s) ds
			+\frac{1}{\sqrt{\varepsilon}}\int^{t\varepsilon}_{0} g(X_{s}^{\varepsilon},Y^{\varepsilon}_s)dW^2_s+\int^{t\varepsilon}_{0}\int_{\mathcal{Z}_2} h_{2}(X_{s-}^{\varepsilon}, Y^{\varepsilon}_{s-},z)\tilde{N}^{2,\vare}(ds,dz)\nonumber\\
			=\!\!\!\!\!\!\!\!&&y+\int^{t}_{0} f(X_{s\vare}^{\varepsilon},\tilde{Y}^{\varepsilon}_s) ds+\int^{t}_{0} g(X_{s\vare}^{\varepsilon},\tilde{Y}^{\varepsilon}_s)d\tilde{W}^{2}_s
			+\int^{t}_{0}\int_{\mathcal{Z}_2}h_{2}(X_{s\vare-}^{\varepsilon},\tilde{Y}^{\varepsilon}_{s-},z)\tilde{N}^{2}(ds,dz),
		\end{eqnarray*}
		where $\tilde{W}_t^2:=\frac{1}{\sqrt{\vare}}W^2_{t\vare}$ that coincides in law with $W^2_t$, $\tilde{N}^{2}(ds,dz)$ is also a compensated Poisson random measure with L\'evy measure $\nu_2$, which is constructed by the Poisson point process $\tilde p^{2,\vare}_t:=p^{2,\vare}_{t\vare}$ with $t\in \tilde{D}:=\frac{1}{\vare} D_{2}=\{t>0, t\vare\in D_{2} \}$.
		
		By It\^{o} formula, for any $p\geq 4$, we have
		\begin{eqnarray}
			 |\tilde{Y}_t^\varepsilon|^p=\!\!\!\!\!\!&&|y|^p+p\int_0^t|\tilde{Y}_s^\varepsilon|^{p-2}\left\langle\tilde{Y}_s^\varepsilon,f(X_{s\varepsilon}^{\varepsilon},\tilde{Y}_{s}^{\varepsilon})\right\rangle ds+p\int_0^t|\tilde{Y}_s^\varepsilon|^{p-2}\left\langle\tilde{Y}_s^\varepsilon,g(X_{s\varepsilon}^{\varepsilon},\tilde{Y}_{s}^{\varepsilon})d\tilde{W}_s^2\right\rangle \nonumber \\ &&+{\frac{p(p-2)}{2}}\int_0^t|\tilde{Y}_s^\varepsilon|^{p-4}| g^{*}(X_{s\varepsilon}^{\varepsilon},\tilde{Y}_{s}^{\varepsilon})\cdot \tilde{Y}_s^{\varepsilon} |^2ds+{\frac{p}{2}}\int_0^t|\tilde{Y}_s^\varepsilon|^{p-2}\|g(X_{s\varepsilon}^{\varepsilon},\tilde{Y}_{s}^{\varepsilon})\|^2ds  \nonumber \\
			 &&+\int_0^t\!\int_{\mathcal{Z}_2}\left[|\tilde{Y}_{s}^{\varepsilon}+h_2(X_{s\varepsilon}^{\varepsilon},\tilde{Y}_{s}^{\varepsilon},z)|^p\!-\!|\tilde{Y}_{s}^{\varepsilon}|^p\!-\!p|\tilde{Y}_{s}^{\varepsilon}|^{p-2}\left\langle\tilde{Y}_s^\varepsilon,h_2(X_{s\varepsilon}^{\varepsilon},\tilde{Y}_{s}^{\varepsilon},z)\right\rangle\right]\nu_2(dz)ds \nonumber \\
			 &&+\int_0^t\int_{\mathcal{Z}_2}\left[|\tilde{Y}_{s-}^{\varepsilon}+h_2(X_{s\varepsilon-}^{\varepsilon},\tilde{Y}_{s-}^{\varepsilon},z)|^p-|\tilde{Y}_{s-}^{\varepsilon}|^p\right]\tilde{N}^2(ds,dz).  \label{Y.2}
		\end{eqnarray}
		
		By the same argument as in the proof of \eref{Y1}, we can easily obtain that
		\begin{eqnarray}
			\sup_{t\ge0}\mathbb{E}|\tilde{Y}_{t}^{\varepsilon}|^p\le C_{p}(1+|y|^p). \label{Y.3}
		\end{eqnarray}
		
		Using (\ref{Y.2}), (\ref{ConB2}), (\ref{ConB32})  and Burkholder-Davis-Gundy's inequality (see \cite[Theorem 3.50]{PZ2007} ),  following the same argument as in the proof of \eref{X5} and \eref{X6},  we have
		\begin{eqnarray*}
			\EE\left(\sup_{0\leq t\leq T}|\tilde{Y}_{t}^{\varepsilon}|^p\right)\leq\!\!\!\!\!\!&&|y|^p+C_{p}T+C_p\int^T_0 \EE(1+|\tilde{Y}_{t}^{\varepsilon}|^p)dt\\
			&&+C_{p}\EE\left[\sup_{0\leq t\leq T}\left|\int_0^t |\tilde{Y}_{s}^{\varepsilon}|^{p-2}\left\langle\tilde{Y}_s^\varepsilon,g(X_{s\varepsilon}^{\varepsilon},\tilde{Y}_{s}^{\varepsilon})d\tilde{W}_s^2\right\rangle\right| \right]\\
			&&+C_{p}\EE\left[\sup_{0\leq t\leq T}\Big|\int_0^t\int_{\mathcal{Z}_2}\left(\left|\tilde{Y}_{s-}^{\varepsilon}+h_2(X_{s\varepsilon-}^{\varepsilon},\tilde{Y}_{s-}^{\varepsilon},z)\right|^p-\left|\tilde{Y}_{s-}^{\varepsilon}\right|^p\right)\tilde{N}^2(ds,dz)\Big|\right]\\
			\leq\!\!\!\!\!\!&&|y|^p+C_{p}T+C_p\int^T_0 \EE(1+|\tilde{Y}_{t}^{\varepsilon}|^p)dt\\
			&&+C_{p}\EE\left[\int_0^T |\tilde{Y}_{s}^{\varepsilon}|^{2p-2}\|g(X_{s\varepsilon}^{\varepsilon},\tilde{Y}_{s}^{\varepsilon})\|^2 ds\right]^{1/2}\\
			 &&+C_{p}\EE\left[\int_0^T\int_{\mathcal{Z}_2}\left(\left|\tilde{Y}_{s-}^{\varepsilon}+h_2(X_{s\varepsilon-}^{\varepsilon},\tilde{Y}_{s-}^{\varepsilon},z)\right|^p-\left|\tilde{Y}_{s-}^{\varepsilon}\right|^p\right)^2 N^2(ds,dz)\right]^{1/2}\\
			\leq\!\!\!\!\!\!&&\frac{1}{2}\EE\left(\sup_{0\leq t\leq T}|\tilde{Y}_{t}^{\varepsilon}|^p\right)+|y|^p+C_{p}T+C_p\int^T_0 \EE(1+|\tilde{Y}_{t}^{\varepsilon}|^p)dt,
		\end{eqnarray*}
		which together with \eref{Y.3}, we finally have that for any $T\ge1$,
		\begin{eqnarray*}
			\EE\left(\sup_{0\leq t\leq T}|\tilde{Y}_{t}^{\varepsilon}|^p\right)\leq C_p(1+|y|^p)T.
		\end{eqnarray*}
		
		Hence, it follows that for any $T>0$ and $\varepsilon\in (0,T]$,
		\begin{eqnarray*}
			\mathbb{E}\left(\sup_{0\leq t\leq T}|Y_{t}^{\varepsilon}|^p\right)=\mathbb{E}\left(\sup_{0\leq t\leq\frac{T}{\varepsilon}}|\tilde{Y}_{t}^{\varepsilon}|^p\right)\le\frac{C_p(1+|y|^p)T}{\varepsilon}.
		\end{eqnarray*}
		The proof is complete.
	\end{proof}

	\subsection{The frozen equation}
	
	For fixed $x\in \RR^n$, recall the frozen equation:
	\begin{equation}\left\{\begin{array}{l} \label{FE}
			\displaystyle
			dY_{t}=f(x,Y_{t})dt+g(x,Y_{t})d \tilde{W}_{t}^{2}+\int_{\mathcal{Z}_2}h_{2}(x,Y_{t-},z)\tilde{N}^{2}(dz,dt),\\
			Y_{0}=y\in\RR^m.\\
		\end{array}\right.
	\end{equation}
	Under the assumptions \ref{B1}-\ref{B3}, it is easy to check that \eref{FE} admits a unique solution $\{Y^{x,y}_t\}_{t\geq 0}$. Moreover, following the same steps as in the proof of \eref{Y1}, we can easily obtain that for any $p>0$, there exist $\gamma_p,C_p>0$ such that
	\begin{eqnarray}
		\mathbb{E}|Y_{t}^{x,y}|^p\leq e^{-\gamma_p t}|y|^p+C_p,\quad \forall t\geq 0.\label{FroY}
	\end{eqnarray}
	
	\begin{lemma}\label{L3.3}
		There exists $\gamma>0$ such that for any $t \ge 0$, $x_1,x_2\in\RR^{n}$ and $y_1,y_2\in\RR^{m}$, we have
		\begin{eqnarray}
			&&\mathbb{E}|Y_t^{x_1,y_1}-Y_t^{x_2,y_2}|^{\ell}\le e^{-\gamma t }|y_1-y_2|^{\ell}+C|x_1-x_2|^{\ell},  \label{FR1}
		\end{eqnarray}
		where $\ell$ is the existing constant in condition \eref{ConB1}.
	\end{lemma}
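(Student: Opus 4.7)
Set $Z_t := Y_t^{x_1,y_1} - Y_t^{x_2,y_2}$, which by subtracting the two copies of \eqref{FE} satisfies
\begin{align*}
dZ_t = \Delta f_t\,dt + \Delta g_t\,d\tilde{W}_t^2 + \int_{\mathcal{Z}_2}\Delta h_{2,t}(z)\,\tilde{N}^2(dt,dz), \qquad Z_0 = y_1 - y_2,
\end{align*}
where $\Delta f_t := f(x_1,Y_t^{x_1,y_1}) - f(x_2,Y_t^{x_2,y_2})$ and $\Delta g_t,\Delta h_{2,t}(z)$ are defined analogously. My plan is to apply It\^o's formula to $|Z_t|^{\ell}$ (which is $C^2$ since $\ell>8$) together with a standard localization through $\tau_n := \inf\{t : |Y_t^{x_1,y_1}|\vee|Y_t^{x_2,y_2}| \geq n\}$; the moment bound \eqref{FroY} lets me send $n\to\infty$ and kill the martingale parts after taking expectation, yielding
\begin{align*}
\EE|Z_t|^{\ell} = |y_1-y_2|^{\ell} + \EE\int_0^t \mathcal{I}(s)\,ds,
\end{align*}
with $\mathcal{I}(s)$ collecting a drift term $\ell|Z_s|^{\ell-2}\langle Z_s,\Delta f_s\rangle$, a diffusion term bounded by $\tfrac{\ell(\ell-1)}{2}|Z_s|^{\ell-2}\|\Delta g_s\|^2$ via $|(\Delta g_s)^{*}Z_s|^2\leq|Z_s|^2\|\Delta g_s\|^2$, and a nonlocal jump term $\int_{\mathcal{Z}_2}\bigl[|Z_s+\Delta h_{2,s}(z)|^{\ell}-|Z_s|^{\ell}-\ell|Z_s|^{\ell-2}\langle Z_s,\Delta h_{2,s}(z)\rangle\bigr]\nu_2(dz)$ that I will treat via a second-order Taylor expansion of $|\cdot|^{\ell}$.

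Using the elementary inequality $(|a|+|b|)^{\ell-2} \leq 2^{\ell-3}(|a|^{\ell-2}+|b|^{\ell-2})$ inside the Taylor remainder and $\int_0^1(1-s)s^{\ell-2}ds = 1/[\ell(\ell-1)]$, the jump contribution is dominated by $\tfrac{\ell(\ell-1)}{2}\cdot 2^{\ell-3}|Z_s|^{\ell-2}\int|\Delta h_{2,s}|^2\nu_2(dz) + 2^{\ell-3}\int|\Delta h_{2,s}|^{\ell}\nu_2(dz)$. Gathering everything (and using $2^{\ell-3}\leq 2^{\ell-2}$), I obtain
\begin{align*}
\mathcal{I}(s) \leq \tfrac{\ell}{2}|Z_s|^{\ell-2}\Bigl[2\langle Z_s,\Delta f_s\rangle + (\ell-1)\|\Delta g_s\|^2 + 2^{\ell-2}(\ell-1)\int_{\mathcal{Z}_2}|\Delta h_{2,s}|^2\nu_2(dz)\Bigr] + 2^{\ell-3}\int_{\mathcal{Z}_2}|\Delta h_{2,s}|^{\ell}\nu_2(dz),
\end{align*}
which is precisely the structure that conditions \eqref{ConB1} and \eqref{ConB11} are engineered to digest. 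Applying \eqref{ConB1} to the bracket and \eqref{ConB11} to the last integral produces $\mathcal{I}(s)\leq -\tfrac{\ell\beta}{2}|Z_s|^{\ell} + \tfrac{C\ell}{2}|Z_s|^{\ell-2}|x_1-x_2|^2 + \tfrac{L_{h_2}}{2(\ell-1)}|Z_s|^{\ell} + C|x_1-x_2|^{\ell}$, and Young's inequality $|Z_s|^{\ell-2}|x_1-x_2|^2\leq\delta|Z_s|^{\ell}+C_{\delta}|x_1-x_2|^{\ell}$ with $\delta$ small then yields
\begin{align*}
\frac{d}{dt}\EE|Z_t|^{\ell} \leq -\gamma\,\EE|Z_t|^{\ell} + C|x_1-x_2|^{\ell}
\end{align*}
for some $\gamma>0$; the positivity of $\gamma$ relies crucially on the assumption $L_{h_2}\in[0,\beta)$. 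A comparison argument (or direct Gronwall) then delivers $\EE|Z_t|^{\ell}\leq e^{-\gamma t}|y_1-y_2|^{\ell}+(C/\gamma)|x_1-x_2|^{\ell}$, which is the claim.

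The main delicate point is the jump term: one must perform the It\^o--Taylor expansion of $|Z_s+\Delta h_{2,s}|^{\ell}$ carefully so that the numerical coefficients generated line up with the specific factors $(\ell-1)$ and $2^{\ell-2}(\ell-1)$ appearing in conditions B1---these were evidently tuned by the authors precisely so that the bracket on the right hand side above is the LHS of \eqref{ConB1} and the remaining $\int|\Delta h_{2,s}|^{\ell}\nu_2$ term is accessible through \eqref{ConB11}, with a small enough spare factor to absorb the $L_{h_2}|Z_s|^{\ell}/[2(\ell-1)]$ into the coercive $-\tfrac{\ell\beta}{2}|Z_s|^{\ell}$ whenever $L_{h_2}<\beta$.
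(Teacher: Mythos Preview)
Your proof is correct and follows essentially the same route as the paper: It\^o's formula for $|Z_t|^{\ell}$, a second-order Taylor bound on the jump remainder producing exactly the combination in \eqref{ConB1}, then \eqref{ConB11} plus Young's inequality and Gronwall. Your integral-remainder trick with $\int_0^1(1-s)s^{\ell-2}\,ds = 1/[\ell(\ell-1)]$ is in fact a slight sharpening of the paper's cruder Lagrange bound (the paper carries the constant $2^{\ell-3}\ell(\ell-1)$ rather than your $2^{\ell-3}$ on the $|\Delta h|^{\ell}$ term), and your explicit localization is something the paper leaves implicit; otherwise the arguments coincide.
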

	\begin{proof}
		For any $t \ge 0$, $x_i\in\RR^{n}$ and $y_i\in\RR^{m}$, $i=1, 2$, note that
		\begin{eqnarray*}
			d(Y^{x_1,y_1}_t-Y^{x_2,y_2}_t)=\!\!\!\!\!\!&&\left[f(x_1, Y^{x_1,y_1}_t)-f(x_2, Y^{x_2,y_2}_t)\right]dt+\big[g(x_1, Y^{x_1,y_1}_t)-g(x_2, Y^{x_2,y_2}_t)\big]d\tilde{W}_{t}^{2}\\
			&&+\int_{\mathcal{Z}_2} \big[h_{2}(x_1,Y_{t-}^{x_1,y_1},z) -h_{2}(x_2,Y_{t-}^{x_2,y_2},z)\big]\tilde{N}^{2}(dz,dt),
		\end{eqnarray*}
		with $Y^{x_1,y_1}_0\!-\!Y^{x_2,y_2}_0=y_1-y_2$.
		
		Using It\^o's formula and taking expectation on both sides, we get
		\begin{eqnarray*}
			&&\mathbb{E}\big|Y_t^{x_1,y_1}-Y_t^{x_2,y_2}\big|^{\ell} \\
			=\!\!\!\!\!\!\!\!\!\!&&|y_1-y_2|^{\ell}\!+\!\ell\mathbb{E}\!\int_{0}^{t}\big|Y_s^{x_1,y_1}-Y_s^{x_2,y_2}\big|^{\ell-2}\left\langle Y^{x_1,y_1}_s\!-\!Y^{x_2,y_2}_s, f(x_1, Y^{x_1,y_1}_s)\!-\!f(x_2,Y^{x_2,y_2}_s)\right\rangle ds\\
			&& +\frac{\ell(\ell-2)}{2}\mathbb{E}\int_{0}^{t} \big|Y_s^{x_1,y_1}-Y_s^{x_2,y_2}\big|^{\ell-4}\left |(g(x_1, Y^{x_1,y_1}_s)-g(x_2, Y^{x_2,y_2}_s))^{\ast} (Y_s^{x_1,y_1}-Y_s^{x_2,y_2})\right|^2 ds\\
			&& +\frac{\ell}{2}\mathbb{E}\int_{0}^{t} \big|Y_s^{x_1,y_1}-Y_s^{x_2,y_2}\big|^{\ell-2}\left \|g(x_1, Y^{x_1,y_1}_s)-g(x_2, Y^{x_2,y_2}_s) \right\|^2 ds\\
			&& +\mathbb{E}\!\int_{0}^{t}\!\int_{\mathcal{Z}_2}\Big[ |Y_s^{x_1,y_1}-Y_s^{x_2,y_2}+h_{2}(x_1,Y_{s}^{x_1,y_1},z)-h_{2}(x_2,Y_{s}^{x_2,y_2},z)|^{\ell}-|Y_s^{x_1,y_1}-Y_s^{x_2,y_2}|^{\ell}\\
			&&\quad-{\ell}|Y_s^{x_1,y_1}-Y_s^{x_2,y_2}|^{\ell-2}\langle Y_s^{x_1,y_1}-Y_s^{x_2,y_2} , h_{2}(x_1,Y_{s}^{x_1,y_1},z)-h_{2}(x_2,Y_{s}^{x_2,y_2},z)\rangle\Big]\nu_2(dz)ds.
		\end{eqnarray*}
Then by  \eref{Taylor}, (\ref{ConB1}), \eref{ConB11} and Young's inequality
		\begin{eqnarray*}
			&&\frac{d}{dt}\mathbb{E}\big|Y_t^{x_1,y_1}-Y_t^{x_2,y_2}\big|^{\ell}\\
			\leq\!\!\!\!\!\!\!\!\!\!&&{\ell}\mathbb{E}\left[\big|Y_t^{x_1,y_1}-Y_t^{x_2,y_2}\big|^{\ell-2}\left\langle Y^{x_1,y_1}_t\!-\!Y^{x_2,y_2}_t, f(x_1, Y^{x_1,y_1}_t)\!-\!f(x_2,Y^{x_2,y_2}_t)\right\rangle\right]\\
			&& +\frac{{\ell}({\ell}-1)}{2}\mathbb{E}\left[\big|Y_t^{x_1,y_1}-Y_t^{x_2,y_2}\big|^{\ell-2}\left \|g(x_1, Y^{x_1,y_1}_t)-g(x_2, Y^{x_2,y_2}_t) \right\|^2\right]\\
			&& +2^{\ell-4}\ell(\ell-1)\mathbb{E}\Big[|Y_t^{x_1,y_1}-Y_t^{x_2,y_2}|^{\ell-2} \int_{\mathcal{Z}_2}|h_{2}(x_1,Y_{t}^{x_1,y_1},z)-h_{2}(x_2,Y_{t}^{x_2,y_2},z)|^2\nu_2(dz)\Big]\\
			&&+2^{\ell-4}\ell(\ell-1)\mathbb{E} \int_{\mathcal{Z}_2}|h_{2}(x_1,Y_{t}^{x_1,y_1},z)-h_{2}(x_2,Y_{t}^{x_2,y_2},z)|^{\ell}\nu_2(dz)\\
			\leq\!\!\!\!\!\!\!\!\!\!&&\mathbb{E}\left[\big|Y_t^{x_1,y_1}-Y_t^{x_2,y_2}\big|^{\ell-2}(-\frac{\ell\beta}{2}\big|Y_t^{x_1,y_1}-Y_t^{x_2,y_2}\big|^2+C|x_1-x_2|^2)\right]\\
			&&\quad\quad+\frac{\ell L_{h_2}}{2}\EE\big|Y_t^{x_1,y_1}-Y_t^{x_2,y_2}\big|^{\ell}+C|x_1-x_2|^{\ell}\\
			\leq\!\!\!\!\!\!\!\!\!\!&&-\frac{\ell(\beta-L_{h_2})}{4}\mathbb{E}\big|Y_t^{x_1,y_1}-Y_t^{x_2,y_2}\big|^{\ell}+C|x_1-x_2|^{\ell}.
		\end{eqnarray*}
		The comparison theorem yields that
		\begin{eqnarray*}
			\mathbb{E}|Y_t^{x_1,y_1}-Y_t^{x_2,y_2}|^{\ell}\le e^{-\gamma t }|y_1-y_2|^{\ell}+C|x_1-x_2|^{\ell},
		\end{eqnarray*}
		where $\gamma:=\frac{\ell(\beta-L_{h_2})}{4}$. The proof is complete.
	\end{proof}
	
	Denote $P^{x}_t$ be the transition semigroup of $\{Y^{x,y}_t\}_{t\geq 0}$, i.e., for proper measurable function $\varphi$ on $\RR^m$,
	\begin{eqnarray*}
		P^x_t \varphi(y):=\EE\left[\varphi\left(Y_{t}^{x,y}\right)\right], \quad y \in \RR^m,\ \ t>0.
	\end{eqnarray*}
	Then the following exponential ergodicity holds:
	\begin{proposition}\label{Ergodicity}
		Under the assumptions \ref{B1}-\ref{B3}. Then $\{P^x_t\}_{t\geq 0}$ admits a unique invariant measure $\mu^x$, and for any $p>0$ there exists $C_p>0$ such that
	\begin{eqnarray}
		\sup_{x\in\RR^n}\int_{\RR^m}|y|^p\mu^x(dy)\leq C_p.\label{F3.17}
	\end{eqnarray}
Moreover, for any $t \ge 0$, $x\in\RR^{n}$, and $y\in\RR^{m}$, we have
		\begin{eqnarray}
			&&\big|\mathbb{E}b(x,Y_t^{x,y})-\bar{b}(x)\big|\le Ce^{-\frac{\gamma t}{\ell}}(1+|y|^{k+1}).   \label{FR2}
		\end{eqnarray}
	\end{proposition}
	\begin{proof}
(i)  We shall use the classical Krylov-Bogoliubov method to prove the existence of {\color{red}an} invariant measure. More precisely, for any $n\in N$, define the Krylov-Bogoliubov measure $$\mu_n^x:=
\frac{1}{n} \int_0^n \delta_0P^x_tdt,~~n\geq 1,$$ where $\delta_0$ is
the Dirac measure at $0$, then each $\mu_n^x$ is a probability measure such that for any bounded measurable function $\varphi$,
$$\int_{\mathbb{R}^n}\varphi (dy)\mu_n^x(dy)=\frac{1}{n}\int_0^nP^x_t\varphi(0)dt.
$$
For any constant $R>0$, define $K_{R}:=\{z\in \mathbb{R}^n:|z|\leq R\}$, which is a compact set in $\RR^n$. Meanwhile, by Chebyshev's inequality and \eref{FroY},
 we obtain
\begin{eqnarray*}
\mu^x_n(K^{c}_{R})\leq
\frac{1}{n R^2}\int_0^n\mathbb{E}|Y_{s}^{x,0}|^2ds\leq\frac{C(1+|x|^2)}{R^2},\quad n\geq1,
\end{eqnarray*}
which implies that $\{\mu^x_n\}_{n\geq 1}$ is tight. Then by Prokhorov's theorem there exists a probability measure $\mu^x$ and a subsequence $(\mu^x_{n_k})_{k\in N}$
 such that $\mu^x_{n_k}\rightarrow\mu^x$ weakly as $k\rightarrow\infty$. Furthermore, it is easy to check that $\mu^x$ is an invariant measure of the transition semigroup $\{P^x_t\}_{t\geq 0}$ by a standard argument.

(ii)  By \eref{FroY} and the definition of invariant measure, we have for any $x\in\RR^n$ and $t>0$,
\begin{eqnarray*}
	\int_{\RR^m}|y|^p\mu^x(dy)=\!\!\!\!\!\!\!\!&&\int_{\RR^m}\EE|Y^{x,y}_t|^p\mu^x(dy)\nonumber\\
\leq \!\!\!\!\!\!\!\!&&\int_{\RR^m}(e^{-\gamma_p t}|y|^p+C_p)\mu^x(dy),
	\end{eqnarray*}	
which implies
\begin{eqnarray*}
	\int_{\RR^m}|y|^p\mu^x(dy)\leq \frac{C_p}{1-e^{-\gamma_p t}}.
	\end{eqnarray*}	
Taking $t\rightarrow \infty$, we get
\begin{eqnarray}
	\int_{\RR^m}|y|^p\mu^x(dy)\leq C_p.\label{F3.172}
	\end{eqnarray}
(iii) For any Lipschitz continuous function $\varphi$, by the definition of invariant measure $\mu^x$, Lemmas \ref{L3.3} and (\ref{F3.172}) , we have for any $t\geq 0$,
\begin{eqnarray*}
\Big|P^x_t\varphi(y)-\int_{\RR^m}\varphi(y')\mu^x(d{y'})\Big|=\!\!\!\!\!\!\!\!&&\Big|\int_{\RR^m} \mathbb{E}\left[\varphi(Y_{t}^{x,y})-\varphi(Y_{t}^{x,{y'}})\right]\mu^x(dy')\Big|
\nonumber\\
\leq\!\!\!\!\!\!\!\!&&\|\varphi\|_{Lip}\int_{\RR^m} \mathbb{E}|Y_{t}^{x,y}-Y_{t}^{x,{y'}}|\mu^x(dy')
\nonumber\\
\leq\!\!\!\!\!\!\!\!&&\|\varphi\|_{Lip}e^{-\frac{\gamma }{\ell} t}\int_{\RR^m} |y-y'|\mu^x(dy')
\nonumber\\
\leq\!\!\!\!\!\!\!\!&&C\|\varphi\|_{Lip}(1+|y|)e^{-\frac{\gamma }{\ell} t},
\end{eqnarray*}
where $\|\varphi\|_{Lip}:=\sup_{x\neq y\in \RR^n}\frac{|\varphi(x)-\varphi(y)|}{|x-y|}$. Now, let $\nu^x$ be another invariant measure, by the same argument above, we have for any $t\geq 0$,
\begin{eqnarray*}
&&\Big|\int_{\RR^m}\varphi(y')\nu^x(d{y'})-\int_{\RR^m}\varphi(y')\mu^x(d{y'})\Big|\\
\leq\!\!\!\!\!\!\!\!&&\Big|P^x_t\varphi(y)-\int_{\RR^m}\varphi(y')\nu^x(d{y'})\Big|+\Big|P^x_t\varphi(y)-\int_{\RR^m}\varphi(y')\mu^x(d{y'})\Big|\\
\leq\!\!\!\!\!\!\!\!&&C\|\varphi\|_{Lip}(1+|y|)e^{-\frac{\gamma }{\ell} t}.
\end{eqnarray*}
Taking $t\rightarrow \infty$ and by the arbitrariness of the $\varphi$, we have $\mu^x=\nu^x$. Hence $\mu^x$ is the unique invariant measure.

Furthermore, by the definition of invariant measure $\mu^x$, \eref{ConA112}, Lemmas \ref{L3.3} and (\ref{F3.172}), we have
		\begin{eqnarray*}
			\big|\mathbb{E}b(x,Y_t^{x,y})-\bar{b}(x)\big|\!\!\!\!\!\!\!\!&&=\left| \mathbb{E}b(x,Y_t^{x,y})-\int_{\RR^{m}}b(x,z)\mu^{x}(dz)\right|\\
			\!\!\!\!\!\!\!\!&&\leq \int_{\RR^{m}}\mathbb{E}\left| b(x,Y^{x,y}_t)-b(x,Y^{x,z}_t)\right|\mu^{x}(dz)\\
			\!\!\!\!\!\!\!\!&&\leq C\int_{\RR^{m}} \big[\mathbb{E}\big(1+|Y^{x,y}_t|^{2k}+|Y^{x,z}_t|^{2k}\big)\big]^{\frac{1}{2}} \big(\mathbb{E}\left| Y^{x,y}_t-Y^{x,z}_t\right|^{2}\big)^{\frac{1}{2}}\mu^{x}(dz)\\
			\!\!\!\!\!\!\!\!&&\leq C e^{-\frac{\gamma t}{\ell}}\int_{\RR^{m}}(1+|y|^{k}+|z|^{k})|y-z|\mu^{x}(dz)\\
			\!\!\!\!\!\!\!\!&&\leq C e^{-\frac{\gamma t}{\ell}}(1+|y|^{k+1}).
		\end{eqnarray*}
The proof is complete.
	\end{proof}
	
	\subsection{The averaged equation}
	
	In the case of $\sigma(x,y)=\sigma(x)$, recall the averaged equation as follows:
	\begin{equation}\left\{\begin{array}{l}
			\displaystyle d\bar{X}_{t}=\bar{b}( \bar{X}_{t})dt+\sigma(\bar X_t)dW_t^1+\int_{\mathcal{Z}_1}h_1(\bar{X}_{t-},z)\tilde{N}^1(dz,dt),\\
			\bar{X}_{0}=x\in \RR^{n},\end{array}\right. \label{AR1}
	\end{equation}
	where $\bar{b}(x)=\int_{\RR^{m}}b(x,z)\mu^{x}(dz)$ and $\mu^{x}$ is the unique invariant measure for the transition semigroup of  equation (\ref{FE}).
	
	\begin{lemma} \label{PMA}
		For any $x\in\RR^{n}$, equation (\ref{AR1}) has a unique solution $\bar{X}_{t}$. Moreover, for any $T>0$ we have
		\vspace{0.1cm}
		\begin{eqnarray}
			\mathbb{E}\left(\sup_{0\leq t\leq T}|\bar{X}_{t}|^{p}\right)\leq C_{p,T}(1+|x|^{p}).\label{AR4}
		\end{eqnarray}
	\end{lemma}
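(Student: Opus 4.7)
The plan is to first verify that the averaged drift $\bar b$ inherits monotonicity, coercivity, and polynomial growth from $b$ through its representation against the invariant measure $\mu^x$, then invoke a standard well-posedness result for SDEs with monotone drift and L\'evy noise, and finally derive the moment bound by It\^o's formula in the same spirit as Lemma \ref{PMY}. The main technical difficulty is that $\mu^x$ depends on $x$, so one cannot simply integrate the pointwise monotonicity estimate \eref{ConA11} against a common measure; instead one represents $\bar b$ through the frozen semigroup and absorbs the $x$-dependence of $\mu^x$ using the contraction in Lemma \ref{L3.3}.

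For monotonicity, I would fix $x_1,x_2\in\RR^n$ together with an auxiliary $y\in\RR^m$. Proposition \ref{Ergodicity} gives $\bar b(x_i)=\lim_{t\to\infty}\EE b(x_i,Y_t^{x_i,y})$, so
$$\langle \bar b(x_1)-\bar b(x_2),x_1-x_2\rangle=\lim_{t\to\infty}\EE\langle b(x_1,Y_t^{x_1,y})-b(x_2,Y_t^{x_2,y}),x_1-x_2\rangle.$$
Splitting the right-hand side by inserting $b(x_2,Y_t^{x_1,y})$, the first piece is bounded by $C|x_1-x_2|^2$ via \eref{ConA11}, while the second piece is bounded using \eref{ConA112}, the Cauchy--Schwarz inequality, the uniform moment estimate \eref{FroY}, and Lemma \ref{L3.3} applied with the common initial value $y_1=y_2=y$ (so the exponentially decaying piece vanishes and only the $C|x_1-x_2|^\ell$ contribution remains). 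This yields $\langle \bar b(x_1)-\bar b(x_2),x_1-x_2\rangle\le C|x_1-x_2|^2$. Integrating the pointwise bound \eref{ConA31} for $i=j=0$ against $\mu^x$ gives $|\bar b(x)|\le C(1+|x|^k)$, and integrating \eref{ConA2} against $\mu^x$, together with the uniform moment bound $\sup_x\int_{\RR^m}|y|^q\mu^x(dy)\le C$ following from \eref{FroY}, yields $2\langle \bar b(x),x\rangle+\|\sigma(x)\|^2\le C(1+|x|^2)$. Combined with the Lipschitz continuity of $\sigma$ inherited from \eref{ConA113} and the conditions \eref{ConA12}--\eref{ConA32} on $h_1$, existence and pathwise uniqueness for \eref{AR1} then follow from \cite[Theorem 2.8]{XZ2019}.

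For the moment bound it is enough to treat $p\ge 2$, the case $p\in[1,2)$ following by Jensen's inequality. Applying It\^o's formula to $|\bar X_t|^p$ and taking suprema produces the same five terms as in \eref{X1} (with $\bar b$ in place of $b$ and no $Y$-dependence). The drift and It\^o-correction terms are controlled by the coercivity $2\langle \bar b(x),x\rangle+\|\sigma(x)\|^2\le C(1+|x|^2)$, which is purely quadratic in $|\bar X_t|$ and carries no fast-component contribution; the Brownian and compensated Poisson stochastic integrals are handled by Burkholder--Davis--Gundy and Young's inequality exactly as in the estimates for $I_4$ and $I_5$ in the proof of Lemma \ref{PMY}; and the jump compensator term is absorbed using \eref{ConA32}. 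Moving the resulting $\tfrac{1}{3}\EE(\sup_{s\le t}|\bar X_s|^p)$ pieces to the left-hand side and applying Gronwall's inequality delivers $\EE(\sup_{0\le t\le T}|\bar X_t|^p)\le C_{p,T}(1+|x|^p)$. The only step requiring genuine thought is the monotonicity of $\bar b$; once that is in hand, both well-posedness and the moment bound are mild adaptations of arguments already carried out for $X^{\vare}$.
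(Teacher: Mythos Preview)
Your proposal is correct and follows essentially the same approach as the paper: represent $\bar b$ through the frozen semigroup, use \eref{ConA11} for the piece with common $y$-argument, and control the remaining piece via \eref{ConA112} together with Lemma \ref{L3.3}, then invoke \cite[Theorem 2.8]{XZ2019} and repeat the It\^o/BDG/Gronwall computation from Lemma \ref{PMY} for the moment bound. The only cosmetic difference is that you pass to the limit $t\to\infty$ at the outset (and obtain coercivity by integrating \eref{ConA2} directly against $\mu^x$), whereas the paper keeps $t$ finite, carries two extra $Ce^{-\gamma t/\ell}|x_1-x_2|$ error terms from Proposition \ref{Ergodicity}, and sends $t\to\infty$ at the end.
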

	\begin{proof}
		By  \ref{A1}, \eref{FroY}, Lemma \ref{L3.3}  and Proposition  \ref{Ergodicity} , for any $t>0$, $x_1,x_2\in\RR^{n}$, we have
		\vspace{0.1cm}
		\begin{eqnarray*}
			&&2\langle \bar{b}(x_1)-\bar{b}(x_2),x_1-x_2\rangle + \|\sigma (x_{1})-\sigma (x_{2})\|^{2}+\int_{\mathcal{Z}_1}\big|h_{1}(x_{1},z)-h_{1}(x_{2},z)\big|^{2}\nu_1(dz)\\
			\le \!\!\!\!\!\!\!&&2\langle \bar{b}(x_1)-\mathbb{E} b(x_1,Y_{t}^{x_1,0}),x_1-x_2 \rangle+2\langle\mathbb{E} b(x_2,Y_{t}^{x_2,0})-\bar{b}(x_2),x_1-x_2\rangle\\
			&&+2\langle\mathbb{E} b(x_1,Y_{t}^{x_1,0})-\mathbb{E} b(x_2,Y_{t}^{x_1,0}),x_1-x_2\rangle+ \|\sigma (x_{1})-\sigma (x_{2})\|^{2}\\
			&&+\int_{\mathcal{Z}_1}\big|h_{1}(x_{1},z)-h_{1}(x_{2},z)\big|^{2}\nu_1(dz)+2\langle\mathbb{E} b(x_2,Y_{t}^{x_1,0})-\mathbb{E} b(x_2,Y_{t}^{x_2,0}),x_1-x_2\rangle\\
			\le \!\!\!\!\!\!\!&&Ce^{-\frac{\gamma t}{\ell}}\left|x_1-x_2\right|+C\left|x_1-x_2\right|^{2}+C\left|x_1-x_2\right|\mathbb{E}\big[(1+|Y_{t}^{x_1,0}|^{k}+|Y_{t}^{x_2,0}|^{k})|Y_{t}^{x_1,0}-Y_{t}^{x_2,0}|\big]\\
			\le \!\!\!\!\!\!\!&& Ce^{-\frac{\gamma t}{\ell}}|x_1\!-\!x_2|\!+\!C|x_1\!-\!x_2|^{2}\!+\!C|x_1\!-\!x_2|\big[\mathbb{E}(1+|Y_{t}^{x_1,0}|^{2k}+|Y_{t}^{x_2,0}|^{2k})\big]^{\frac{1}{2}}\big[\mathbb{E}|Y_{t}^{x_1,0}-Y_{t}^{x_2,0}|^{2}\big]^{\frac{1}{2}} \\
			\le \!\!\!\!\!\!\!&& Ce^{-\frac{\gamma t}{\ell}}\left|x_1-x_2\right|+C\left|x_1-x_2\right|^{2},
		\end{eqnarray*}
		Letting $t\rightarrow \infty$, we get that
		\begin{eqnarray}
			&&2\langle \bar{b}(x_{1})-\bar{b}(x_{2}),x_{1}-x_{2}\rangle+ \|\sigma (x_{1})-\sigma (x_{2})\|^{2}\nonumber\\
			&&\quad\quad\quad+\int_{\mathcal{Z}_1}\big|h_{1}(x_{1},z)-h_{1}(x_{2},z)\big|^{2}\nu_1(dz) \leq    C|x_{1}-x_{2}|^{2}.  \label{Avc1}
		\end{eqnarray}
		
		By \eref{ConA2},  \eref{ConA32}, \eref{FroY} and \eref{FR2}, we have
		\begin{eqnarray*}
			&&2\langle \bar{b}(x), x\rangle+ \|\sigma (x)\|^{2}+\int_{\mathcal{Z}_1}\big|h_{1}(x,z)\big|^{2}\nu_1(dz)\\
			\le \!\!\!\!\!\!\!&& 2\langle \bar{b}(x)-\mathbb{E} b(x,Y_{t}^{x,0}), x\rangle+ 2\langle \mathbb{E} b(x,Y_{t}^{x,0}), x\rangle+\|\sigma (x)\|^{2}+C(1+|x|^2)\\
			\le \!\!\!\!\!\!\!&&Ce^{-\frac{\gamma t}{\ell}}|x|+C(1+|x|^2)+C\EE|Y_{t}^{x,0}|^q\\
\le \!\!\!\!\!\!\!&&Ce^{-\frac{\gamma t}{\ell}}|x|+C(1+|x|^2).
		\end{eqnarray*}
		Letting $t\rightarrow \infty$, we get that
		\begin{eqnarray}
			2\langle \bar{b}(x), x\rangle+ \|\sigma (x)\|^{2}+\int_{\mathcal{Z}_1}\big|h_{1}(x,z)\big|^{2}\nu_1(dz) \leq    C(1+|x|^2). \label{Avc2}
		\end{eqnarray}
		
		Hence, \eref{Avc1} and \eref{Avc2} imply that equation (\ref{AR1}) has a unique solution $\bar{X}_{t}$ (see  \cite[Theorem 2.8]{XZ2019}).  Moreover, estimate (\ref{AR4}) can be easily obtained by a similar argument as in the proof of (\ref{X}). The proof is complete.
	\end{proof}

	\section{Poisson equation with polynomial growth coefficients}
	Consider the following Poisson equation:
	\begin{eqnarray}\label{PE}
		-\mathscr{L}_{2}(x)\Phi(x,\cdot)(y)= b(x,y)-\bar{b}(x),
	\end{eqnarray}
	which is equivalent to for any $k=1,\ldots,n$,
	\begin{eqnarray}\label{PEQ}
		-\mathscr{L}_{2}(x)\Phi_k(x,\cdot)(y)= b_k(x,y)-\bar{b}_k(x),
	\end{eqnarray}
	where $\Phi(x,y):=(\Phi_1(x,y),\ldots, \Phi_n(x,y))$ and $\mathscr{L}_{2}(x)$ is the infinitesimal generator of the frozen process $\{Y^{x,y}_{t}\}$, i.e., $\phi\in C^2(\RR^m)$,
	\begin{eqnarray*}
		\mathscr{L}_2(x)\phi(y):=\!\!\!\!\!\!\!&& \langle f(x,y), \partial_y  \phi(y)\rangle+\frac{1}{2}\text{Tr}[gg^{*}(x,y)\partial_y^{2}\phi(y)]\\
		&&+\int_{\mathcal{Z}_2}\left[\phi(y+h_{2}(x,y,z))-\phi(y)-\langle \partial_y  \phi(y),h_{2}(x,y,z)\rangle \right]\nu_2(dz).
	\end{eqnarray*}
Now, we can obtain the existence and regularity estimates of the solution of the Poisson equation \eref{PE}.

	\begin{proposition}\label{P4.1}
	Under the assumptions in Theorem \ref{main result 1}. Define
	\begin{eqnarray}
		\Phi(x,y):=\int^{\infty}_{0}\left[\EE b(x,Y^{x,y}_t)-\bar{b}(x)\right]dt. \label{SPE}
	\end{eqnarray}
Then $\Phi(x,y)$ is a solution of the Poisson equation (\ref{PE}).	Moreover, there exists $C>0$ such that for any $x\in\RR^n, y\in \RR^m$,
		\begin{eqnarray}
			\sup_{x\in\RR^{n}}|\Phi(x,y)|\leq C(1+|y|^{k+1   }),\label{E1}
		\end{eqnarray}
		\begin{eqnarray}
			\sup_{x\in\RR^{n}}\|\partial_y \Phi(x,y)\|\leq C(1+|x|^k+|y|^{k}),\label{E2}
		\end{eqnarray}
		\begin{eqnarray}
			\|\partial_x \Phi(x,y)\|\leq C(1+|x|^{2k+1}+|y|^{2k+1}),\label{E3}
		\end{eqnarray}
		\begin{eqnarray}
			\|\partial_{x}^2 \Phi(x,y)\|\leq C(1+|x|^{3k+1}+|y|^{3k+1}).\label{E4}
		\end{eqnarray}
	\end{proposition}
	\begin{proof}
		We will divide the proof into three steps.
		
		\textbf{Step 1:} In this step, we intend to prove (\ref{E1})-(\ref{E2}). By Proposition \ref{Ergodicity}, we have
		\begin{eqnarray*}
			\left| \Phi (x,y) \right|\le\!\!\!\!\!\!\!&& \int_{0}^{\infty}\left| \mathbb{E}b(x,Y_{t}^{x,y})-\bar{b}(x)\right |dt\\
			\le\!\!\!\!\!\!\!&&C(1+\left| y \right|^{k+1})\int_{0}^{\infty} e^{-\frac{\gamma t}{\ell}}dt\\
			\le\!\!\!\!\!\!\!&&C(1+\left | y \right |^{k+1}).
		\end{eqnarray*}
Thus the notation $\Phi$ is well-defined. Then we have for any $s>0$,
		\begin{eqnarray*}
			\frac{P^x_s\Phi (x,\cdot)(y)-\Phi (x,y)}{s}=\!\!\!\!\!\!\!&& \frac{\EE\Phi (x,Y^{x,y}_s)-\Phi (x,y)}{s}\\
=\!\!\!\!\!\!\!&&\frac{1}{s}\left\{\int_{0}^{\infty}\left[\mathbb{E}b(x,Y_{t+s}^{x,y})-\bar{b}(x)\right]dt-\Phi (x,y)\right\}\\
=\!\!\!\!\!\!\!&&\frac{1}{s}\left\{\int_{s}^{\infty}\left[\mathbb{E}b(x,Y_{t}^{x,y})-\bar{b}(x)\right]dt-\int_{0}^{\infty}\left[\mathbb{E}b(x,Y_{t}^{x,y})-\bar{b}(x)\right]dt\right\}\\
			=\!\!\!\!\!\!\!&&-\frac{1}{s}\int_{0}^{s}\left[\mathbb{E}b(x,Y_{t}^{x,y})-\bar{b}(x)\right]dt,
		\end{eqnarray*}
letting $s\rightarrow 0$, this implies $-\mathscr{L}_{2}(x)\Phi(x,\cdot)(y)= b(x,y)-\bar{b}(x)$, thus $\Phi(x,y)$ is a solution of Poisson equation (\ref{PE}).

		Note that the chain rule implies that for any unit $l\in \RR^m$,
		$$\partial_y \Phi(x,y)\cdot l=\int^{\infty}_0 \EE[\partial_y b(x,Y^{x,y}_t)\cdot (\partial_y Y^{x,y}_t\cdot l)]dt,$$
		Refer to the Proposition \ref{DFY} in the appendix, here $\partial_y Y^{x,y}_t\cdot l$ is the directional derivative of $Y^{x,y}_t$ with respect to $y$ in the direction $l$, which satisfies
		\begin{eqnarray}
			d[\partial _yY_{t}^{x,y}\cdot l]=\!\!\!\!\!\!\!\!\!&&\partial_yf(x,Y_{t}^{x,y})\cdot(\partial_yY_{t}^{x,y}\cdot l)dt+\partial_yg(x,Y_{t}^{x,y})\cdot (\partial_yY_{t}^{x,y}\cdot l)d\tilde{W}_{t}^{2} \nonumber\\
			&&+\int_{\mathcal{Z}_2}\partial_yh_2(x,Y_{t-}^{x,y},z) \cdot (\partial_yY_{t-}^{x,y}\cdot l)\tilde{N}^{2}(dz,dt).\label{partial y}
		\end{eqnarray}
		Moreover, the following estimate holds:
		\begin{eqnarray}
			\mathbb{E}|\partial_y Y^{x,y}_t\cdot l|^{\ell} \leq Ce^{-\gamma t}, \label{ParyY}
		\end{eqnarray}
where $C,\gamma>0$ and $\ell$ is the constant in assumption \ref{B1}.
		
Using \eref{ConA31}  and H\"older's inequality, we get
		\begin{eqnarray*}
			|\partial_y \Phi(x,y)\cdot l|\!\!\!\!\!\!\!&& \leq \int^{\infty}_{0}\EE \| \partial_y b(x,Y^{x,y}_t)\cdot (\partial_y Y^{x,y}_t\cdot l)\| dt \\
			&& \leq C\int^{\infty}_{0} \big[\EE (1+|x|^{2k}+|Y^{x,y}_t|^{2k}) \big]^{\frac{1}{2}}  \big[\EE |\partial_y Y^{x,y}_t\cdot l|^{2} \big]^{\frac{1}{2}} dt  \\
			&& \leq  C(1+|x|^k+|y|^{k})\int^{\infty}_{0} e^{-\gamma t}dt \\
			&& \leq C(1+|x|^k+|y|^{k}).
		\end{eqnarray*}
		
		Now, we define
		\begin{eqnarray*}
			\tilde b_{t_0}(x, y, t):=\hat b(x,y, t)-\hat b(x, y, t+t_0),
		\end{eqnarray*}
		where $\hat b(x, y, t):=\EE b(x, Y^{x, y}_t)$. Proposition \ref{Ergodicity} implies
		$$\lim_{t_0\rightarrow +\infty} \tilde b_{t_0}(x, y, t)=\EE b(x,Y^{x,y}_t)-\bar{b}(x).$$
		In order to prove \eref{E3} and \eref{E4}, it is sufficient to prove that for any unit vectors $l_1,l_2\in\RR^n$, $t_0>0$, $ t> 0$, $x\in\RR^{n}$ and $y\in\RR^{m}$, there exists $\gamma>0$ such that
		\begin{eqnarray}
			| \partial _x\tilde{b}_{t_0}(x,y,t)\cdot l_1|\le Ce^{-\gamma t}(1+ | x |^{2k+1}+|y|^{2k+1}), \label{E3.}
		\end{eqnarray}
		\begin{eqnarray}
			| \partial _{x}^2\tilde{b}_{t_0}(x,y,t)\cdot (l_1,l_2)|\le Ce^{-\gamma t}(1+|x|^{3k+1}+|y|^{3k+1}), \label{E4.}
		\end{eqnarray}
		which will be proved in Step 2 and Step 3, respectively.
		\vspace{0.3cm}
		
		\textbf{Step 2:} In this step, we indent to prove \eref{E3.}. By the Markov property,
		\begin{eqnarray*}
			\tilde b_{t_0}(x,y, t)=\!\!\!\!\!\!\!\!&& \hat b(x, y, t)-\EE b(x, Y^{x,y}_{t+t_0})\nonumber\\
			=\!\!\!\!\!\!\!\!&& \hat b(x,y, t)-\EE \big[\EE[b(x,Y^{x,y}_{t+t_0})|\mathscr{F}_{t_0}]\big]\nonumber\\
			=\!\!\!\!\!\!\!\!&& \hat b(x, y, t)-\EE \hat b(x, Y^{x,y}_{t_0},t),
		\end{eqnarray*}
		which implies
		\begin{eqnarray}
			\partial_{x}\tilde b_{t_0}(x,y, t)\cdot l_1=\!\!\!\!\!\!\!\!&&  [\partial_{x} \hat b(x, y, t)\cdot l_1-\EE \partial_{x}\hat {b}(x, Y^{x,y}_{t_0},t)\cdot l_1]\nonumber\\
			&&- \EE \left[\partial_y\hat b(x,Y^{x,y}_{t_0},t)\cdot (\partial_{x} Y^{x,y}_{t_0}\cdot l_1) \right],\nonumber\\
			=:\!\!\!\!\!\!\!\!\!&&I_1+I_2.\label{partial x}\label{F4.12}
		\end{eqnarray}
		Refer to the Proposition \ref{DFY} in the appendix, here $\partial_x Y^{x,y}_t\cdot l_1$ is the directional derivative of $Y^{x,y}_t$ with respect to $x$ in the direction $l_1$, which satisfies
		\begin{eqnarray*}
			d\left[\partial _xY_{t}^{x,y}\cdot l_1\right]=\!\!\!\!\!\!\!\!\!&&\left[\partial_x f(x,Y_{t}^{x,y})\cdot l_1+\partial_yf(x,Y_{t}^{x,y})\cdot(\partial_xY_{t}^{x,y}\cdot l_1)\right]dt\nonumber\\
			&&+ \left[\partial_x g(x,Y_{t}^{x,y})\cdot l_1+\partial_yg(x,Y_{t}^{x,y})\cdot( \partial_xY_{t}^{x,y}\cdot l_1)\right]d \tilde{W}_{t}^{2} \nonumber\\
			&&+\int_{\mathcal{Z}_2}\left[\partial_x h_2(x,Y_{t-}^{x,y},z)\cdot l_1+\partial_y h_2(x,Y_{t-}^{x,y},z) \cdot (\partial_x Y_{t-}^{x,y}\cdot l_1)\right]\tilde{N}^{2}(dt,dz).
		\end{eqnarray*}
		Meanwhile, the following estimate holds:
		\begin{eqnarray}
			\sup_{t\geq 0}\mathbb{E}|\partial_x Y^{x,y}_t\cdot l_1|^{\ell} \leq C, \label{ParxY}
		\end{eqnarray}
		where $\ell$ is the constant in assumption \ref{B1}.

		Note that
		\begin{eqnarray*}
			\partial_x \hat{b}(x,y,t)\cdot l_1=\EE\left[\partial _xb(x,Y_{t}^{x,y})\cdot l_1\right]+\EE[\partial _yb(x,Y_{t}^{x,y})\cdot (\partial_xY_{t}^{x,y}\cdot l_1)]  ,
		\end{eqnarray*}
		which implies that for any unit vectors $l_1\in\RR^n, l_2\in\RR^m$,
		\begin{eqnarray*}
			\partial_y\partial_x \hat{b}(x,y,t)\cdot (l_1,l_2)=\!\!\!\!\!\!\!\!&&\EE\left[\partial_y\partial _x b(x,Y_{t}^{x,y})\cdot(l_1, \partial_yY_{t}^{x,y}\cdot l_2)\right]\\
			&&+\EE\left[\partial ^2_yb(x,Y_{t}^{x,y})\cdot (\partial_xY_{t}^{x,y}\cdot l_1, \partial_y Y_t^{x,y}\cdot l_2)\right]  \\
			&&+\EE\left[\partial_yb(x,Y_{t}^{x,y})\cdot (\partial_y\partial_xY_{t}^{x,y}\cdot (l_1,  l_2))\right].
		\end{eqnarray*}
		 where $\partial_y\partial_x Y^{x,y}_t\cdot (l_1,l_2)$ is the directional derivative of $\partial_x Y^{x,y}_t\cdot l_1$ with respect to $y$ in the direction $l_2$.	Refer to Remark \ref{R6.2} in the appendix, we have
		\begin{eqnarray}
			\EE|\partial_y\partial_{x}Y^{x,y}_t\cdot (l_1,l_2)|^4\leq Ce^{-4\gamma t}(1+|y|^{4k}),\label{ParyxY}
		\end{eqnarray}
		which combines with \eref{ParyY} and \eref{ParxY}. It follows
		\begin{eqnarray}
			|\partial_y\partial_x \hat{b}(x,y,t)\cdot (l_1,l_2)|\leq C(1+|x|^{2k}+|y|^{2k})e^{-\gamma t}.\label{Paryxhatb}
		\end{eqnarray}
		Then we can obtain
		\begin{eqnarray}
			|I_1|\leq\!\!\!\!\!\!\!\!&& Ce^{-\gamma t}\EE\big[(1+|x|^{2k}+|y|^{2k}+|Y^{x,y}_{t_0}|^{2k}) | y-Y^{x,y}_{t_0}|\big] \nonumber \\
			\leq \!\!\!\!\!\!\!\!&&C e^{-\gamma t}(1+|y|^{2k+1}+|x|^{2k+1}). \label{R6}
		\end{eqnarray}
		
		Note that
		\begin{eqnarray}
			\partial_{y}\hat{b} (x, y, t)\cdot l_1=\EE\left[\partial_y b(x,Y^{x,y}_t) \cdot(\partial_y Y^{x,y}_t\cdot l_1)\right],
		\end{eqnarray}
		which together with \eref{ConA31} and \eref{ParyY}, we have
		\begin{eqnarray}
			|\partial_{y}\hat{b} (x, y, t)\cdot l_1|=\!\!\!\!\!\!\!\!&&|\EE[\partial_y b(x,Y^{x,y}_t) \cdot(\partial_y Y^{x,y}_t\cdot l_1) ]|\nonumber\\
			\leq\!\!\!\!\!\!\!\!&&C\left[\EE(1+|x|^{2k}+|Y^{x,y}_t|^{2k})\right]^{1/2}\left[\EE|\partial_y Y^{x,y}_t\cdot l_1 |^2\right]^{1/2}\nonumber\\
			\leq\!\!\!\!\!\!\!\!&&C e^{-\gamma t}(1+|x|^{k}+|y|^{k}). \label{R4}
		\end{eqnarray}
		Then by \eref{ParxY} and \eref{R4}, we get
		\begin{eqnarray}
			|I_2| \leq C e^{-\gamma t}(1+|x|^k+|y|^{k}). \label{R5}
		\end{eqnarray}
		
		Combining \eref{R6} and \eref{R5}, it follows
		\begin{eqnarray*}
			\left \| \partial _x\tilde{b}_{t_0}(x,y,t)\right \|\le Ce^{-\gamma t}(1+ | x |^{2k+1}+|y|^{2k+1}).
		\end{eqnarray*}

		\textbf{Step 3:} In this step, we indent to prove \eref{E4.}. Recall that \eref{F4.12}, then the chain rule yields
		\begin{eqnarray*}
			\partial_{x}^{2}\tilde{b}_{t_0}(x,y,t)\cdot(l_1,l_2)=\!\!\!\!\!\!\!&&[\partial_{x}^{2}\hat{b}(x,y,t)\cdot (l_1,l_2)-\EE\partial_{x}^{2}\hat{b}(x,Y_{t_0}^{x,y},t)\cdot(l_1,l_2)]\\
			&&-\EE[\partial_y\partial_{x}\hat{b}(x,Y_{t_0}^{x,y},t)\cdot (l_1, \partial_x Y_{t_0}^{x,y}\cdot l_2)]\\
			&&-\EE[\partial_x\partial_{y}\hat{b}(x,Y_{t_0}^{x,y},t)\cdot (\partial_x Y_{t_0}^{x,y}\cdot l_1, l_2)]\\
			&&-\EE[\partial_{y}^{2}\hat{b}(x,Y_{t_0}^{x,y},t)\cdot (\partial _xY_{t_0}^{x,y}\cdot l_1, \partial _xY_{t_0}^{x,y}\cdot l_2 )]\\
			&&-\EE[\partial _y\hat{b}(x,Y_{t_0}^{x,y},t)\cdot (\partial _{x}^{2}Y_{t_0}^{x,y}\cdot (l_1,l_2))]\\
			=:\!\!\!\!\!\!\!&&\sum_{i=1}^{5}J_i .
		\end{eqnarray*}
		
		For the term $J_1$. Note that
		\begin{eqnarray*}
			\partial_{x}^{2} \hat{b}(x,y,t)\cdot(l_1,l_2)=\!\!\!\!\!\!\!&&\EE\left[\partial_{x}^{2} b(x,Y_{t}^{x,y})\cdot (l_1,l_2)\right]+\EE\left[\partial_y\partial_{x}b(x,Y_{t}^{x,y})\cdot (l_1, \partial_x Y_{t}^{x,y}\cdot l_2)\right]\\
			&&+\EE\left[\partial_{x}\partial_y b(x,Y_{t}^{x,y})\cdot(\partial_x Y_{t}^{x,y}\cdot l_1, l_2)\right] \\
&&+\EE\left[\partial_{y}^{2}b(x,Y_{t}^{x,y})\cdot (\partial_xY_{t}^{x,y}\cdot l_1,\partial_xY_{t}^{x,y}\cdot l_2)\right]\\
			&&+\EE\left[\partial_yb(x,Y_{t}^{x,y})\cdot(\partial_{x}^{2}Y_{t}^{x,y}\cdot (l_1,l_2))\right],
		\end{eqnarray*}
		which implies
		\begin{eqnarray}
			\partial_y\partial_{x}^{2} \hat{b}(x,y,t)\cdot (l_1,l_2,l_3)=\!\!\!\!\!\!\!&&\EE\left[\partial_y\partial_{x}^{2} b(x,Y_{t}^{x,y})\cdot (l_1,l_2,\partial_y Y^{x,y}_t\cdot l_3)\right]\nonumber\\
			&&+\EE\left[\partial^2_y\partial_{x}b(x,Y_{t}^{x,y})\cdot (l_1, \partial_x Y_{t}^{x,y}\cdot l_2, \partial_y Y_{t}^{x,y}\cdot l_3)\right]\nonumber\\
			&&+\EE\left[\partial_y\partial_{x}b(x,Y_{t}^{x,y})\cdot (l_1, \partial_y\partial_x Y_{t}^{x,y}\cdot (l_2,l_3))\right]\nonumber\\
			&&+\EE\left[\partial_y\partial_{x}\partial_y b(x,Y_{t}^{x,y})\cdot(\partial_x Y_{t}^{x,y}\cdot l_1, l_2, \partial_y Y_{t}^{x,y}\cdot l_3)\right] \nonumber\\
			&&+\EE\left[\partial_{x}\partial_y b(x,Y_{t}^{x,y})\cdot(\partial_y\partial_x Y_{t}^{x,y}\cdot (l_1,l_3), l_2)\right] \nonumber\\
			&&+\EE\left[\partial_{y}^{3}b(x,Y_{t}^{x,y})\cdot (\partial_xY_{t}^{x,y}\cdot l_1,\partial_xY_{t}^{x,y}\cdot l_2, \partial_yY_{t}^{x,y}\cdot l_3) \right]\nonumber\\
			&&+\EE\left[\partial_{y}^{2}b(x,Y_{t}^{x,y})\cdot (\partial_y\partial_xY_{t}^{x,y}\cdot (l_1,l_3),\partial_xY_{t}^{x,y}\cdot l_2)\right]\nonumber\\
			&&+\EE\left[\partial_{y}^{2}b(x,Y_{t}^{x,y})\cdot (\partial_xY_{t}^{x,y}\cdot l_1,\partial_y\partial_xY_{t}^{x,y}\cdot (l_2,l_3))\right]\nonumber\\
			&&+\EE\left[\partial_{y}^{2}b(x,Y_{t}^{x,y})\cdot(\partial_{x}^{2}Y_{t}^{x,y}\cdot (l_1,l_2),\partial_yY_{t}^{x,y}\cdot l_3 )\right]\nonumber\\
			&&+\EE\left[\partial_yb(x,Y_{t}^{x,y})\cdot(\partial_y\partial_{x}^{2}Y_{t}^{x,y}\cdot (l_1,l_2,l_3))\right],   \label{Paryxx}
		\end{eqnarray}
		where $\partial^2_x Y^{x,y}_t\cdot (l_1,l_2)$ is the directional derivative of $\partial_x Y^{x,y}_t\cdot l_1$ with respect to $x$ in the direction $l_2$. $\partial_y\partial_{x}^{2}Y_{t}^{x,y}\cdot (l_1,l_2,l_3)$ is the directional derivative of $\partial^2_x Y^{x,y}_t\cdot (l_1,l_2)$ with respect to $y$ in the direction $l_3$. Refer to Remark \ref{R6.2} in the appendix, we have
\begin{eqnarray}
&&\sup_{t\geq 0}\EE|\partial^2_{x}Y^{x,y}_t\cdot (l_1,l_2)|^4\leq C(1+|y|^{4k}),\label{ParxxY}\\
&&\EE|\partial_y\partial^2_{x}Y^{x,y}_t\cdot (l_1,l_2,l_3)|^2\leq Ce^{-2\gamma t}(1+|y|^{4k}),\label{ParyxxY}
\end{eqnarray}
where $C,\gamma>0$. Then by \eref{ParxY}, \eref{ParyY}, \eref{ParyxY}, \eref{ParxxY} and \eref{ParyxxY}, we obtain
		\begin{eqnarray*}
			|\partial_y\partial_{x}^{2} \hat{b}(x,y,t)\cdot (l_1,l_2,l_3)|\leq C(1+|x|^{3k}+|y|^{3k})e^{-\gamma t},\label{Paryxxhatb}
		\end{eqnarray*}
		which implies
		\begin{eqnarray}
			J_{1}\le\!\!\!\!\!\!\!&& Ce^{-\gamma t} \EE\big[(1+|x|^{3k}+|y|^{3k}+|Y^{x,y}_{t_0}|^{3k})|y-Y^{x,y}_{t_0}|\big]\nonumber\\
			\le\!\!\!\!\!\!\!&&Ce^{-\gamma t}(1+|x|^{3k+1}+|y|^{3k+1}). \label{J_1}
		\end{eqnarray}
		
		For the terms $J_2$-$J_5$. By a similar argument as in the proof of \eref{Paryxxhatb}, it is easy to prove
		\begin{eqnarray*}
			&&|\partial_x\partial_{y}\hat b( x,y, t)\cdot(l_1,l_2)|\leq C e^{-\gamma t}(1+|x|^{2k}+|y|^{2k}),\\
			&&|\partial_{y}^{2}\hat{b}(x,y,t)\cdot (l_1,l_2)|\leq C e^{-\gamma t}(1+|x|^{2k}+|y|^{2k}),
		\end{eqnarray*}
		which together with \eref{ParxY}, \eref{R4}, \eref{Paryxhatb} and \eref{ParxxY}, we have
		\begin{eqnarray}\label{RQ3}
			\sum^5_{i=2}|J_i|\leq C e^{-\gamma t}(1+|x|^{3k}+|y|^{3k}).\label{J_2-J_5}
		\end{eqnarray}
		Finally, by \eref{J_1}  and \eref{J_2-J_5}, we get \eref{E4.}. The proof is complete.
	\end{proof}
	\begin{remark}
		The regularity estimate of the solution to the Poisson equation with respect to its parameters have been studied in many references, see e.g.  \cite{PV1,PV2,RX2021}, however the non-degenerate and bounded conditions for the singular coefficients are assumed in these mentioned references. Here we use the method by a straightforward computation, see e.g. \cite{RSX2021,SXX}, the coefficients $f$ and $g$ may have polynomial growth, which has its own interest. Meanwhile, the solution of Poisson equation \eref{PE} is not unique if without any other assumptions, however if the solution $\Phi(x,y)$ also satisfies the central condition, i.e., $\int_{\mathbb{R}^m}\Phi(x,y)\mu^x(dy)=0,\forall x\in\mathbb{R}^n$, then the solution is unique.	
	\end{remark}

	\section{Proofs of main results}
	
	In this section, we are going to give the detailed proofs of Theorems \ref{main result 1} and \ref{main result 2} in subsections 5.1 and 5.2, respectively.
	\subsection{Proof of Theorem 2.2}
	\begin{proof}
		Note that
		\begin{eqnarray*}
			X^\varepsilon_t-\bar{X}_t=\!\!\!\!\!\!&&\int_0^{t}\left[b(X_s^\varepsilon,Y_s^\varepsilon)-\bar{b}(\bar{X_s})\right]ds+\int_0^t\left[\sigma(X_s^\varepsilon)-\sigma(\bar{X_s})\right]dW_s^1\\
			&&+\int_0^t\int_{\mathcal{Z}_1}\left[h_1(X_{s-}^{\varepsilon},z)-h_1(\bar{X}_{s-},z)\right]\tilde{N}^1(ds,dz).
		\end{eqnarray*}
		By It\^{o}'s formula, we have for any $p\geq 4$,
		\begin{eqnarray*}
			|X_t^\varepsilon-\bar{X_t}|^{p}=\!\!\!\!\!\!&&p\int_0^t|X_s^\varepsilon-\bar{X}_s|^{p-2}\left\langle X_s^\varepsilon-\bar{X}_s,b(X_s^{\varepsilon},Y_s^{\varepsilon})-\bar{b}({X_s^{\varepsilon }})\right\rangle ds\\
			&&+p\int_0^t|X_s^\varepsilon-\bar{X}_s|^{p-2}\left\langle X_s^\varepsilon-\bar{X}_s,\bar{b}({X_s^{\varepsilon }})-\bar{b}(\bar{X}_s)\right\rangle ds\\
			&&+{\frac{p(p-2)}{2}}\int_0^t|X_s^\varepsilon-\bar{X}_s|^{p-4}\big|\!\left(\sigma(X_s^\varepsilon)-\sigma(\bar{X}_s)\right)^{*}\!\cdot\!\left(X_s^\varepsilon-\bar{X}_s \right) \!\big|^{2}ds\\
			&&+{\frac{p}{2}}\int_0^t|X_s^\varepsilon-\bar{X}_s|^{p-2} \|\sigma(X_s^\varepsilon)-\sigma(\bar{X}_s)\|^2ds\\
			&&+\int_0^t\int_{\mathcal{Z}_1}\big[|X_{s}^{\varepsilon}-\bar{X}_{s}+h_1(X_{s}^{\varepsilon},z)-h_1(\bar{X}_{s},z)|^p-|X_{s}^\varepsilon-\bar{X_{s}}|^{p} \\
			&&\quad\quad\quad-p|X_s^{\varepsilon}-\bar{X}_s|^{p-2}\left\langle X_s^\varepsilon-\bar{X}_s,h_1(X_{s}^\varepsilon,z)-h_1(\bar{X}_{s},z) \right\rangle \big]\nu_1(dz)ds \\
			&&+p\int_0^t|X_s^\varepsilon-\bar{X_s}|^{p-2}\left\langle X_s^\varepsilon-\bar{X_s},\left(\sigma(X_s^\varepsilon)-\sigma(\bar{X}_s)\right)dW_s^1\right\rangle\\
			&&+\int_0^t\!\int_{\mathcal{Z}_1}|X_{s-}^{\varepsilon}-\bar{X}_{s-}+h_1(X_{s-}^{\varepsilon},z)-h_1(\bar{X}_{s-},z)|^p-|X_{s-}^{\varepsilon}-\bar {X}_{s-}|^p\tilde{N}^1(ds,dz)\\
			=:\!\!\!\!\!\!&&\sum^{7}_{i=1}Q_i(t).
		\end{eqnarray*}
		
		By Young's inequality, we obtain that
		\begin{eqnarray}
			\mathbb{E}\left(\sup_{0\leq t\leq T}|Q_1(t)|\right) \le\!\!\!\!\!\!&& C_{p} \mathbb{E}\Bigg[\left(\sup_{0\leq s\leq T}|X_s^\varepsilon-\bar{X}_s|^{p-2}\right)\nonumber\\
			&&\quad\quad\quad \cdot\left(\sup_{0\leq t\leq T}\left|\int_0^t\left\langle X_s^\varepsilon-\bar{X}_s,b(X_s^{\varepsilon},Y_s^{\varepsilon})-\bar{b}({X_s^{\varepsilon }})\right\rangle ds\right|\right)\Bigg]\nonumber \\
			\le\!\!\!\!\!\!&& C_p\mathbb{E}\left(\sup_{0\leq t\leq T}\left|\int_0^t\left\langle X_s^\varepsilon-\bar{X}_s,b(X_s^{\varepsilon},Y_s^{\varepsilon})-\bar{b}({X_s^{\varepsilon }})\right\rangle ds\right|^{\frac{p}{2}}\right) \nonumber\\
			&&+ \frac{1}{4}\mathbb{E}\left(\sup_{0\leq t\leq T}|X_t^\varepsilon-\bar{X}_t|^{p}\right).\label{S5.1}
		\end{eqnarray}
		
		By \eref{ConA113} and \eref{Avc1}, it is easy to prove
		\begin{eqnarray}
			\sum^{4}_{i=2}\EE\left(\sup_{0\leq t\leq T}|Q_i(t)|\right)\le C_{p}\int_0^T\EE|X_s^\varepsilon-\bar{X}_s|^{p}ds.\label{S5.2}
		\end{eqnarray}
		
		By \eref{ConA12} and \eref{Taylor}, we have
		\begin{eqnarray}
			\EE\left(\sup_{0\leq t\leq T}|Q_5(t)|\right)\!\!\!\!\!\!\!\!\!&&\le\! C_{p}\EE\int_0^T\int_{\mathcal{Z}_1}|X_{s}^{\varepsilon}-\bar{X}_{s}|^{p-2}|h_1(X_{s}^{\varepsilon},z)-h_1(\bar{X}_{s},z)|^{2}\nonumber\\
			&&\quad\quad\quad+|h_1(X_{s}^{\varepsilon},z)-h_1(\bar{X}_{s},z)|^p\nu_1(dz) ds \nonumber\\
			&&\le C_{p}\int_0^T\EE|X_s^\varepsilon-\bar{X}_s|^{p}ds.\label{S5.3}
		\end{eqnarray}
		
		Using the Burkholder-Davis-Gundy's inequality (see \cite[Theorem 3.49]{PZ2007})  and Young's inequality, we get
		\begin{eqnarray}
			\mathbb{E}\left(\sup_{0\leq t\leq T}|Q_6(t)|\right)\!\!\!\!\!\!\!\!&&\le C_{p}\mathbb{E}\left[\int_{0}^{T}|X_s^\varepsilon-\bar{X}_s|^{2(p-1)} \|\sigma(X_s^\varepsilon)-\sigma(\bar{X}_s)\|^2ds \right]^{\frac{1}{2}}\nonumber\\
			\!\!\!\!&&\le \frac{1}{4}\mathbb{E}\left[\sup_{0\leq t\leq T}|X_t^\varepsilon-\bar{X}_t|^{p}\right]+\!  C_p\mathbb{E}\int_{0}^{T}|X_s^\varepsilon-\bar{X}_s|^{p}ds\label{S5.4}
		\end{eqnarray}
		and	using the Burkholder-Davis-Gundy's inequality  (see \cite[Theorem 3.50]{PZ2007}), \eref{Taylor2}  and Young's inequality, we obtain
		\begin{eqnarray}
			\mathbb{E}\left(\sup_{0\leq t\leq T}|Q_7(t)|\right)\le\!\!\!\!\!\!\!\!&& C_{p}\mathbb{E}\Big[\!\int_{0}^{T}\!\!\int_{\mathcal{Z}_1}\big(|X_{s-}^{\varepsilon}\!-\!\bar{X}_{s-}|^{2p-2}\cdot|h_1(X_{s-}^{\varepsilon},z)\!-\!h_1(\bar{X}_{s-},z)|^2\nonumber\\
			&&\quad\quad\quad\quad+|h_1(X_{s-}^{\varepsilon},z)\!-\!h_1(\bar{X}_{s-},z)|^{2p} \big){N}^1(dz,ds) \Big]^{\frac{1}{2}}\nonumber\\
			\le\!\!\!\!\!\!\!\!&&\frac{1}{4}\mathbb{E}\left(\sup_{0\leq t\leq T}|X_t^\varepsilon-\bar{X}_t|^{p}\right)+C_p\mathbb{E}\int_{0}^{T}|X_s^\varepsilon-\bar{X}_s|^{p}ds.\label{S5.5}
		\end{eqnarray}
		
		Combining  \eref{S5.1}-\eref{S5.5}, we get
		\begin{eqnarray*}
			\mathbb{E}\left(\sup_{0\leq t\leq T}|X^{\varepsilon}_t-\bar{X}_t|^p\right)\!\!\!\!\!\!&&
			\leq C_{p}\mathbb{E}\left[\sup_{0\leq t\leq T}\left|\int^t_0(X_{s}^{\varepsilon}-\bar{X}_{s})\cdot(b(X^{\varepsilon}_{s},Y^{\varepsilon}_{s})-\bar{b}(X^{\varepsilon}_{s}))ds\right|^{\frac{p}{2}}\right]\\
			\!\!\!\!\!\!&&+C_p\int_{0}^{T}\mathbb{E}|X_s^\varepsilon-\bar{X}_s|^{p}ds.
		\end{eqnarray*}
		
		Then by the Gronwall's inequality, it follows
		\begin{eqnarray}
			\mathbb{E}\left(\sup_{0\leq t\leq T}|X^{\varepsilon}_t-\bar{X}_t|^p\right)
			\leq\!\!C_{p,T}\mathbb{E}\left[\sup_{0\leq t\leq T}\big|\int^t_0(X_{s}^{\varepsilon}-\bar{X}_{s})\cdot(b(X^{\varepsilon}_{s},Y^{\varepsilon}_{s})-\bar{b}(X^{\varepsilon}_{s}))ds\big|^{p/2}\right]. \label{S5.6}
		\end{eqnarray}
		
		By the Proposition \ref{P4.1}, then following Poisson equation
		\begin{eqnarray}
			-\mathscr{L}_2(x)\Phi(x,\cdot)(y)=b(x,y)-\bar{b}(x)\label{S5.7}
		\end{eqnarray}
		admits a solution $\Phi(x,y)$ satisfying \eref{E1}-\eref{E4}.
		
		By It\^{o}'s formula, we obtain
		\begin{eqnarray*}
			&&\langle\Phi(X_t^{\varepsilon},Y_t^{\varepsilon}),X^{\varepsilon}_t-\bar{X}_t\rangle\\
			=\!\!\!\!\!\!&&\int_0^t\big\langle  X^{\varepsilon}_s-\bar{X}_s , \big\{\langle \partial_x\Phi(X_{s}^{\varepsilon},Y_s^{\varepsilon}),b(X_{s}^{\varepsilon},Y_s^{\varepsilon})\rangle+\frac{1}{2}\text{Tr}\big[ \sigma(X_{s}^{\varepsilon})\sigma^{*}(X_{s}^{\varepsilon})\partial_x^{2}\Phi(X_{s}^{\varepsilon},Y_s^{\varepsilon})\big]\\
			&&\quad+\int_{\mathcal{Z}_1}\left[\Phi(X_{s}^{\varepsilon}\!+\!h_1(X_{s}^{\varepsilon},z),Y_{s}^{\varepsilon}) \!-\!\Phi(X_{s}^{\varepsilon},Y_{s}^{\varepsilon}) -\partial_x\Phi(X_{s}^{\varepsilon},Y_s^{\varepsilon})\cdot h_1(X_{s}^{\varepsilon},z) \right] \nu_1(dz)   \big\}\big\rangle ds\\
			&&+\int_0^t \langle\Phi(X_s^{\varepsilon},Y_s^{\varepsilon}),b(X_s^{\varepsilon},Y_s^{\varepsilon})-\bar{b}(\bar{X_s})\rangle + \text{Tr}\left[\sigma(X_s^{\varepsilon})\big(\sigma^{\ast}(X_s^{\varepsilon})-\sigma^{\ast}(\bar {X}_s)\big)\cdot \partial_x\Phi(X_s^{\varepsilon},Y_s^{\varepsilon})\right] ds\\
			&&+\int_0^t\int_{\mathcal{Z}_1}\langle [\Phi(X_{s}^{\varepsilon}+h_1(X_{s}^{\varepsilon},z),Y_{s}^{\varepsilon})-\Phi(X_{s}^{\varepsilon},Y_{s}^{\varepsilon})]\cdot\big(h_1(X_{s}^{\varepsilon},z)-h_1(\bar {X}_{s},z)\big)\rangle \nu_1(dz)ds\\
			&&+\frac{1}{\varepsilon}\int_0^t \langle X^{\varepsilon}_s-\bar{X}_s , \mathscr{L}_{2}(X_{s}^{\varepsilon})\Phi(X^{\varepsilon}_{s},Y^{\varepsilon}_{s}) \rangle  ds\\
			&&+\int_0^t\langle\partial_x\Phi(X_s^{\varepsilon},Y_s^{\varepsilon})\cdot \sigma(X_s^{\varepsilon})dW_s^1, X^{\varepsilon}_{s} -\bar{X}_{s} \rangle+\int_0^t\langle\Phi(X_s^{\varepsilon},Y_s^{\varepsilon}),\big(\sigma(X_s^\varepsilon)-\sigma(\bar{X_s}) \big)dW_s^1 \rangle \\
			&&+\int_0^t\!\int_{\mathcal{Z}_1}\Big[\langle \Phi(X_{s-}^{\varepsilon}\!+\!h_1(X_{s-}^{\varepsilon},z),Y_{s-}^{\varepsilon})-\Phi(X_{s-}^{\varepsilon},Y_{s-}^{\varepsilon}), X^{\varepsilon}_{s-}-\bar{X}_{s-}\rangle\\
			&&\quad\quad\quad+\langle \Phi(X_{s-}^{\varepsilon}+h_1(X_{s-}^{\varepsilon},z),Y_{s-}^{\varepsilon}), h_1(X_{s-}^{\varepsilon},z)-h_1(\bar {X}_{s-},z)\rangle \Big]\tilde{N}^1(ds,dz)\\
			&&+\frac{1}{\sqrt{\varepsilon}}\int_0^t\langle\partial_y\Phi(X_s^{\varepsilon},Y_s^{\varepsilon})\cdot (X^{\varepsilon}_{s} -\bar{X}_{s} ),g(X_s^{\varepsilon},Y_s^{\varepsilon})dW_s^2\rangle\\
			&&+\int_0^t\int_{\mathcal{Z}_2}\langle\Phi(X_{s-}^{\varepsilon},Y_{s-}^{\varepsilon}+h_2(X_{s-}^{\varepsilon},Y_{s-}^{\varepsilon},z))- \Phi(X_{s-}^{\varepsilon},Y_{s-}^{\varepsilon}), X^{\varepsilon}_{s-} -\bar{X}_{s-} \rangle\tilde{N}^{2,\varepsilon}(ds,dz),
		\end{eqnarray*}
		which implies
		\begin{eqnarray}
			&&\int^t_0 \langle-\mathscr{L}_{2}(X_{s}^{\varepsilon})\Phi(X^{\varepsilon}_{s},Y^{\varepsilon}_{s}), X_{s}^{\varepsilon}-\bar{X}_{s}\rangle ds=\varepsilon\Big[-\langle \Phi(X_{t}^{\varepsilon},Y^{\varepsilon}_{t})
			, X_{t}^{\varepsilon}\!-\!\bar{X}_{t}\rangle\Big]\nonumber\\
			&&\quad+\varepsilon\Bigg\{\int_0^t\Big\langle  X^{\varepsilon}_s-\bar{X}_s , \Big\{\langle \partial_x\Phi(X_{s}^{\varepsilon},Y_s^{\varepsilon}),b(X_{s}^{\varepsilon},Y_s^{\varepsilon})\rangle+\frac{1}{2}\text{Tr}\big[ \sigma(X_{s}^{\varepsilon})\sigma^{*}(X_{s}^{\varepsilon})\partial_x^{2}\Phi(X_{s}^{\varepsilon},Y_s^{\varepsilon})\big]\nonumber\\
			&&\quad\quad\quad+\int_{\mathcal{Z}_1}\left[\Phi(X_{s}^{\varepsilon}\!+\!h_1(X_{s}^{\varepsilon},z),Y_{s}^{\varepsilon}) \!-\!\Phi(X_{s}^{\varepsilon},Y_{s}^{\varepsilon}) -\partial_x\Phi(X_{s}^{\varepsilon},Y_s^{\varepsilon})\cdot h_1(X_{s}^{\varepsilon},z) \right] \nu_1(dz)   \Big\}\Big\rangle ds\nonumber\\
			&&\quad+\int_0^t\left\langle\Phi(X_s^{\varepsilon},Y_s^{\varepsilon}),b(X_s^{\varepsilon},Y_s^{\varepsilon})-\bar{b}(\bar{X_s})\right\rangle + \text{Tr}\left[\sigma(X_s^{\varepsilon})\big(\sigma^{\ast}(X_s^{\varepsilon})-\sigma^{\ast}(\bar {X}_s)\big)\cdot \partial_x\Phi(X_s^{\varepsilon},Y_s^{\varepsilon})\right] ds\nonumber\\
			&&\quad+\int_0^t\int_{\mathcal{Z}_1}\left\langle [\Phi(X_{s}^{\varepsilon}+h_1(X_{s}^{\varepsilon},z),Y_{s}^{\varepsilon})-\Phi(X_{s}^{\varepsilon},Y_{s}^{\varepsilon})]\cdot\big(h_1(X_{s}^{\varepsilon},z)-h_1(\bar {X}_{s},z)\big)\right\rangle \nu_1(dz)ds\nonumber\\
			&&\quad+\int_0^t\left\langle\partial_x\Phi(X_s^{\varepsilon},Y_s^{\varepsilon})\cdot \sigma(X_s^{\varepsilon})dW_s^1, X^{\varepsilon}_{s} -\bar{X}_{s} \right\rangle+\int_0^t\left\langle\Phi(X_s^{\varepsilon},Y_s^{\varepsilon}),\big(\sigma(X_s^\varepsilon)-\sigma(\bar{X_s}) \big)dW_s^1 \right\rangle\nonumber\\
			&&\quad+\int_0^t\!\int_{\mathcal{Z}_1}\Big[\langle \Phi(X_{s-}^{\varepsilon}\!+\!h_1(X_{s-}^{\varepsilon},z),Y_{s-}^{\varepsilon})-\Phi(X_{s-}^{\varepsilon},Y_{s-}^{\varepsilon}), X^{\varepsilon}_{s-}-\bar{X}_{s-}\rangle\nonumber\\
			&&\quad\quad\quad+\langle \Phi(X_{s-}^{\varepsilon}+h_1(X_{s-}^{\varepsilon},z),Y_{s-}^{\varepsilon}), h_1(X_{s-}^{\varepsilon},z)-h_1(\bar {X}_{s-},z)\rangle \Big]\tilde{N}^1(ds,dz)\Bigg\}\nonumber\\
			&&\quad+\vare\int_0^t\int_{\mathcal{Z}_2}\langle\Phi(X_{s-}^{\varepsilon},Y_{s-}^{\varepsilon}+h_2(X_{s-}^{\varepsilon},Y_{s-}^{\varepsilon},z))- \Phi(X_{s-}^{\varepsilon},Y_{s-}^{\varepsilon}), X^{\varepsilon}_{s-} -\bar{X}_{s-} \rangle\tilde{N}^{2,\varepsilon}(ds,dz)\nonumber\\
			&&\quad+\sqrt{\vare}\int_0^t\left\langle\partial_y\Phi(X_s^{\varepsilon},Y_s^{\varepsilon})\cdot (X^{\varepsilon}_{s} -\bar{X}_{s} ),g(X_s^{\varepsilon},Y_s^{\varepsilon})dW_s^2\right\rangle\nonumber\\
			=:	\!\!\!\!\!\!\!\!&&\sum^4_{i=1}\tilde{Q}^{\vare}_i(t).\label{S5.8}
		\end{eqnarray}
		
		Combing \eref{S5.6}-\eref{S5.8}, it is obvious that
		\begin{eqnarray}
			\mathbb{E}\left(\sup_{0\leq t\leq T}|X^{\varepsilon}_t-\bar{X}_t|^p\right)\leq\!\!\!\!\!\!\!\!&& C_{p,T}\mathbb{E}\left[\sup_{0\leq t\leq T}\left|\int^t_0\langle\mathscr{L}_{2}(X_{s}^{\varepsilon})\Phi(X^{\varepsilon}_{s},Y^{\varepsilon}_{s}), X_{s}^{\varepsilon}-\bar{X}_{s}\rangle ds\right|^{p/2}\right]\nonumber\\
			\leq \!\!\!\!\!\!\!\!&& C_{p,T}\sum^4_{i=1}\mathbb{E}\left(\sup_{0\leq t\leq T}|\tilde{Q}^{\vare}_i(t)|^{p/2}\right).\label{S5.9}		
		\end{eqnarray}
		
		For the term $\tilde{Q}^{\vare}_1(t)$. By Young's inequality,  (\ref{E1}) and \eref{Y.1}, we get for any $p\geq 2$ and small enough $\varepsilon>0$,
		\begin{eqnarray}
			\mathbb{E}\left(\sup_{0\leq t\leq T}|\tilde{Q}^{\vare}_1(t)|^{\frac{p}{2}}\right)\!\!\!\!\!\!\!\!&&\leq \varepsilon^{\frac{p}{2}}\mathbb{E}\left(\sup_{0\leq t\leq T}|\Phi(X_{t}^{\varepsilon},Y^{\varepsilon}_{t})\cdot(X_{t}^{\varepsilon}-\bar{X}_{t})|^{p/2}\right)\nonumber\\
			\leq\!\!\!\!\!\!\!\!&&\frac{1}{4}\mathbb{E}\left(\sup_{0\leq t\leq T}|X^{\varepsilon}_t-\bar{X}_t|^{p}\right)+C_{p}\varepsilon^{p}\Big\{1+\mathbb{E}\left(\sup_{0\leq t\leq T}|Y^{\varepsilon}_{t}|^{p(k+1)}\right) \Big\}\nonumber\\
			\leq\!\!\!\!\!\!\!\!&&\frac{1}{4}\mathbb{E}\left(\sup_{0\leq t\leq T}|X^{\varepsilon}_t-\bar{X}_t|^p\right)+C_{p,T}\varepsilon^{p/2}\big(1+|y|^{p(k+1)}\big).\label{S5.10}	
		\end{eqnarray}
		
		For the term $\tilde{Q}^{\vare}_2(t)$. Using Burkholder-Davis-Gundy's inequality (see \cite[Theorem 3.50]{PZ2007}),  assumptions \ref{A1}-\ref{A3}, \eref{E1}-\eref{E4}, \eref{X}, \eref{Y}, (\ref{AR4}) and by a straightforward computation,  it is easy to prove
		\begin{eqnarray}
			\mathbb{E}\left(\sup_{0\leq t\leq T}|\tilde{Q}^{\vare}_2(t)|^{p/2}\right)\leq C_{p,T}\varepsilon^{p/2}\big(1+|x|^{3(k+1)p}+|y|^{3q(k+1)p/2}\big).\label{S5.10}	
		\end{eqnarray}

     For the term $\tilde{Q}^{\vare}_3(t)$. Using Kunita's first inequality (see \cite[Theorem 4.4.23]{A2009}), for any $p\geq  4$
		\begin{eqnarray}
			&&\mathbb{E}\left(\sup_{0\leq t\leq T}|\tilde{Q}^{\vare}_3(t)|^{p/2}\right)\nonumber\\
			\leq\!\!\!\!\!\!\!\!&&\varepsilon^{p/4}C_{p}\mathbb{E}\left[\int^T_0\int_{\mathcal{Z}_2}|\Phi(X^{\varepsilon}_{s},Y^{\varepsilon}_{s}+h_2(X_{s}^{\varepsilon},Y_{s}^{\varepsilon},z))
			-\Phi(X^{\varepsilon}_{s},Y^{\varepsilon}_{s})|^2 |X_{s}^{\varepsilon}-\bar{X}_{s}|^{2}\nu_2(dz)ds\right]^{p/4}\nonumber\\	 \!\!\!\!\!\!\!\!&&+\varepsilon^{p/2-1}C_{p}\mathbb{E}\int^T_0\int_{\mathcal{Z}_2}|\Phi(X^{\varepsilon}_{s},Y^{\varepsilon}_{s}+h_2(X_{s}^{\varepsilon},Y_{s}^{\varepsilon},z))
			-\Phi(X^{\varepsilon}_{s},Y^{\varepsilon}_{s})|^{p/2}|X_{s}^{\varepsilon}-\bar{X}_{s}|^{p/2}\nu_2(dz)ds\nonumber\\
			\leq\!\!\!\!\!\!\!\!&&\frac{1}{4} \mathbb{E}\left(\sup_{0\leq t\leq T}|X^{\varepsilon}_t-\bar{X}_t|^p\right)  \nonumber \\
			&&+\varepsilon^{p/2}C_{p}\mathbb{E}\left[\int^{T}_0\!\!\!\int_{\mathcal{Z}_2}(1+|Y^{\varepsilon}_{s}|^{2k}+|h_2(X_{s}^{\varepsilon},Y_{s}^{\varepsilon},z)|^{2k})
			|h_2(X_{s}^{\varepsilon},Y_{s}^{\varepsilon},z)|^{2}\nu_2(dz)ds\right]^{p/2}\nonumber\\
			&&+\varepsilon^{p-2}C_{p}\mathbb{E}\left[\int^{T}_0\!\!\!\int_{\mathcal{Z}_2}(1+|Y^{\varepsilon}_{s}|^{kp/2}+|h_2(X_{s}^{\varepsilon},Y_{s}^{\varepsilon},z)|^{kp/2})
			|h_2(X_{s}^{\varepsilon},Y_{s}^{\varepsilon},z)|^{p/2}\nu_2(dz)ds\right]^{2}\nonumber\\	
			\leq \!\!\!\!\!\!\!\!&&\frac{1}{4}\mathbb{E}\left(\sup_{0\leq t\leq T}|X^{\varepsilon}_t-\bar{X}_t|^p\right)+\varepsilon^{p/2}C_{p,T}\big(1+|y|^{(k+1)p}\big).\label{S5.16}
		\end{eqnarray}
		
		For the term $\tilde{Q}^{\vare}_4(t)$. Using Burkholder-Davis-Gundy's inequality (see \cite[Theorem 3.49]{PZ2007}), Young's inequality and H\"{o}lder's inequality, we obtain
		\begin{eqnarray}
			\mathbb{E}\left(\sup_{0\leq t\leq T}|\tilde{Q}^{\vare}_4(t)|^{p/2}\right)
			\leq\!\!\!\!\!\!\!\!&&\vare^{p/4}C_{p}\EE\left[\int_0^T\|\partial_y\Phi(X_s^{\varepsilon},Y_s^{\varepsilon})\|^2 |X^{\varepsilon}_{s} -\bar{X}_{s} |^2\|g(X_s^{\varepsilon},Y_s^{\varepsilon})\|^2ds\right]^{p/4}\nonumber\\
			\leq\!\!\!\!\!\!\!\!&& \frac{1}{4}\mathbb{E}\left(\sup_{0\leq t\leq T}|X^{\varepsilon}_t-\bar{X}_t|^p\right)+\vare^{p/2}C_{p,T}\EE\int_0^T\|\partial_y\Phi(X_s^{\varepsilon},Y_s^{\varepsilon})\|^p\|g(X_s^{\varepsilon},Y_s^{\varepsilon})\|^pds\nonumber \\
			\leq\!\!\!\!\!\!\!\!&&\frac{1}{4}\mathbb{E}\left(\sup_{0\leq t\leq T}|X^{\varepsilon}_t-\bar{X}_t|^p\right)+\vare^{p/2}C_{p,T}\big(1+|y|^{(k+1)p}\big).\label{S5.17}
		\end{eqnarray}
		
		Hence, by \eref{S5.9}-\eref{S5.17}, we final obtain
		\begin{eqnarray*}
			\mathbb{E}\left(\sup_{0\leq t\leq T}|X^{\varepsilon}_t-\bar{X}_t|^p\right)\leq \varepsilon^{p/2}C_{p,T}\left(1+|x|^{3(k+1)p}+|y|^{3q(k+1)p/2}\right).
		\end{eqnarray*}
		The proof is complete.
	\end{proof}

	\subsection{Proof of Theorem \ref{main result 2}}
	Recall the gerenal averaged equation:
	\begin{equation}\label{AR2}
		d\bar{X_{t} }=\bar{b}(\bar{X}_{t})dt+\bar \sigma (\bar{X}_{t})d W_{t}+\int_{\mathcal{Z}_1 }h_{1}(\bar{X }_{t-},z)\tilde{N}^{1}(dz,dt) \\
		\displaystyle ,\bar{X}_0=x,
	\end{equation}
	where $\bar{b}(x)=\int_{\RR^{m}}b(x,y)\mu^{x}(dy)$, $\bar \sigma (x)=\left[\int_{\RR^m}\sigma(x,y)\sigma^{\ast}(x,y)\mu^x(dy)\right]^{1/2}$ and $W$ is a $n$-dimensional standard Wiener process.

	Under the assumption \ref{A4}, using a similar argument as in the proof of Proposition \ref{DFY} in the appendix, we can  prove the differentiability of $Y^{x,y}_t$ with respect to $x$ up to fourth derivative. By \eref{ConA422}, \eref{ConA423} and \eref{ConA43}, it is easy to prove that there exists $C>0$ such that for any $x\in\RR^n, y\in \RR^m$ and unit vectors $l_i\in\RR^n$, $i=1,2,3,4,$
	\begin{eqnarray}
		&&\sup_{t\geq 0}\EE|\partial_{x}Y^{x,y}_t\cdot l_1|^{\ell}\leq C, \label{Parx^1Y}\\
		&&\sup_{t\geq 0}\EE|\partial^{2}_{x}Y^{x,y}_t\cdot (l_1,l_2) |^{8}\leq C(1+|y|^{8k}), \label{Parx^2Y}\\
		&&\sup_{t\geq 0}\EE|\partial^{3}_{x}Y^{x,y}_t\cdot (l_1,l_2,l_3)|^{4}\leq C(1+|y|^{8k}),\label{Parx^3Y}\\
		&&\sup_{t\geq 0}\EE|\partial^{4}_{x}Y^{x,y}_t\cdot (l_1,l_2,l_3,l_4)|^{2}\leq C(1+|y|^{6k}),\label{Parx^4Y}
	\end{eqnarray}
where $\ell>16$ is the constant in assumption \ref{A4}. Note that
	$$\bar{b}(x)=\int_{\RR^m}b(x,y)\mu^{x}(dy)=\lim_{t\rightarrow \infty}\EE b(x, Y^{x,0}_t).$$
	Using $b\in C^{4,4}_p(\RR^n\times\RR^m,\RR^n)$, \eref{FroY}, \eref{Parx^1Y}-\eref{Parx^4Y} and a straightforward computation, we can obtain that $\bar{b}\in C^{4}_{p}(\RR^n,\RR^n)$.
	
	Similarly, for any $1\leq i,j\leq n$,
	$$(\overline{\sigma\sigma^{\ast}})_{ij}(x)
	=\int_{\RR^m}\sum^{d_1}_{l=1}\sigma_{il}(x,y)\sigma_{jl}(x,y)\mu^{x}(dy)=\lim_{t\rightarrow \infty}\sum^{d_1}_{l=1}\EE\left[ \sigma_{il}(x, Y^{x,0}_t)\sigma_{jl}(x,Y^{x,0}_t)\right].$$
	Using  \eref{ConA412}, \eref{Parx^1Y}-\eref{Parx^4Y} and a straightforward computation, it is easy to prove
	$\overline{\sigma\sigma^{\ast}}
	\in C^{4}_{b}(\RR^n,\RR^{n}\times\RR^n)$.  Note that $\bar{\sigma}(x)=
	(\overline{\sigma\sigma^{\ast}})^{1/2}(x)$ and \eref{ConA42} holds, then  by a similar argument as used in \cite[Lemma A.7]{CLX}, we can get $\bar{\sigma}\in C^4_b(\RR^n,\RR^n\times \RR^n)$.
	
	Thus, by the same argument as in the proof of Lemma \ref{PMA}, it is easy to check that equation \eref{AR2} admits a unique solution $\{\bar{X}^x_t\}_{t\geq 0}$. Moreover, for any $T>0$ and $p>0$, there exists $C_{p,T}>0$ such that
	\begin{eqnarray}
		\mathbb{E}\left(\sup_{0\leq t\leq T}|\bar{X}^x_{t}|^{p}\right)\leq C_{p,T}\left(1+|x|^{p}\right).\label{AR3}
	\end{eqnarray}
	
	\vspace{0.2cm}
	Now, we consider the following Kolmogorov equation:
	\begin{equation}\left\{\begin{array}{l}\label{KE}
			\displaystyle
			\partial_t u(t,x)=\bar{\mathscr{L}}_1 u(t,x),\quad t\in[0, T], \\
			u(0, x)=\phi(x),
		\end{array}\right.
	\end{equation}
	where $\phi\in C^{4}_p(\RR^{n})$ and $\bar{\mathscr{L}}_1$ is the infinitesimal generator of the transition semigroup of the averaged equation \eref{AR2}, which is given by
	\begin{eqnarray*}
		&&\bar{\mathscr{L}}_1\phi(x):=\langle \bar{b}(x), D  \phi(x)\rangle+\frac{1}{2}\text{Tr}[\bar{\sigma}\bar{\sigma}(x)D^{2}\phi(x)]\\
		&&\quad\quad\quad\quad\quad\quad+\int_{\mathcal{Z}_1}\left[\phi(x+h_{1}(x,z))\!-\!\phi(x)\!-\!\langle D  \phi(x),h_{1}(x,z)\rangle \right]\nu_1(dz).
	\end{eqnarray*}
	Refer to Proposition \ref{EUE} in the appendix, \eref{KE} admits a unique solution $u$ which is given by
	$$
	u(t,x)=\EE\phi(\bar{X}^x_t),\quad t\geq 0.
	$$
	
	Next, we present the regularity estimates of the solution $u$ of equation \eref{KE}.
	\begin{lemma} \label{Lemma 5.1}
		For any unit vectors $l_1,l_2,l_3,l_4\in \RR^n$ and $T>0$, there exist $C_T>0$ and $k_1>0$ such that for any $x\in\RR^n$,
		\begin{eqnarray}
			&&\sup_{t\in[0,T]}|\partial_{x} u(t,x)\cdot l_1|\leq C_{T}(1+|x|^{k_1}),\label{UE3}\\
			&&\sup_{t\in[0,T]}|\partial^2_{x} u(t,x)\cdot (l_1,l_2)|\leq C_T (1+|x|^{k_1}),\label{UE4}\\
			&&\sup_{t\in[0,T]}|\partial^3_{x} u(t,x)\cdot (l_1,l_2,l_3)|\leq C_{T}(1+|x|^{k_1}), \label{UE5}\\
			&&\sup_{t\in[0,T]}|\partial^4_{x} u(t,x)\cdot (l_1,l_2,l_3,l_4)|\leq C_{T}(1+|x|^{k_1}), \label{UE6}\\
			&&\sup_{t\in[0,T]}|\partial_t(\partial_x u(t,x))\cdot l_1|\leq C_T (1+|x|^{k_1}),\label{UE1}\\
			&&\sup_{t\in[0,T]}|\partial_t(\partial^2_x u(t,x))\cdot (l_1,l_2)|\leq C_T (1+|x|^{k_1}).\label{UE2}
		\end{eqnarray}
	\end{lemma}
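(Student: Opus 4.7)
The plan is to exploit the probabilistic representation $u(t,x)=\EE\phi(\bar{X}^x_t)$ and reduce everything to moment estimates on the derivatives of the averaged flow $x\mapsto\bar{X}^x_t$. Since the excerpt has already established that $\bar{b}\in C^4_p(\RR^n,\RR^n)$, $\bar{\sigma}\in C^4_b(\RR^n,\RR^n\times\RR^n)$, and $h_1(\cdot,z)$ has derivatives in $L^p(\nu_1)$ up to order four via \eref{ConA413}, we are in a standard setting where the variational equations for $\partial^i_x \bar{X}^x_t$ ($i=1,2,3,4$) are well-posed and can be estimated by It\^o's formula plus Gronwall.

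First I would write out the linearized SDE for $\partial_x\bar{X}^x_t\cdot l_1$, which is driven by $D\bar{b}(\bar{X}^x_t)$, $D\bar{\sigma}(\bar{X}^x_t)$, and $D_x h_1(\bar{X}^x_t,z)$ acting on $\partial_x\bar{X}^x_t\cdot l_1$. Using the boundedness of $D\bar{\sigma}$ and the polynomial growth of $D\bar{b}$ together with \eref{AR3}, a direct It\^o argument coupled with Gronwall's inequality yields $\sup_{t\le T}\EE|\partial_x\bar{X}^x_t\cdot l_1|^{p}\le C_{p,T}(1+|x|^{k_1(p)})$. The higher-order processes $\partial^2_x\bar{X}^x_t$, $\partial^3_x\bar{X}^x_t$, $\partial^4_x\bar{X}^x_t$ satisfy affine SDEs whose homogeneous part has the same $D\bar{b}$, $D\bar{\sigma}$, $D_x h_1$ coefficients, while the inhomogeneous part involves products of lower-order derivative processes weighted by $D^{i}\bar{b}$, $D^{i}\bar{\sigma}$, $D^{i}_x h_1$ for $i\leq 4$. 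Inductively, H\"older's inequality combined with the previous step yields polynomial-in-$|x|$ bounds of the form $\sup_{t\le T}\EE|\partial^{i}_x\bar{X}^x_t|^{p}\le C_{p,T}(1+|x|^{k_1(p,i)})$ for $i=1,\ldots,4$.

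Second, since $\phi\in C^{4}_p(\RR^n)$, differentiating $u(t,x)=\EE\phi(\bar{X}^x_t)$ under the expectation yields Fa\`a di Bruno-type expressions: for example
\begin{eqnarray*}
\partial^{2}_x u(t,x)\cdot(l_1,l_2)=\EE\big[D^2\phi(\bar{X}^x_t)\cdot(\partial_x\bar{X}^x_t\cdot l_1,\partial_x\bar{X}^x_t\cdot l_2)\big]+\EE\big[D\phi(\bar{X}^x_t)\cdot(\partial^2_x\bar{X}^x_t\cdot(l_1,l_2))\big],
\end{eqnarray*}
and analogous formulas for orders three and four. Applying the polynomial bound on $D^j\phi$ together with H\"older's inequality and the moment estimates from the previous paragraph gives \eref{UE3}--\eref{UE6}.

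For the mixed time-space derivatives \eref{UE1}--\eref{UE2}, I would use the PDE $\partial_t u=\bar{\mathscr{L}}_1 u$ directly: commuting $\partial_t$ with $\partial_x$ or $\partial^2_x$ reduces the problem to bounding $\partial_x(\bar{\mathscr{L}}_1 u)$ and $\partial^2_x(\bar{\mathscr{L}}_1 u)$, which by Leibniz's rule splits into sums of terms involving $D^i\bar{b}$, $D^i\bar{\sigma}\bar{\sigma}^{\ast}$, $D^i_x h_1$ (for $i\le 2$ and $i\le 3$ respectively) multiplied by spatial derivatives of $u$ of order up to three and four. The already-proved bounds \eref{UE3}--\eref{UE6} together with the regularity of $\bar{b}$, $\bar{\sigma}$, and \eref{ConA413} close the estimate, taking $k_1$ to be a sufficiently large common exponent. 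The main technical obstacle I expect is the bookkeeping in the higher-order variational equations — keeping track of which polynomial exponent in $|x|$ arises at each stage and ensuring the powers remain finite — and verifying that the jump contribution in $\bar{\mathscr{L}}_1 u$ can be differentiated inside the integral against $\nu_1$ (Taylor-expanding the finite difference and using \eref{ConA413} to dominate by an $\nu_1$-integrable envelope).
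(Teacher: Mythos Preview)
Your overall strategy matches the paper's: differentiate the probabilistic representation $u(t,x)=\EE\phi(\bar X^x_t)$, estimate moments of $\partial^i_x\bar X^x_t$ via the variational SDEs, and combine with $\phi\in C^4_p$. There is, however, one genuine gap in the first step. You justify the moment bound on $\partial_x\bar X^x_t\cdot l_1$ by ``the polynomial growth of $D\bar b$ together with \eref{AR3}, a direct It\^o argument coupled with Gronwall's inequality''. This does not close as stated: applying It\^o's formula to $|\partial_x\bar X^x_t\cdot l|^p$ produces the drift term
\[
p\,|\partial_x\bar X^x_t\cdot l|^{p-2}\,\langle D\bar b(\bar X^x_t)\cdot(\partial_x\bar X^x_t\cdot l),\,\partial_x\bar X^x_t\cdot l\rangle,
\]
and since $D\bar b$ is only polynomially bounded you cannot dominate this by $C|\partial_x\bar X^x_t\cdot l|^p$ and run Gronwall. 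Separating the $D\bar b(\bar X^x_t)$ factor via H\"older and \eref{AR3} forces a strictly higher moment of the flow derivative on the right-hand side, so the inequality does not close on itself. The missing ingredient is the one-sided Lipschitz bound inherited from the monotonicity condition \eref{Avc1}, which yields $\langle D\bar b(x)\cdot v,\,v\rangle\le C|v|^2$ uniformly in $x$; with this the drift term is bounded by $C|\partial_x\bar X^x_t\cdot l|^p$ and Gronwall applies. The paper invokes \eref{Avc1} explicitly and in fact obtains the $x$-independent bound $\sup_{t\le T}\EE|\partial_x\bar X^x_t\cdot l|^{16}\le C_T$. The same one-sided bound is what controls the homogeneous part of the higher-order variational SDEs; the polynomial growth of $D^i\bar b$ for $i\ge 2$ only enters the inhomogeneous terms, where your H\"older-plus-\eref{AR3} argument does work.

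For \eref{UE1}--\eref{UE2} your PDE-based route --- differentiating $\partial_t u=\bar{\mathscr L}_1 u$ in $x$ via Leibniz --- is a legitimate alternative to the paper's approach, which instead applies It\^o's formula to $D\phi(\bar X^x_t)\cdot(\partial_x\bar X^x_t\cdot l)$ to obtain an integral-in-time representation and then reads off $\partial_t$. Both routes lead to the same bound once \eref{UE3}--\eref{UE6} are in hand; your version is slightly more direct but requires differentiating the nonlocal jump term of $\bar{\mathscr L}_1 u$ under the $\nu_1$-integral, which (as you note) is handled by Taylor expansion and \eref{ConA413}.
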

	\begin{proof}
		By the chain rule and $\phi\in C^{4}_p(\RR^{n})$, we have for any unit vectors $l_1,l_2,l_3,l_4\in \RR^n$,
		\begin{eqnarray*}
			&&\partial_x u(t,x)\cdot l_1=\EE[D\phi(\bar{X}^x_t)\cdot (\partial_x \bar{X}^x_t\cdot l_1)];\nonumber\\
			&&\partial^2_{x} u(t,x)\cdot (l_1,l_2)=\EE\left[D^2\phi(\bar{X}^x_t)\cdot (\partial_x \bar{X}^x_t\cdot l_1, \partial_x \bar{X}^x_t\cdot l_2)\right]\\
			&&\quad\quad\quad\quad\quad\quad\quad\quad\quad+\EE\left[D\phi(\bar{X}^x_t)\cdot  (\partial^2_{x} \bar{X}^x_t\cdot (l_1,l_2))\right];\\
			&&\partial^3_{x} u(t,x)\cdot (l_1,l_2,l_3)=\EE\left[D^3\phi(\bar{X}^x_t)\cdot  (\partial_x \bar{X}^x_t\cdot l_1, \partial_x \bar{X}^x_t\cdot l_2, \partial_x \bar{X}^x_t\cdot l_3)\right]\\
			&&\quad\quad\quad\quad\quad\quad\quad\quad\quad\quad+\EE\left[D^2\phi(\bar{X}^x_t)\cdot (\partial^2_x \bar{X}^x_t\cdot ( l_1, l_3), \partial_x \bar{X}^x_t\cdot l_2)\right]\\
			&&\quad\quad\quad\quad\quad\quad\quad\quad\quad\quad+\EE\left[D^2\phi(\bar{X}^x_t)\cdot (\partial_x \bar{X}^x_t\cdot l_1, \partial^2_x \bar{X}^x_t\cdot ( l_2,l_3))\right]\\
			&&\quad\quad\quad\quad\quad\quad\quad\quad\quad\quad+\EE\left[D^2\phi(\bar{X}^x_t)\cdot  (\partial^2_{x} \bar{X}^x_t\cdot (l_1,l_2), \partial_x \bar{X}^x_t\cdot l_3)\right]\\
			&&\quad\quad\quad\quad\quad\quad\quad\quad\quad\quad+\EE\left[D\phi(\bar{X}^x_t)\cdot (\partial^3_x \bar{X}^x_t\cdot (l_1,l_2,l_3))\right];\\
			&&\partial^4_{x} u(t,x)\cdot (l_1,l_2,l_3,l_4)=\EE\left[D^4\phi(\bar{X}^x_t)\cdot  (\partial_x \bar{X}^x_t\cdot l_1, \partial_x \bar{X}^x_t\cdot l_2, \partial_x \bar{X}^x_t\cdot l_3,\partial_x \bar{X}^x_t\cdot l_4)\right]\\
			&&\quad\quad\quad\quad\quad\quad\quad\quad\quad\quad+\EE\left[D^3\phi(\bar{X}^x_t)\cdot  (\partial^2_x \bar{X}^x_t\cdot (l_1,l_4), \partial_x \bar{X}^x_t\cdot l_2, \partial_x \bar{X}^x_t\cdot l_3)\right]\\
			&&\quad\quad\quad\quad\quad\quad\quad\quad\quad\quad+\EE\left[D^3\phi(\bar{X}^x_t)\cdot  (\partial_x \bar{X}^x_t\cdot l_1, \partial^2_x \bar{X}^x_t\cdot (l_2,l_4), \partial_x \bar{X}^x_t\cdot l_3)\right]\\
			&&\quad\quad\quad\quad\quad\quad\quad\quad\quad\quad+\EE\left[D^3\phi(\bar{X}^x_t)\cdot  (\partial_x \bar{X}^x_t\cdot l_1, \partial_x \bar{X}^x_t\cdot l_2, \partial^2_x \bar{X}^x_t\cdot (l_3,l_4))\right]\\
			&&\quad\quad\quad\quad\quad\quad\quad\quad\quad\quad+\EE\left[D^3\phi(\bar{X}^x_t)\cdot (\partial^2_x \bar{X}^x_t\cdot ( l_1, l_3), \partial_x \bar{X}^x_t\cdot l_2, \partial_x \bar{X}^x_t\cdot l_4)\right]\\
			&&\quad\quad\quad\quad\quad\quad\quad\quad\quad\quad+\EE\left[D^2\phi(\bar{X}^x_t)\cdot (\partial^3_x \bar{X}^x_t\cdot ( l_1, l_3,l_4), \partial_x \bar{X}^x_t\cdot l_2)\right]\\
			&&\quad\quad\quad\quad\quad\quad\quad\quad\quad\quad+\EE\left[D^2\phi(\bar{X}^x_t)\cdot (\partial^2_x \bar{X}^x_t\cdot ( l_1, l_3), \partial^2_x \bar{X}^x_t\cdot (l_2,l_4))\right]\\
			&&\quad\quad\quad\quad\quad\quad\quad\quad\quad\quad+\EE\left[D^3\phi(\bar{X}^x_t)\cdot (\partial_x \bar{X}^x_t\cdot l_1, \partial^2_x \bar{X}^x_t\cdot ( l_2,l_3), \partial_x \bar{X}^x_t\cdot l_4)\right]\\
			&&\quad\quad\quad\quad\quad\quad\quad\quad\quad\quad+\EE\left[D^2\phi(\bar{X}^x_t)\cdot (\partial^2_x \bar{X}^x_t\cdot (l_1,l_4), \partial^2_x \bar{X}^x_t\cdot ( l_2,l_3))\right]\\
			&&\quad\quad\quad\quad\quad\quad\quad\quad\quad\quad+\EE\left[D^2\phi(\bar{X}^x_t)\cdot (\partial_x \bar{X}^x_t\cdot l_1, \partial^3_x \bar{X}^x_t\cdot ( l_2,l_3,l_4))\right]\\
			&&\quad\quad\quad\quad\quad\quad\quad\quad\quad\quad+\EE\left[D^3\phi(\bar{X}^x_t)\cdot  (\partial^2_{x} \bar{X}^x_t\cdot (l_1,l_2), \partial_x \bar{X}^x_t\cdot l_3, \partial_x \bar{X}^x_t\cdot l_4)\right]\\
			&&\quad\quad\quad\quad\quad\quad\quad\quad\quad\quad+\EE\left[D^2\phi(\bar{X}^x_t)\cdot  (\partial^3_{x} \bar{X}^x_t\cdot (l_1,l_2,l_4), \partial_x \bar{X}^x_t\cdot l_3)\right]\\
			&&\quad\quad\quad\quad\quad\quad\quad\quad\quad\quad+\EE\left[D^2\phi(\bar{X}^x_t)\cdot  (\partial^2_{x} \bar{X}^x_t\cdot (l_1,l_2), \partial^2_x \bar{X}^x_t\cdot (l_3,l_4))\right]\\
			&&\quad\quad\quad\quad\quad\quad\quad\quad\quad\quad+\EE\left[D^2\phi(\bar{X}^x_t)\cdot (\partial^3_x \bar{X}^x_t\cdot (l_1,l_2,l_3), \partial_x \bar{X}^x_t\cdot l_4)\right]\\
			&&\quad\quad\quad\quad\quad\quad\quad\quad\quad\quad+\EE\left[D\phi(\bar{X}^x_t)\cdot (\partial^4_x \bar{X}^x_t\cdot (l_1,l_2,l_3,l_4))\right],
		\end{eqnarray*}
		where $\partial_x \bar{X}^{x}_t\cdot l$ is directional derivative of $\bar{X}^{x}_t$ with respect to $x$ in the direction $l$, which satisfies
		\begin{equation*}\left\{\begin{array}{l}
				\displaystyle
				d[\partial_x \bar{X}^{x}_t\cdot l]=D\bar{b}(\bar{X}^{x}_t)\cdot (\partial_x \bar{X}^{x}_t\cdot l)dt+D\bar{\sigma}(\bar{X}^{x}_t)\cdot (\partial_x \bar{X}^{x}_t\cdot l)d W_t\nonumber\\
				\quad\quad\quad\quad\quad\quad+\int_{\mathcal{Z}_1}\partial_x h_1(\bar{X}^{x}_t,z)\cdot (\partial_x \bar{X}^{x}_t\cdot l)\tilde{N}^1(dt,dz),\nonumber\\
				\partial_x \bar{X}^{x}_0\cdot l=l,\nonumber
			\end{array}\right.
		\end{equation*}
		and the other notations can be interpreted similarly.
		
		Note that $\bar{b}\in C^{4}_{p}(\RR^n,\RR^n)$ , $\bar{\sigma}\in C^4_b(\RR^n,\RR^n\times\RR^n)$, using a similar argument as in the proof of Proposition \ref{DFY} in the appendix, we can  prove the differentiability of $\bar{X}^{x}_t$ with respective to $x$ up to fourth derivative. Moreover, for any $T>0$, there exist $C_T, \tilde{k}>0$ such that
		\begin{eqnarray}
			&&\sup_{t\in [0,T]}\EE|\partial_x \bar{X}^{x}_t\cdot l_1|^{16}\leq C_T,\label{D barX}\\
			&&\sup_{t\in [0,T]}\EE|\partial^2_x \bar{X}^{x}_t\cdot (l_1,l_2)|^{4}\leq C_T(1+|x|^{\tilde{k}}),\label{Dxx barX}\\
			&&\sup_{t\in [0,T]}\EE|\partial^3_x \bar{X}^{x}_t\cdot (l_1,l_2,l_3)|^{2}\leq C_T(1+|x|^{\tilde{k}}),\label{Dxxx barX}\\
			&&\sup_{t\in [0,T]}\EE|\partial^4_x \bar{X}^{x}_t\cdot (l_1,l_2,l_3,l_4)|^2\leq C_T(1+|x|^{\tilde{k}}).\label{Dxxxx barX}
		\end{eqnarray}
		Using $\phi\in C^{4}_p(\RR^n)$ and applying H\"{o}lder inequality, \eref{UE3}-\eref{UE6} can be easily obtained from \eref{D barX}-\eref{Dxxxx barX}.
		
		Since the proofs of \eref{UE1} and \eref{UE2} follow the same argument, we only give the proof of \eref{UE1}. By It\^{o}'s formula and taking expectation, we have any unit vector $l\in\RR^n$,
		\begin{eqnarray*}
			\EE[D\phi(\bar{X}^{x}_t)\cdot (\partial_x \bar{X}^x_t\cdot l)]=\!\!\!\!\!\!\!\!&&D\phi(x)\cdot l+\int^t_0 \EE\Big[ D^2\phi(\bar{X}^{x}_s)\cdot \left (\partial_x \bar{X}^x_s\cdot l, \bar{b}(\bar{X}^x_s)\right)\Big]ds\\
			&&+ \int^t_0\EE\Big[D\phi(\bar{X}^{x}_s)\cdot\left(D\bar{b}(\bar{X}^{x}_s)\cdot (\partial_x \bar{X}^x_s\cdot l)\right)\Big] ds\\
			&&+\frac{1}{2}\int^t_0 \EE\text{Tr}\Big[D^3\phi(\bar{X}^{x}_s)\cdot (\partial_x \bar{X}^x_s\cdot l, (\bar{\sigma}\bar{\sigma})(\bar{X}^{x}_s)) \Big]ds\\
			&&+\int^t_0 \EE\text{Tr}\Big[D^2\phi(\bar{X}^{x}_s)\cdot\big (\bar{\sigma}(\bar{X}^{x}_s)\cdot[D\bar{\sigma}(\bar{X}^{x}_s)\cdot (\partial_x \bar{X}^x_s\cdot l)]^{\ast}\big) \Big]ds\\
			&&+\int^t_0\EE\int_{\mathcal{Z}_1}\Big[D\phi(\bar{X}^{x}_s+h_1(\bar{X}^{x}_s,z))\cdot [\partial_x \bar{X}^x_s\cdot l+\partial_xh_1(\bar{X}^{x}_s,z)\cdot (\partial_x \bar{X}^x_s\cdot l)]\\
			&&\quad\quad\quad\quad\quad-D\phi(\bar{X}^{x}_s)\cdot (\partial_x \bar{X}^x_s\cdot l)-D^2\phi(\bar{X}^x_s)\cdot (\partial_x \bar{X}^x_s\cdot l, h_1(\bar{X}^{x}_s,z))\\
			&&\quad\quad\quad\quad\quad-D\phi(\bar{X}^{x}_s)\cdot (\partial_x h_1(\bar{X}^{x}_s,z)\cdot  (\partial_x \bar{X}^x_s\cdot l) )\Big]\nu_1(dz)ds.
		\end{eqnarray*}
		Note that
		$$\partial_t(\partial_x u(t,x))\cdot l=\partial_t\EE[D\phi(\bar{X}^{x}_t)\cdot (\partial_x \bar{X}^x_t\cdot l)].$$
		By  $\phi\in C^{4}_p(\RR^n)$ and  \eref{D barX}-\eref{Dxxxx barX}, for any $t\leq T$, there exists $k_1>0$ such that
		\begin{eqnarray*}
			|\partial_t(\partial_x u(t,x))\cdot l|\leq C_T\left (1+|x|^{k_1}\right).
		\end{eqnarray*}
		Thus \eref{UE1} holds. \eref{UE2} can be prove by a similar argument, thus we omit its proof. The proof is complete.
	\end{proof}
	
	\vspace{0.2cm}
	Now we are in a position to prove Theorem \ref{main result 2}
	
	\vspace{0.1cm}
	
	\noindent
	Proof of Theorem \ref{main result 2}.
	For fixed $t>0$, let $\tilde{u}^t(s,x):=u(t-s,x)$, $s\in [0,t]$, by It\^{o}'s formula, we have
	\begin{eqnarray*}
		\tilde{u}^t(t, X^{\vare}_t)=\!\!\!\!\!\!\!\!&&\tilde{u}^t(0,x)+\int^t_0 \partial_s \tilde{u}^t(s, X^{\vare}_s )ds+\int^t_0 \mathscr{L}_{1}(Y^{\vare}_s)\tilde{u}^t(s, \cdot)(X^{\vare}_s) ds+\tilde{M}_t^{1}+\tilde{M}_t^{2},
	\end{eqnarray*}
	where
	\begin{eqnarray*}
		&&\mathscr{L}_{1}(y)\phi(x):=\langle b(x,y), D  \phi(x)\rangle+\frac{1}{2}\text{Tr}[\sigma\sigma^{*}(x,y)D^{2}\phi(x)]\\
		&&\quad\quad\quad\quad+\int_{\mathcal{Z}_1}\left[\phi(x+h_{1}(x,z))\!-\!\phi(x)\!-\!\langle D  \phi(x),h_{1}(x,z)\rangle\right] \nu_1(dz),
	\end{eqnarray*}
	$\tilde{M}_t^{1}$ and$\tilde{M}_t^{2}$  are $\mathscr{F}_{t}$-martingales which are defined as follows:
	\begin{eqnarray*}
		&&\tilde{M}_t^{1}:=\int^t_0 \langle \partial_{x}\tilde{u}^t(s,X_{s}^{\vare}),\sigma(X_{s}^{\vare}, Y_{s}^{\vare})dW_{s}^{1} \rangle	 ,  \\
		&&\tilde{M}_t^{2}:=\int^t_0 \int_{\mathcal{Z}_1}\tilde{u}^t(s,X_{s-}^{\vare}+h_{1}(X_{s-}^{\vare},z))-\tilde{u}^t(s,X_{s-}^{\vare})\tilde{N}^1(dz,ds).
	\end{eqnarray*}

	Note that $\tilde{u}^t(t, X^{\vare}_t)=\phi(X^{\vare}_t)$, $\tilde{u}^t(0, x)=\EE\phi(\bar{X}^{x}_t)$ and $\partial_s \tilde{u}^t(s, X^{\vare}_s )=-\bar{\mathscr{L}}_1 \tilde{u}^t(s,\cdot)(X^{\vare}_s)$, we have
	\begin{eqnarray}
		\left|\EE\phi(X^{\vare}_{t})-\EE\phi(\bar{X}_{t})\right| =\!\!\!\!\!\!\!\!&&|\EE\int^t_0 -\bar{\mathscr{L}}_1 \tilde{u}^t(s, \cdot)(X^{\vare}_s )ds+\EE\int^t_0 \mathscr{L}_{1}(Y^{\vare}_s)\tilde{u}^t(s, \cdot)(X^{\vare}_s)ds|\nonumber\\
		=\!\!\!\!\!\!\!\!&&\big|\EE\!\int^t_0 \!\langle b(X^{\vare}_s,Y^{\vare}_s)\!-\!\bar{b}(X^{\vare}_s),\partial_x \tilde{u}^t(s, X^{\vare}_s )\rangle\nonumber\\
		&&\quad\quad\quad\quad +\frac{1}{2}\text{Tr}\big[(\sigma\sigma^{\ast}(X^{\vare}_s,Y^{\vare}_s)\!-\!\bar{\sigma}\bar{\sigma}(X^{\vare}_s))\partial_x^{2}\tilde{u}^t(s, X^{\vare}_s)\big]  ds\big| .  \label{F5.11}
	\end{eqnarray}
	
	For any $s\in [0,t], x\in\RR^{n},y\in\RR^{m}$, define
	\begin{eqnarray*}
		F^t(s,x,y):=\langle b(x,y), \partial_x \tilde{u}^t(s, x)\rangle+\frac{1}{2}\text{Tr}\big[\sigma\sigma^{*}(x,y)\partial_x^{2}\tilde{u}^t(s,x)\big].
	\end{eqnarray*}
	Thus it is easy to see
	\begin{eqnarray*}
		\bar{F}^t(s,x)\!\!\!\!\!\!\!&&:=\int_{\RR^{m}} F^t(s,x,y)\mu^x(dy)  \\
		&&=\langle \bar{b}(x), \partial_x \tilde{u}^t(s, x)\rangle +\frac{1}{2}\text{Tr}\big[\bar{\sigma}\bar{\sigma}(x)\partial_x^{2}\tilde{u}^t(s, x)\big].
	\end{eqnarray*}
	
	By Lemma \ref{Lemma 5.1} and $b\in C^{2,3}_p(\RR^n\times\RR^m,\RR^n)$, we can obtain that $\partial^j_y\partial^i_xF^t(s,x,y)$ and $\partial_sF^t(s,x,y)$ exist and are polynomial growth, for any $i=0,1,2$, $j=0,1,2,3$ with $0\leq i+j\leq 3$. Then following the argument as in the proof of Proposition \ref{P4.1}, we have
	$$
	\tilde{\Phi}^t(s, x,y):=\int^{\infty}_0\left[\EE F^t(s,x,Y^{x,y}_r)-\bar{F}^t(s,x)\right] dr,
	$$
	is a solution of the following Poisson equation:
	\begin{eqnarray}
		-\mathscr{L}_{2}(x)\tilde{\Phi}^t(s,x,\cdot)(y)=F^t(s,x,y)-\bar{F}^t(s,x),\quad s\in [0,t].\label{WPE}
	\end{eqnarray}
	Moreover, for any $T>0$, $t\in [0,T]$, there exist $C_T, k_2>0$ such that the following estimates hold:
	\begin{eqnarray}
		&&\sup_{s\in [0, t]}|\tilde{\Phi}^t(s,x,y)|\leq C_T(1+|x|^{k_2}+|y|^{k_2}), \label{WR1} \\
		&&\sup_{s\in [0, t]}|\partial_s \tilde{\Phi}^t(s,x,y)|\leq C_T(1+|x|^{k_2}+|y|^{k_2}), \label{WR2} \\
		&&\sup_{s\in [0, t]}|\partial_x \tilde{\Phi}^t(s,x,y)|\leq C_{T}(1+|x|^{k_2}+|y|^{k_2}), \label{WR3}\\
		&&\sup_{s\in[0,t]}\| \partial_x^{2}\tilde{\Phi}^t(s,x, y)\|\leq C_{T}(1+|x|^{k_2}+|y|^{k_2}). \label{WR4}
	\end{eqnarray}
	
	Using It\^o's formula and taking expectation on both sides, we get
	\begin{eqnarray*}
		\EE\tilde{\Phi}^t(t, X_{t}^{\vare},Y^{\vare}_{t})\!\!\!\!\!\!\!\!&&=\tilde \Phi^t(0, x,y)+\EE\int^t_0 \partial_s \tilde{\Phi}^t(s, X_{s}^{\vare},Y^{\vare}_{s})ds\\
		&&	+\EE\int^t_0\mathscr{L}_{1}(Y^{\vare}_{s})\tilde\Phi^t(s, \cdot,Y^{\vare}_{s})(X_{s}^{\vare})ds
		+\frac{1}{\vare}\EE\int^t_0 \mathscr{L}_{2}(X_{s}^{\vare})\tilde{\Phi}^t(s, X_{s}^{\vare},\cdot)(Y^{\vare}_{s})ds,
	\end{eqnarray*}
	which implies
	\begin{eqnarray}
		&&-\EE\int^t_0 \mathscr{L}_{2}(X_{s}^{\vare})\tilde\Phi^t(s, X_{s}^{\vare},\cdot)(Y^{\vare}_{s})ds \nonumber \\
		=\!\!\!\!\!\!\!\!&&\vare\big[\tilde{\Phi}^t(0, x,y)-\EE\tilde{\Phi}^t(t, X_{t}^{\vare},Y^{\vare}_{t})+\EE\int^t_0 \partial_s \tilde{\Phi}^t(s, X_{s}^{\vare},\cdot)(Y^{\vare}_{s})ds\nonumber\\
		&&+\EE\int^t_0\mathscr{L}_{1}(Y^{\vare}_{s})\tilde{\Phi}^t(s, \cdot,Y^{\vare}_{s})(X_{s}^{\vare})ds\big].\label{F3.39}
	\end{eqnarray}
	
	Combining  \eref{F5.11}, \eref{WPE} and \eref{F3.39}, we get
	\begin{eqnarray*}
		\sup_{0\leq t\leq T}\left|\EE\phi(X^{\vare}_{t})-\EE\phi(\bar{X}_{t})\right|
=\!\!\!\!\!\!\!\!&&\sup_{0\leq t\leq T}\left|\EE\int^t_0 \left[F^t(s,X_{s}^{\vare},Y^{\vare}_{s})- \bar{F}^t(s,X_{s}^{\vare})\right]ds\right|\\
=\!\!\!\!\!\!\!\!&&\sup_{0\leq t\leq T}\left|\EE\int^t_0 \mathscr{L}_{2}(X_{s}^{\vare})\tilde{\Phi}^t(s, X_{s}^{\vare},\cdot)(Y^{\vare}_{s})ds\right|\\
		\leq\!\!\!\!\!\!\!\!&&\vare\Big[\sup_{t\in [0,T]}|\tilde{\Phi}^t(0, x,y)|+\sup_{0\leq t\leq T}\left|\EE\tilde{\Phi}^t(t, X_{t}^{\vare},Y^{\vare}_{t})\right|\nonumber\\
		&&+\sup_{t\in [0,T]}\EE\int^t_0\left|\partial_s \tilde{\Phi}^t(s, X_{s}^{\vare},Y^{\vare}_{s})\right|ds.\\
		&& +\sup_{t\in [0,T]}\EE\int^t_0\left|\mathscr{L}_{1}(Y^{\vare}_{s})\tilde{\Phi}^t(s, \cdot,Y^{\vare}_{s})(X_{s}^{\vare})\right|ds\Big].
	\end{eqnarray*}
	Finally, using \eref{WR1}-\eref{WR4}, we obtain for some $k_2>0$,
	\begin{eqnarray*}
		\sup_{0\leq t\leq T}\left|\EE\phi(X^{\vare}_{t})-\EE\phi(\bar{X}_{t})\right|\leq C_{T}\left(1+|x|^{k_2}+|y|^{k_2}\right)\vare.
	\end{eqnarray*}
	The proof is complete.

\section{Appendix}	

\subsection{Differentiability of $Y^{x,y}$} In this subsection, we give the proofs of the differentiability of the solution $Y^{x,y}_t$ of the frozen equation with respect to parameter $x$ and $y$.
\begin{proposition}\label{DFY}
Under the assumptions \ref{B1}-\ref{B3}. Then $Y^{x,y}_t$ is differentiable with respect to $y$ and $x$ in  directions $l\in\RR^m$ and $l_1\in \RR^n$ respectively, which satisfies
		\begin{eqnarray}
			d[\partial _yY_{t}^{x,y}\cdot l]=\!\!\!\!\!\!\!\!\!&&\partial_yf(x,Y_{t}^{x,y})\cdot(\partial_yY_{t}^{x,y}\cdot l)dt+\partial_yg(x,Y_{t}^{x,y})\cdot (\partial_yY_{t}^{x,y}\cdot l)d\tilde{W}_{t}^{2} \nonumber\\
			&&+\int_{\mathcal{Z}_2}\partial_yh_2(x,Y_{t-}^{x,y},z) \cdot (\partial_yY_{t-}^{x,y}\cdot l)\tilde{N}^{2}(dz,dt).\label{Apartial y}
		\end{eqnarray}
and
\begin{eqnarray}
			d\left[\partial _xY_{t}^{x,y}\cdot l_1\right]=\!\!\!\!\!\!\!\!\!&&\left[\partial_x f(x,Y_{t}^{x,y})\cdot l_1+\partial_yf(x,Y_{t}^{x,y})\cdot(\partial_xY_{t}^{x,y}\cdot l_1)\right]dt\nonumber\\
			&&+ \left[\partial_x g(x,Y_{t}^{x,y})\cdot l_1+\partial_yg(x,Y_{t}^{x,y})\cdot( \partial_xY_{t}^{x,y}\cdot l_1)\right]d \tilde{W}_{t}^{2} \nonumber\\
			&&+\int_{\mathcal{Z}_2}\left[\partial_x h_2(x,Y_{t-}^{x,y},z)\cdot l_1+\partial_y h_2(x,Y_{t-}^{x,y},z) \cdot (\partial_x Y_{t-}^{x,y}\cdot l_1)\right]\tilde{N}^{2}(dt,dz).\label{Apartial x}
		\end{eqnarray}
\end{proposition}
Moreover, there exist $C,\gamma>0$ such that
\begin{eqnarray}
\EE|\partial _yY_{t}^{x,y}\cdot l|^{\ell}\leq e^{-\gamma t}|l|^{\ell},\quad \sup_{t\geq 0}\mathbb{E}|\partial_x Y^{x,y}_t\cdot l_1|^{\ell} \leq C|l_1|^{\ell}, \label{EPXY}
\end{eqnarray}
where $\ell$ is the constant in assumption \ref{B1}.
\begin{proof}
We only prove $Y^{x,y}_t$ is differentiable with respect to $y$ in direction $l\in\RR^m$ and its directional derivative $\partial_yY^{x,y}_t\cdot l$ satisfies equation \eref{Apartial y}. Since $Y^{x,y}_t$ is differentiable with respect to $x$ in direction $l_1\in\RR^n$ and its directional derivative $\partial_xY^{x,y}_t\cdot l_1$ satisfies equation \eref{Apartial x} can be proved by a similar argument, thus we omit the details.

In fact, it is sufficient to prove the following result:
\begin{eqnarray}
\lim_{\delta\rightarrow 0}\mathbb{E}\left|\frac{Y^{x,y+\delta l}_t-Y^{x,y}_t}{\delta}-\partial_y Y^{x,y}_t\cdot l\right|^2=0.\label{Partialy Y}
\end{eqnarray}
To do this, denote $Z^{\delta,l}_t:=\frac{Y^{x,y+\delta l}_t-Y^{x,y}_t}{\delta}-\partial _yY_{t}^{x,y}\cdot l$, recall
	\begin{equation*}\left\{\begin{array}{l} \label{AFE}
			\displaystyle
			dY_{t}^{x,y}=f(x,Y_{t}^{x,y})dt+g(x,Y_{t}^{x,y})d \tilde{W}_{t}^{2}+\int_{\mathcal{Z}_2}h_{2}(x,Y_{t-}^{x,y},z)\tilde{N}^{2}(dz,dt),\nonumber\\
			Y_{0}^{x,y}=y,\nonumber\\
		\end{array}\right.
	\end{equation*}
then $Z^{\delta,l}_t$ satisfies the following equation:
	\begin{equation}\left\{\begin{array}{l} \label{AFE}
			\displaystyle
			dZ^{\delta,l}_t=\left[\frac{f(x,Y_{t}^{x,y+\delta l})-f(x,Y_{t}^{x,y})}{\delta}-\partial_yf(x,Y_{t}^{x,y})\cdot(\partial_yY_{t}^{x,y}\cdot l)\right]dt\nonumber\\
\quad\quad\quad+\left[\frac{g(x,Y_{t}^{x,y+\delta l})-g(x,Y_{t}^{x,y})}{\delta}-\partial_yg(x,Y_{t}^{x,y})\cdot(\partial_yY_{t}^{x,y}\cdot l)\right]d\tilde{W}_{t}^{2}\nonumber\\
\quad\quad\quad+\int_{\mathcal{Z}_2}\left[\frac{h_2(x,Y_{t-}^{x,y+\delta l},z)-h_2(x,Y_{t-}^{x,y},z)}{\delta}-\partial_yh_2(x,Y_{t-}^{x,y},z)\cdot(\partial_yY_{t-}^{x,y}\cdot l)\right]\tilde{N}^{2}(dz,dt),\nonumber\\
			Z^{\delta,l}_0=0\nonumber\\
		\end{array}\right.
	\end{equation}
By It\^{o}'s formula and taking expectation, we have
	\begin{eqnarray}
			\frac{d}{dt}\EE|Z^{\delta,l}_t|^2=\!\!\!\!\!\!&&2\EE\left\langle \left[\frac{f(x,Y_{t}^{x,y+\delta l})-f(x,Y_{t}^{x,y})}{\delta}-\partial_yf(x,Y_{t}^{x,y})\cdot(\partial_yY_{t}^{x,y}\cdot l)\right], Z^{\delta,l}_t\right\rangle \nonumber\\
&&+\EE\left\|\frac{g(x,Y_{t}^{x,y+\delta l})-g(x,Y_{t}^{x,y})}{\delta}-\partial_yg(x,Y_{t}^{x,y})\cdot(\partial_yY_{t}^{x,y}\cdot l)\right\|^2\nonumber\\
&&+\EE\int_{\mathcal{Z}_2}\left|\frac{h_2(x,Y_{t}^{x,y+\delta l},z)-h_2(x,Y_{t}^{x,y},z)}{\delta}-\partial_yh_2(x,Y_{t}^{x,y},z)\cdot(\partial_yY_{t}^{x,y}\cdot l)\right|^2\nu_2(dz)\nonumber\\
=:\!\!\!\!\!\!&&\sum^{3}_{i=1}\tilde{I}_i(t). \label{I_1-I_3}
	\end{eqnarray}
For the term $\tilde{I}_1(t)$. By Taylor's formula, there exists $\xi\in(0,1)$ such that
\begin{eqnarray}
\tilde{I}_1(t)=\!\!\!\!\!\!&&2\EE\left\langle \left[\frac{f(x,Y_{t}^{x,y+\delta l})-f(x,Y_{t}^{x,y})}{\delta}-\partial_yf(x,Y_{t}^{x,y})\cdot\frac{Y^{x,y+\delta l}_t-Y^{x,y}_t}{\delta}\right], Z^{\delta,l}_t\right\rangle \nonumber\\
&&+2\EE\left\langle \partial_yf(x,Y_{t}^{x,y})\cdot Z^{\delta,l}_t, Z^{\delta,l}_t\right\rangle\nonumber\\
\leq \!\!\!\!\!\!&&\EE\delta^{-1} \|\partial^2_yf(x,\xi Y_{t}^{x,y+\delta l}\!\!+(1-\xi)Y_{t}^{x,y})\| |Y_{t}^{x,y+\delta l}-Y_{t}^{x,y}|^2| Z^{\delta,l}_t| \nonumber\\
&&+2\EE\left\langle \partial_yf(x,Y_{t}^{x,y})\cdot Z^{\delta,l}_t, Z^{\delta,l}_t\right\rangle.\label{I_1}
	\end{eqnarray}

For the terms $\tilde{I}_2(t)$ and $\tilde{I}_3(t)$. By a similar argument above, there exist $\xi_2,\xi_3\in(0,1)$ such that
\begin{eqnarray}
\tilde{I}_2(t)\leq\!\!\!\!\!\!&&2\EE\left\|\frac{g(x,Y_{t}^{x,y+\delta l})-g(x,Y_{t}^{x,y})}{\delta}-\partial_yg(x,Y_{t}^{x,y})\cdot \frac{Y^{x,y+\delta l}_t-Y^{x,y}_t}{\delta}\right\|^2\nonumber\\
&&+2\EE\left\|\partial_yg(x,Y_{t}^{x,y})\cdot Z^{\delta,l}_t\right\|^2\nonumber\\
\leq\!\!\!\!\!\!&&2^{-1}\EE\delta^{-2} \|\partial^2_yg(x,\xi_2 Y_{t}^{x,y+\delta l}\!\!+(1-\xi_2)Y_{t}^{x,y})\|^2 |Y_{t}^{x,y+\delta l}-Y_{t}^{x,y}|^4\nonumber\\
&&+2\EE\left\|\partial_yg(x,Y_{t}^{x,y})\cdot Z^{\delta,l}_t\right\|^2\label{I_2}
	\end{eqnarray}
and
\begin{eqnarray}
\tilde{I}_3(t)\leq\!\!\!\!\!\!&&2\EE\int_{\mathcal{Z}_2}\left|\frac{h_2(x,Y_{t}^{x,y+\delta l},z)-h_2(x,Y_{t}^{x,y},z)}{\delta}-\partial_yh_2(x,Y_{t}^{x,y},z)\cdot\frac{Y^{x,y+\delta l}_t-Y^{x,y}_t}{\delta}\right|^2\nu_2(dz)\nonumber\\
&&+2\EE\int_{\mathcal{Z}_2}\left|\partial_yh_2(x,Y_{t}^{x,y},z)\cdot Z^{\delta,l}_t\right|^2\nu_2(dz)\nonumber\\
\leq\!\!\!\!\!\!&&2^{-1}\EE\int_{\mathcal{Z}_2}\delta^{-2} \|\partial^2_yh_2(x,\xi_3 Y_{t}^{x,y+\delta l}\!\!+(1-\xi_3)Y_{t}^{x,y},z)\|^2 |Y_{t}^{x,y+\delta l}-Y_{t}^{x,y}|^4\nu_2(dz)\nonumber\\
&&+2\EE\int_{\mathcal{Z}_2}\left|\partial_yh_2(x,Y_{t}^{x,y},z)\cdot Z^{\delta,l}_t\right|^2\nu_2(dz).\label{I_3}
	\end{eqnarray}

Combining (\ref{I_1-I_3})-(\ref{I_3}), then by Young's inequality, assumption \ref{B3}, \eref{FroY} and Lemma \ref{L3.3}, there exists $k>0$ such that
\begin{eqnarray*}
\frac{d}{dt}\EE|Z^{\delta,l}_t|^2\leq \!\!\!\!\!\!&&\frac{\beta}{2}\EE|Z^{\delta,l}_t|^2+C(1+|x|^k+|y|^k)\delta^2\\
&&+2\EE\left\langle \partial_yf(x,Y_{t}^{x,y})\cdot Z^{\delta,l}_t, Z^{\delta,l}_t\right\rangle+2\EE\left\|\partial_yg(x,Y_{t}^{x,y})\cdot Z^{\delta,l}_t\right\|^2\\
&&+2\EE\int_{\mathcal{Z}_2}\left|\partial_yh_2(x,Y_{t}^{x,y},z)\cdot Z^{\delta,l}_t\right|^2\nu_2(dz).
	\end{eqnarray*}
Note that condition \eref{ConB1} implies that for any $x\in\RR^n,y\in\RR^m, l\in\RR^m$
\begin{eqnarray}
&&2\left \langle \partial_yf(x,y)\cdot l,l\right\rangle +(\ell-1)\|\partial_yg(x,y)\cdot l\|^{2}\nonumber\\
&&\quad +2^{\ell-3}(\ell-1)\!\int_{\mathcal{Z}_2}\!\!|\partial_yh_{2}(x,y,z)\cdot l|^{2}\nu_2(dz)\le -\beta|l|^{2},
	\label{ConDP}
\end{eqnarray}
where $\ell>8$, thus this implies that
\begin{eqnarray*}
&&2\left \langle \partial_yf(x,y)\cdot l,l\right\rangle +2\|\partial_yg(x,y)\cdot l\|^{2}\nonumber\\
&&\quad +2\int_{\mathcal{Z}_2}\!\!|\partial_yh_{2}(x,y,z)\cdot l|^{2}\nu_2(dz)\le -\beta|l|^{2}.
\end{eqnarray*}

Then we have
\begin{eqnarray*}
\frac{d}{dt}\EE|Z^{\delta,l}_t|^2\leq \!\!\!\!\!\!&&-\frac{\beta}{2}\EE|Z^{\delta,l}_t|^2+C(1+|x|^k+|y|^k)\delta^2.
	\end{eqnarray*}
By comparison theorem, we have
\begin{eqnarray*}
\EE|Z^{\delta,l}_t|^2\leq \!\!\!\!\!\!&&C_{\beta}(1+|x|^k+|y|^k)\delta^2\rightarrow 0,\quad \text{as} \quad \delta\rightarrow 0.
	\end{eqnarray*}

Using It\^{o}'s formula on $|\partial _yY_{t}^{x,y}\cdot l|^{\ell}$ and taking expectation on both sides, then by \eref{Taylor} again, we have
\begin{eqnarray*}
			\frac{d}{dt}\mathbb{E}|\partial _yY_{t}^{x,y}\cdot l|^{\ell}\leq\!\!\!\!\!\!\!\!\!\!&&{\ell}\mathbb{E}\left[\big|\partial _yY_{t}^{x,y}\cdot l\big|^{\ell-2}\left\langle \partial_yf(x,Y_{t}^{x,y})\cdot(\partial_yY_{t}^{x,y}\cdot l), \partial _yY_{t}^{x,y}\cdot l\right\rangle\right]\\
			&& +\frac{{\ell}({\ell}-1)}{2}\mathbb{E}\left[\big|\partial _yY_{t}^{x,y}\cdot l\big|^{\ell-2}\left \|\partial_yg(x,Y_{t}^{x,y})\cdot (\partial_yY_{t}^{x,y}\cdot l)\right\|^2\right]\\
			&& +2^{\ell-4}\ell(\ell-1)\mathbb{E}\Big[|\partial _yY_{t}^{x,y}\cdot l|^{\ell-2} \int_{\mathcal{Z}_2}|\partial_yh_2(x,Y_{t-}^{x,y},z) \cdot (\partial_yY_{t-}^{x,y}\cdot l)|^2\nu_2(dz)\Big]\\
			&&+2^{\ell-4}\ell(\ell-1)\mathbb{E} \int_{\mathcal{Z}_2}|\partial_yh_2(x,Y_{t-}^{x,y},z) \cdot (\partial_yY_{t-}^{x,y}\cdot l)|^{\ell}\nu_2(dz).
		\end{eqnarray*}
Note that condition \eref{ConB11} implies that for any $x\in\RR^n,y\in\RR^m, l\in\RR^m$
\begin{eqnarray}
2^{\ell-3}(\ell-1)\!\int_{\mathcal{Z}_2}\!\!|\partial_yh_{2}(x,y,z)\cdot l|^{\ell}\nu_2(dz)\le L_{h_2}|l|^{\ell}.
	\label{ConDP2}
\end{eqnarray}
By \eref{ConDP} and \eref{ConDP2}, we have
\begin{eqnarray*}
			\frac{d}{dt}\mathbb{E}|\partial _yY_{t}^{x,y}\cdot l|^{\ell}
			\leq\!\!\!\!\!\!\!\!\!\!&&-\frac{\ell\beta}{2}\mathbb{E}\left[\big|\partial _yY_{t}^{x,y}\cdot l\big|^{\ell}\right]+\frac{\ell L_{h_2}}{2}\EE\big|\partial _yY_{t}^{x,y}\cdot l\big|^{\ell}\\
			=\!\!\!\!\!\!\!\!\!\!&&-\frac{\ell(\beta-L_{h_2})}{2}\mathbb{E}\big|\partial _yY_{t}^{x,y}\cdot l\big|^{\ell}.
		\end{eqnarray*}
Then by the comparison theorem, we get
\begin{eqnarray*}
\EE|\partial _yY_{t}^{x,y}\cdot l|^{\ell}\leq e^{-\frac{\ell(\beta-L_{h_2})t}{2}}|l|^{\ell}.
\end{eqnarray*}
Thus the first estimate in \eref{EPXY} holds. By a similar argument, we can prove the second estimate in \eref{EPXY}. The proof is complete.
\end{proof}
\begin{remark}\label{R6.2}
It is worth noting that by a similar argument to that above, using the additional regularity assumptions \eref{ConB31} in \ref{B3} on the coefficients, we can further prove the differentiability of $\partial _yY_{t}^{x,y}\cdot l$ and $\partial _xY_{t}^{x,y}\cdot l_1$ with respect to parameters. Let $\partial_y\partial_x Y^{x,y}_t\cdot (l_1,l_2)$ be the directional derivative of $\partial_x Y^{x,y}_t\cdot l_1$ with respect to $y$ in the direction $l_2$. Let $\partial^2_x Y^{x,y}_t\cdot (l_1,l_2)$ be the directional derivative of $\partial_x Y^{x,y}_t\cdot l_1$ with respect to $x$ in the direction $l_2$. Let $\partial_y\partial_{x}^{2}Y_{t}^{x,y}\cdot (l_1,l_2,l_3)$ be the directional derivative of $\partial^2_x Y^{x,y}_t\cdot (l_1,l_2)$ with respect to $y$ in the direction $l_3$. We can easily prove for any unit vectors $l_1,l_2,l_3$,
\begin{eqnarray*}
&&\EE|\partial_y\partial_{x}Y^{x,y}_t\cdot (l_1,l_2)|^4\leq Ce^{-4\gamma t}(1+|y|^{4k}),\\
&&\sup_{t\geq 0}\EE|\partial^2_{x}Y^{x,y}_t\cdot (l_1,l_2)|^4\leq C(1+|y|^{4k}),\\
&&\EE|\partial_y\partial^2_{x}Y^{x,y}_t\cdot (l_1,l_2,l_3)|^2\leq Ce^{-2\gamma t}(1+|y|^{4k}),
\end{eqnarray*}
where $C,k,\gamma>0$.
\end{remark}

\subsection{Well-posedness of equation \eref{KE}} In this subsection, we give the detailed proof of the existence and uniqueness of equation \eref{KE}.
\begin{proposition}\label{EUE}
Under the assumptions \ref{A1}-\ref{A3} and \ref{B1}-\ref{B3}. For any $\phi\in C^{2}_p(\RR^{n})$, the following Kolmogorov equation
\begin{equation}\left\{\begin{array}{l}\label{AKE}
			\displaystyle
			\partial_t u(t,x)=\bar{\mathscr{L}}_1 u(t,x),\quad t\geq 0,x\in\RR^n, \\
			u(0, x)=\phi(x),
		\end{array}\right.
	\end{equation}
admits a unique solution $u\in C^{1,2}(\RR_{+}\times \RR^n)$, moreover the solution $u$ is given by
	$$
	u(t,x)=\EE\phi(\bar{X}^x_t),
	$$
where $\bar{\mathscr{L}}_1$ is the infinitesimal generator of the transition semigroup of the averaged equation \eref{AR2}, which is given by
	\begin{eqnarray*}
		&&\bar{\mathscr{L}}_1\phi(x):=\langle \bar{b}(x), D  \phi(x)\rangle+\frac{1}{2}\text{Tr}[\bar{\sigma}\bar{\sigma}(x)D^{2}\phi(x)]\\
		&&\quad\quad\quad\quad\quad\quad+\int_{\mathcal{Z}_1}\left[\phi(x+h_{1}(x,z))\!-\!\phi(x)\!-\!\langle D  \phi(x),h_{1}(x,z)\rangle \right]\nu_1(dz).
	\end{eqnarray*}
\end{proposition}
\begin{proof}
\textbf{Existence:} Using It\^{o}'s formula, it is easy to see $u(t,x)=\EE\phi(\bar{X}^x_t)$ is differentiable with respect to $t$. Moreover, using the chain rule and $\phi\in C^{2}_p(\RR^{n})$, it is easy to see $u(t,x)$ is first and second differentiable with respect to $x$. Hence $u\in C^{1,2}(\RR_{+}\times \RR^n)$. In order to prove $u(t,x)$ solves equation \eref{AKE}, we use the definition of generator $\bar{\mathscr{L}}_1$, more precisely, by the Markov property and homogeneous property, we have for any $s>0$,
\begin{eqnarray*}
\frac{\EE u(t,\bar{X}^x_s)-u(t,x)}{s}=\!\!\!\!\!\!&&\frac{\EE\left[\EE\phi(\bar{X}^{y}_{t})|_{y=\bar{X}^x_s}\right]-\EE\phi(\bar{X}^x_t)}{s}\\
=\!\!\!\!\!\!&&\frac{\EE\left[\EE\phi(\bar{X}^x_{t+s})| \mathscr{F}_{s}\right]-\EE\phi(\bar{X}^x_t)}{s}\\
=\!\!\!\!\!\!&&\frac{\EE\phi(\bar{X}^x_{t+s})-\EE\phi(\bar{X}^x_t)}{s}\\
=\!\!\!\!\!\!&&\frac{u(t+s,x)-u(t,x)}{s}.
\end{eqnarray*}
Then letting $s\rightarrow 0$, we get \eref{AKE}.

\textbf{Uniqueness:} Let $w(t,x)\in C^{1,2}(\RR_{+}\times \RR^n)$ be another solution of  \eref{AKE} with $w(0,x)=\phi(x)$. For any fixed $t>0$, define
$$\tilde{w}(s,x)=w(t-s,x), s\in[ 0,t],$$
then it is easy to check
$$
\partial_s \tilde{w}(s,x)+\bar{\mathscr{L}}_1 \tilde{w}(s,x)=0,\quad \forall s>0, x\in \RR^n.
$$
Then using It\^{o}'s formula on $\tilde{w}(t, \bar{X}^{x}_t)$ and taking expectation, we have
\begin{eqnarray*}
\EE\tilde{w}(t, \bar{X}^{x}_t) =\!\!\!\!\!\!&&\tilde{w}(0, x)+\int^t_0 \left[\partial_s \tilde{w}(s,\bar{X}^{x}_s)+\bar{\mathscr{L}}_1 \tilde{w}(s,\bar{X}^{x}_s)\right]ds=\tilde{w}(0, x).
\end{eqnarray*}
Note that by the definition of $\tilde{w}$, it follows
$$\EE\tilde{w}(t, \bar{X}^{x}_t)=\EE\phi(\bar{X}^{x}_t),\quad\tilde{w}(0, x)=w(t,x).$$
Hence, we obtain $w(t,x)=u(t,x)$.
\end{proof}

\vspace{0.3cm}
\textbf{Acknowledgment}. The authors would like to thank the anonymous referee for their very careful reading of the manuscript and especially for their very valuable suggestions and comments on
improving the manuscript.
	
	\vspace{0.3cm}
	\textbf{Funding} This work is supported by the National Natural Science Foundation of China (Nos, 12271219, 11931004, 12090010, 12090011), the QingLan Project of Jiangsu Province and the Priority Academic Program Development of Jiangsu Higher Education Institutions.
	
	\vspace{0.3cm}
	\textbf{Data Availability}
	Data sharing not applicable to this article as no data sets were generated or analysed during
	the current study.

	\vspace{0.3cm}
	\textbf{Declarations}
	
	\vspace{0.3cm}
	\textbf{Conflict of Interests}
	The authors declare that they have no conflict of interests.

\end{document}